\numberwithin{equation}{section}
\theoremstyle{plain}
\newtheorem{theorem}{Theorem}[section]
\newtheorem{lemma}[theorem]{Lemma}
\newtheorem{proposition}[theorem]{Proposition}
\newtheorem{conjecture}[theorem]{Conjecture}
\theoremstyle{definition}
\newtheorem{definition}[theorem]{Definition}
\theoremstyle{remark}
\newtheorem{remark}[theorem]{Remark}
\DeclareMathOperator{\G}{Gr}
\DeclareMathOperator{\IG}{IGr}
\DeclareMathOperator{\Fl}{Fl}
\DeclareMathOperator\HH{H}
\DeclareMathOperator{\Ext}{Ext}
\DeclareMathOperator{\Hom}{Hom}
\DeclareMathOperator{\Cone}{Cone}
\DeclareMathOperator{\Spin}{Spin}
\DeclareMathOperator\Sym{S}
\newcommand{\bL}{{\mathbb L}}
\newcommand{\bS}{{\mathbb S}}
\newcommand{\bP}{{\mathbb P}}
\newcommand{\bZ}{{\mathbb Z}}
\newcommand{\rA}{\mathrm{A}}
\newcommand{\rB}{\mathrm{B}}
\newcommand{\rC}{\mathrm{C}}
\newcommand{\rD}{\mathrm{D}}
\newcommand{\rE}{\mathrm{E}}
\newcommand{\rF}{\mathrm{F}}
\newcommand{\rG}{\mathrm{G}}
\newcommand{\rK}{\mathrm{K}}
\newcommand{\rL}{\mathrm{L}}
\newcommand{\rP}{\mathrm{P}}
\newcommand{\rS}{\mathrm{S}}
\newcommand{\cA}{\mathcal{A}}
\newcommand{\cB}{\mathcal{B}}
\newcommand{\cC}{\mathcal{C}}
\newcommand{\cD}{\mathcal{D}}
\newcommand{\cE}{\mathcal{E}}
\newcommand{\cF}{\mathcal{F}}
\newcommand{\cG}{\mathcal{G}}
\newcommand{\cH}{\mathcal{H}}
\newcommand{\cO}{\mathcal{O}}
\newcommand{\cR}{\mathcal{R}}
\newcommand{\cU}{\mathcal{U}}
\newcommand{\Db}{{\mathbf D^{\mathrm{b}}}}
\newcommand{\bq}{\mathbf{q}}
\newcommand{\bh}{\mathbf{h}}
\DeclareMathOperator{\Ca}{\mathbf{C}}
\newcommand{\brG}{\bar{\rG}}
\newcommand{\brL}{\bar{\rL}}
\newcommand{\brP}{\bar{\rP}}
\newcommand{\bcA}{\bar{\cA}}
\newcommand{\bcC}{\bar{\cC}}
\newcommand{\bcE}{\bar{\cE}}
\newcommand{\bcR}{\bar{\cR}}
\newcommand{\bcU}{\bar{\cU}}
\newcommand{\bom}{\bar\omega}
\begin{document}

\title[Derived categories of $\mathrm{E}_6/\mathrm{P}_1$ and $\mathrm{F}_4/\mathrm{P}_4$]{Derived categories of the Cayley plane\\[1ex] and the coadjoint Grassmannian of type~F}

% \authors{
% Pieter Belmans\thanks{Partially supported by a postdoctoral fellowship from the Research Foundation-Flanders (FWO).}
% \address
% Institut f\"ur Mathematik,
% Universit\"at Bonn,
% Endenicher Allee~60,
% 53115 Bonn,
% Germany
% \\[3pt]
% Departement Wiskunde,
% Universiteit Antwerpen,
% Middelheimlaan~1,
% 2020 Antwerpen,
% Belgium
% \email pbelmans@math.uni-bonn.de
% \and Alexander Kuznetsov\thanks{Partially supported by the HSE University Basic Research Program, Russian Academic Excellence Project ``5-100''.}
% \address
% Algebraic Geometry Section, Steklov Mathematical Institute\\of Russian Academy of Sciences,\\
% 8 Gubkin str., Moscow 119991 Russia\\[3pt]
% Laboratory of Algebraic Geometry, NRU HSE, Moscow, Russia\
% \email akuznet@mi-ras.ru
% \and Maxim Smirnov\thanks{Partially supported by the Deutsche Forschungsgemeinschaft (DFG, German Research Foundation) --- Projektnummer 448537907.
% }
% \address
% Institut f\"ur Mathematik,
% Universit\"at Augsburg,
% Universit\"atsstr.~14,
% 86159 Augsburg,
% Germany
% \email maxim.smirnov@math.uni-augsburg.de
% }

\author{Pieter Belmans}
\address{\parbox{0.95\textwidth}{
Institut f\"ur Mathematik,
Universit\"at Bonn,
Endenicher Allee~60,
53115 Bonn,
Germany
\\[3pt]
Departement Wiskunde
Universiteit Antwerpen,
Middelheimlaan~1,
2020 Antwerpen,
Belgium
\smallskip
}}
\email{pbelmans@math.uni-bonn.de}

\author{Alexander Kuznetsov}
\address{
\parbox{0.95\textwidth}{
Algebraic Geometry Section, Steklov Mathematical Institute of Russian Academy of Sciences,
8 Gubkin str., Moscow 119991 Russia\\[3pt]
Laboratory of Algebraic Geometry, NRU HSE, Moscow, Russia\
% \\[3pt]
% Interdisciplinary Scientific Center J.-V. Poncelet (CNRS UMI 2615), Moscow
% \\[3pt]
% Laboratory of Algebraic Geometry, National Research University Higher School of Economics, Moscow
% \bigskip
\smallskip
}
}
\email{akuznet@mi-ras.ru}

\author{Maxim Smirnov}
\address{
\parbox{0.95\textwidth}{
Universit\"at Augsburg,
Institut f\"ur Mathematik,
Universit\"atsstr.~14,
86159 Augsburg,
Germany
\smallskip
}}
\email{maxim.smirnov@math.uni-augsburg.de}

\thanks{P.B. was partially supported by a postdoctoral fellowship from the Research Foundation-Flanders (FWO).
A.K. was partially supported by the HSE University Basic Research Program and the Russian Academic Excellence Project ``5-100''. M.S. was partially supported by the Deutsche Forschungsgemeinschaft (DFG, German Research Foundation) --- Projektnummer 448537907.}

\begin{abstract}
For the derived category of the Cayley plane, which is the cominuscule Grassmannian of Dynkin type~$\mathrm{E}_6$,
a full Lefschetz exceptional collection was constructed by Faenzi and Manivel.
A general hyperplane section of the Cayley plane is the coadjoint Grassmannian of Dynkin type~$\mathrm{F}_4$.
We show that the restriction of the Faenzi--Manivel collection
to such a hyperplane section gives a full Lefschetz exceptional collection,
providing the first example of a full exceptional collection on a homogeneous variety of Dynkin type~$\mathrm{F}$.

We also describe the residual categories of these Lefschetz collections,
confirming conjectures of the second and third named author for the Cayley plane and its hyperplane section.
The latter description is based on a general result of independent interest,
relating residual categories of a variety and its hyperplane section.
\end{abstract}

\maketitle

% \tableofcontents

\section{Introduction}

Rational homogeneous varieties form an important and well-studied class of varieties,
whose geometry can be understood using the representation theory of algebraic groups.
The derived category of such a variety is conjectured to have a full exceptional collection,
and in many instances a construction is known.
For an overview of the state-of-the art (from a few years ago), one is referred to~\cite[\S1.1]{KuPo}.
The main advances since then are \cite{Fo19,Gu18,KSnew},
where exceptional collections were constructed on some homogeneous varieties of symplectic and orthogonal groups.

The part of the story corresponding to classical groups (Dynkin types $\rA$, $\rB$, $\rC$, and~$\rD$)
and the exceptional group of type~$\rG_2$
is relatively well studied, see~\cite{KuPo} and~\cite[\S6.4]{K06}.
For exceptional groups of Dynkin types~$\rE$ and $\rF$ on the contrary, very little is known.
The only homogeneous variety for these types that was known to have a full exceptional collection
is the cominuscule $\mathrm{E}_6$-Grassmannian (also called the \emph{Cayley plane})~\cite{FM}.

The main result of this paper is the construction of a full exceptional collection
for the coadjoint $\mathrm{F}_4$-Grassmannian (see~\eqref{eq:dby-intro} below and Theorem~\ref{theorem:f4} in the body of the paper for a more precise statement).

\begin{theorem}
\label{theorem:f4-intro}
Let $Y$ be the coadjoint Grassmannian of Dynkin type~$\rF_4$.
The derived category~$\Db(Y)$ has a full exceptional collection of length~$24$, consisting of equivariant vector bundles.
\end{theorem}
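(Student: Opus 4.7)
The plan is to construct the collection on $Y$ by pulling back the Faenzi--Manivel full Lefschetz collection from the Cayley plane $X = \rE_6/\rP_1$ along the inclusion $\iota\colon Y \hookrightarrow X$ of $Y$ as a general (and $\rF_4$-invariant) hyperplane section. Writing the Faenzi--Manivel decomposition as
\[
\Db(X) = \langle \cB_0, \cB_1(H), \ldots, \cB_{m-1}((m-1)H) \rangle
\]
with $\rE_6$-equivariant blocks satisfying $\cB_0 \supseteq \cB_1 \supseteq \cdots$, I would take the candidate on $Y$ to be the restriction of these blocks, truncated to the twist range admissible on $Y$ (one step shorter by Serre duality, since $\ind(Y) = \ind(X) - 1$). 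A bookkeeping of the surviving blocks against the lengths of the $\cB_i$ should recover the total~$24$.

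For exceptionality, I would check cross-$\Ext$ vanishings object by object. For any two bundles $E, F$ in the Faenzi--Manivel list, tensoring the short exact sequence $0 \to \cO_X(-H) \to \cO_X \to \iota_*\cO_Y \to 0$ with $\RHom_X(E, -)$ and applying the projection formula produces the triangle
\[
\RHom_X(E, F(-H))[-1] \to \RHom_X(E, F) \to \RHom_Y(\iota^*E, \iota^*F),
\]
so that the required vanishings on $Y$ reduce to the Faenzi--Manivel semiorthogonalities on $X$ together with a short list of additional vanishings for twists by $\cO_X(-H)$, which are accessible via Borel--Weil--Bott.

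The main obstacle is fullness. A direct generation argument on $Y$ is out of reach, so I would instead prove the general residual-category comparison announced in the abstract: for a smooth hyperplane section $\iota\colon Y \hookrightarrow X$ and a Lefschetz decomposition of $\Db(X)$ compatible with the hyperplane pencil, the residual category $\cR_Y$ of the induced Lefschetz data on $Y$ is a semiorthogonal gluing built from $\cR_X$ and a twisted copy of it. Since the Faenzi--Manivel collection is full, $\cR_X = 0$, and the comparison immediately forces $\cR_Y = 0$, yielding fullness of the restricted collection on $Y$. The technical core of the argument is thus the formulation and proof of this comparison theorem in sufficient generality and the verification of its hypotheses for the pair $(X, Y)$. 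Finally, the $\rF_4$-equivariance of each bundle in the resulting collection is automatic from the $\rF_4$-equivariance of~$\iota$.
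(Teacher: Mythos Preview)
Your setup and exceptionality argument match the paper (this is Lemma~\ref{lemma:lefschetz-hyperplane}), but your fullness strategy has a genuine gap. First, the ``residual category'' in this paper is not the orthogonal of the entire Lefschetz collection but of its \emph{rectangular part} $\langle \cA_{m-1}, \dots, \cA_{m-1}(m-1) \rangle$; for the Cayley plane $\cR_X$ is nonzero (it is generated by three completely orthogonal exceptional bundles, Theorem~\ref{theorem:e6-residual}), so the inference ``FM full $\Rightarrow \cR_X = 0$'' rests on the wrong definition. Second, even if one reads $\cR_Y$ as the orthogonal of the restricted collection in $\Db(Y)$, no general comparison of the kind you sketch exists: that orthogonal is precisely the HPD component of the hyperplane section, and it is typically nonzero even when the Lefschetz collection on $X$ is full. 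The paper's general result (Theorem~\ref{theorem:restriction-general}) in fact runs in the opposite direction --- it \emph{assumes} fullness of the restricted collection on $Y$ and then deduces the structure of the residual category $\bcR$.

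The paper's actual fullness proof is necessarily case-specific. The crux (Proposition~\ref{proposition:bce2}) is the containment $\bcE_2^\vee(2) \in \cD$, obtained from monads built out of the $\brG$-invariant quadratic form $\Ca_0 = \Ca(v_0,-,-)$ on $V$: one shows that $\bcE_1 \subset V \otimes \cO_Y$ is $\Ca_0$-isotropic (Lemma~\ref{lemma:bce1-isotropic}) and extracts from the resulting monad and its exterior square the required bundles (Lemmas~\ref{lemma:monads-v} and~\ref{lemma:omega1-omega2}). Once $\bcE_2^\vee(2) \in \cD$ is known, one identifies $\cD^\perp$ with $\{C : i_*C \in \langle \cO_X(-1), \cE_1^\vee(-1)\rangle\}$; since $\rK_0(Y)$ and $\rK_0(\cD)$ both have rank~$24$, Samokhin's criterion (Theorem~\ref{theorem:samokhin}) forces $\cD^\perp = 0$. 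The $\Ca_0$-isotropy of $\bcE_1$ is the specific geometric input that makes this work; there is no shortcut through a general comparison theorem.
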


Note that the big quantum cohomology ring of~$Y$ is generically semisimple~\cite[Theorem~5.3.2]{Pe},
so Theorem~\ref{theorem:f4-intro} provides yet another instance where the first part of Dubrovin's conjecture is known to hold.

Our approach is based on the \emph{folding relation} between Dynkin diagrams of type~$\rE_6$ and~$\rF_4$
\begin{equation}
\label{diagram:folding}
  \dynkin[edge length = 2em,labels*={1,...,6},involutions={16;35}]E6
  \quad = \quad
  \dynkin[edge length = 2em,labels*={1,...,4}]F4.
\end{equation}
This provides a relation between representations of groups of type $\rE_6$ and~$\rF_4$,
and as a consequence, between their homogeneous varieties.
It takes the simplest form for the Cayley plane
\begin{equation*}
X = \rE_6/\rP_1 \subset \bP^{26}
\end{equation*}
and the coadjoint Grassmannian of Dynkin type~$\rF_4$
\begin{equation*}
Y = \rF_4/\rP_4 \subset \bP^{25}.
\end{equation*}
In this case $Y$ is isomorphic to a generic hyperplane section of~$X$, see~\cite[\S6.3]{LM} and~\cite[\S III.2.5.F]{Zak}.
The embedding $Y \hookrightarrow X$ allows us to use the results \cite{Manivel,FM} of Faenzi and Manivel on the derived category of~$X$
to describe the derived category of~$Y$.

More precisely, we do the following. The exceptional collection on~$X$ constructed in~\cite{FM} can be rewritten (see Theorem~\ref{theorem:fm}) as
\begin{multline}
\label{eq:dbx-intro}
\Db(X) = \langle
\cO_X, \cE_1^\vee, \cE_2^\vee;\
\cO_X(1), \cE_1^\vee(1), \cE_2^\vee(1);\
\cO_X(2), \cE_1^\vee(2), \cE_2^\vee(2);\ \\
\cO_X(3), \cE_1^\vee(3);\ \dots;\
\cO_X(11), \cE_1^\vee(11) \rangle,
\end{multline}
where $\cE_1$ and $\cE_2$ are equivariant vector bundles of ranks~10 and~54.
Note that this is a \emph{Lefschetz collection},
a notion introduced in~\cite{K07} in the context of homological projective duality,
see also~\cite{Ku08a,K14}, or~\S\ref{subsection:lefschetz} below.
In other words, it is split into blocks (separated in~\eqref{eq:dbx-intro} by semicolons,
so that we have three blocks of length~3 and nine blocks of length~2, altogether~27 bundles),
which are related via twists by~$\mathcal{O}_X(1)$.

Lefschetz collections are known to behave well with respect to hyperplane sections:
if one removes the first block and restricts the others to the hyperplane section,
we again obtain an exceptional collection, although not full in general, see~\cite[Proposition 2.4]{Ku08a}.
We show that in the case of the Lefschetz collection~\eqref{eq:dbx-intro} for the Cayley plane
the restricted collection on~$Y$ \emph{is} in fact full, hence we have a full exceptional collection
\begin{multline}
\label{eq:dby-intro}
\Db(Y) = \langle
\cO_Y, \cE_1^\vee\vert_Y, \cE_2^\vee\vert_Y;\
\cO_Y(1), \cE_1^\vee(1)\vert_Y, \cE_2^\vee(1)\vert_Y;\ \\
\cO_Y(2), \cE_1^\vee(2)\vert_Y;\ \dots;\
\cO_Y(10), \cE_1^\vee(10)\vert_Y \rangle.
\end{multline}
To prove this we verify in Proposition~\ref{proposition:bce2} that the vector bundle $\cE_2^\vee(2)\vert_Y$
belongs to the right-hand side of~\eqref{eq:dby-intro},
and then appeal to an argument of Samokhin from the proof of~\cite[Theorem 2.3]{Samokhin} (see Theorem~\ref{theorem:samokhin})
to check that equality holds in~\eqref{eq:dby-intro}. This proves Theorem~\ref{theorem:f4-intro}.

This result has a natural interpretation from the point of view of homological projective duality.
Indeed, it is equivalent to saying that the HPD part of the derived category of any smooth hyperplane section of the Cayley plane~$X$ vanishes.
This means that the homological projective dual variety is supported over the classical projective dual hypersurface of~$X$,
the \emph{Cartan cubic}, see~\cite{IM}.
Thus, it is natural to suggest the following

\begin{conjecture}
The homological projective dual variety of the Cayley plane
is a non-commutative resolution of the Cartan cubic hypersurface in~$\bP^{26}$.
\end{conjecture}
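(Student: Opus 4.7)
The plan is to work entirely within the framework of homological projective duality (HPD) applied to the Lefschetz decomposition~\eqref{eq:dbx-intro}. For a variety~$X \subset \bP(V)$ equipped with a rectangular or staircase Lefschetz decomposition, HPD produces a triangulated category~$\cC^\natural$ fibered over~$\bP(V^\vee) = \bP^{26}$ whose fibers over $[H] \in \bP(V^\vee)$ control the residual component of~$\Db(X \cap H)$. The first step is to record what Theorem~\ref{theorem:f4-intro} already says in this language: the fullness of the Lefschetz collection on~$Y = X \cap H$ for \emph{generic} $H$ is equivalent to the vanishing of the HPD fiber over such an~$[H]$. Combined with the fact that $Y$ is smooth exactly when $[H]$ does not lie on the projective dual of~$X$, namely the Cartan cubic~$D \subset \bP^{26}$, this forces the support of~$\cC^\natural \to \bP^{26}$ to be set-theoretically contained in~$D$.

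Next I would upgrade this set-theoretic statement to an actual geometric (or rather non-commutative) description. Since the Faenzi--Manivel collection is made of equivariant vector bundles and its complement in~$\Db(X)$ is generated by a small number of objects, one can in principle construct a finite $\rE_6$-equivariant sheaf of associative algebras~$\cA$ on the universal hyperplane section $\cY \to \bP^{26}$ whose derived category realizes the residual component pointwise. Pushing~$\cA$ forward along $\cY \to \bP^{26}$ and restricting to~$D$ should yield a sheaf of algebras~$\cA_D$ on~$D$. The candidate HP dual category is then~$\Db(D, \cA_D)$, and to prove the conjecture one must verify the three Van den Bergh axioms for a non-commutative resolution: $\cA_D$ is coherent as an~$\cO_D$-module, has finite global dimension, and generically agrees with a matrix algebra over~$\cO_D$. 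The generic matrix-algebra property follows from the fact that over the smooth locus of~$D$ the hyperplane section~$Y$ acquires exactly one extra bundle not captured by the restricted Lefschetz collection, which should give a line bundle-like endomorphism algebra at the generic point of~$D$.

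The main obstacle, and the reason the statement is only conjectural in the paper, is the construction of this sheaf of algebras together with the verification of finite global dimension. Unlike the classical examples of HP duals of Grassmannians and Lagrangian Grassmannians, where the dual is realized as an explicit moduli space (Pfaffian loci, symmetric determinantal loci, etc.), no such moduli description of~$\cA_D$ is available for the Cayley plane. One strategy to circumvent this is to use the folding relation~\eqref{diagram:folding}: since~$Y$ is an~$\rF_4$-homogeneous variety, its derived category carries a residual component that on~$X = \rE_6/\rP_1$ is invisible, and combining this with the work of the second and third named authors on residual categories of coadjoint varieties should pin down the structure of~$\cA_D$ along the most singular strata of~$D$ (in particular, along the Cayley plane~$X \subset D$ itself, which is the singular locus of the cubic).

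Finally, to close the argument one would compare~$\Db(D, \cA_D)$ with the HPD category constructed abstractly via the universal hyperplane section of~$X$, using Kuznetsov's main theorem of HPD applied to linear sections of dimension higher than one (cones, pencils, nets of hyperplanes), and check compatibility of the two Lefschetz decompositions on both sides. Granted the existence of~$\cA_D$ with the required properties, this comparison is expected to be formal; the real content of the conjecture is therefore concentrated in the construction of the non-commutative resolution.
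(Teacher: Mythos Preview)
The statement you are addressing is a \emph{Conjecture}; the paper does not prove it and does not claim to. What the paper actually supplies is motivation: Theorem~\ref{theorem:f4-intro} shows that for a smooth hyperplane section the HPD component vanishes, so the HP dual is supported over the projective dual of~$X$, the Cartan cubic; the analogy with the other Severi varieties $\G(2,6)$ and $\bP^2\times\bP^2$ (whose HP duals are known to be non-commutative resolutions of the corresponding cubics) then makes the conjecture plausible. There is therefore no ``paper's own proof'' to compare your proposal against.

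Your write-up is not a proof either, and you say so yourself: the construction of the sheaf of algebras~$\cA_D$ and the verification of finite global dimension are left as open problems. What you have written is a reasonable research outline---the first paragraph correctly extracts from Theorem~\ref{theorem:f4-intro} that the HP dual is supported on the Cartan cubic, and the remaining paragraphs sketch the standard template for exhibiting an HP dual as a non-commutative resolution. But note a small inaccuracy: Theorem~\ref{theorem:f4-intro} as stated covers only the \emph{generic} (in fact $\brG$-homogeneous) hyperplane section, whereas to conclude that the support of the HP dual lies in~$D$ you need fullness of the restricted collection for \emph{every} smooth hyperplane section; the paper asserts this equivalence in the paragraph preceding the conjecture, but it requires the observation that all smooth hyperplane sections are isomorphic (they form a single $\rG$-orbit in~$\bP(V)$), which you should make explicit. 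Beyond that, the substantive steps you list---building $\cA_D$, checking the Van den Bergh axioms, matching with the abstract HPD category---are genuinely open, so your proposal should be read as a programme rather than a proof.
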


Note that the Cayley plane is one of the four \emph{Severi varieties} \cite{Zak}.
Each Severi variety has a singular cubic hypersurface as its classical projective dual variety,
and in the two out of the other three cases, for $\G(2,6)$ and~$\bP^2 \times \bP^2$,
the homological projective dual varieties are noncommutative resolutions of the corresponding cubic hypersurfaces,
the Pfaffian hypersurface in~$\bP^{14}$ and the determinantal hypersurface in~$\bP^8$,
see~\cite[Theorem~1]{0610957v1} and~\cite[Theorem~C.1, Remark~C.2]{K17}.
Thus, the Cayley plane is an analogue of these two simpler cases.

In the case of the last Severi variety, the double Veronese embedding of~$\bP^2$, the analogy partially fails ---
the homologically projective dual variety is given by a sheaf of noncommutative algebras over~$\bP^5$,
which are generically Morita-trivial and whose discriminant locus is the symmetric determinantal cubic hypersurface
(the classical projective dual of the Veronese surface), see~\cite{K08b}.

\medskip

The second result of this paper is a description of the residual categories
for the Lefschetz collections~\eqref{eq:dbx-intro} and~\eqref{eq:dby-intro}.

The notion of a residual category was introduced in~\cite{KS2020}, see~\S\ref{subsection:residual-general} for an overview.
It is an invariant of a full Lefschetz collection, which vanishes if and only if the collection is rectangular,
i.e., all blocks in the collection are the same.

In~\cite[Conjecture~1.12]{KS2020} a conjecture relating the structure of the residual category and the small quantum cohomology ring of a variety was suggested:
when the small quantum cohomology ring is generically semisimple,
it predicts that the residual category is generated by a completely orthogonal exceptional collection.
For the Cayley plane the small quantum cohomology is known to be generically semisimple~\cite[Corollary~1.2]{ChMaPe3},
and we verify that the prediction for the residual category indeed holds.

\begin{theorem}
\label{theorem:residual-e6-intro}
The residual category of the Lefschetz collection~\eqref{eq:dbx-intro} on the Cayley plane
is generated by three completely orthogonal exceptional bundles.
\end{theorem}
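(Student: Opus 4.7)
I would exhibit three explicit exceptional objects generating $\cR$ and then establish their complete orthogonality by combining the built-in semi-orthogonality of the Lefschetz collection with a rotation autoequivalence of $\cR$.

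Unraveling the definition of the residual category applied to~\eqref{eq:dbx-intro} gives
$$\Db(X) = \langle \cR,\ \cB,\ \cB(1),\ \ldots,\ \cB(11) \rangle, \qquad \cB = \langle \cO_X, \cE_1^\vee \rangle,$$
and matching with~\eqref{eq:dbx-intro} shows that $\cR$ is generated by three objects $F_0, F_1, F_2$ obtained from $\cE_2^\vee$, $\cE_2^\vee(1)$, $\cE_2^\vee(2)$ by left mutations through appropriate portions of the rectangular part $\langle \cB, \cB(1), \ldots, \cB(11) \rangle$. A Borel--Weil--Bott computation on $\rE_6/\rP_1$ of the relevant Ext-groups should concentrate each mutation in a single cohomological degree, so that each $F_i$ is in fact a vector bundle rather than merely a complex.

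Next, I would invoke the rotation autoequivalence $\tau\colon \cR \to \cR$ of~\cite{KS2020}, built from $-\otimes\cO_X(1)$ combined with mutations and normalized by the Serre functor $-\otimes\cO_X(-12)[16]$ (using $\omega_X \cong \cO_X(-12)$ and $\dim X = 16$). The Lefschetz length pattern $3,3,3,2,\ldots,2$ together with the anticanonical index $12$ is designed so that $\tau$ has order $3$ and cyclically permutes $F_0 \mapsto F_1 \mapsto F_2 \mapsto F_0$ up to shift. Granted this, the semi-orthogonality of the Lefschetz collection provides $\Ext^\bullet(F_j, F_i) = 0$ for $i < j$; applying $\tau$ propagates these vanishings cyclically, so that $\Ext^\bullet(F_0, F_1) = \Ext^\bullet(F_2, F_0) = 0$ and $\Ext^\bullet(F_1, F_2) = \Ext^\bullet(F_0, F_1) = 0$, and similarly $\Ext^\bullet(F_0, F_2) = \Ext^\bullet(F_1, F_0) = 0$. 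Together with the semi-orthogonal vanishings, this yields $\Ext^\bullet(F_i, F_j) = 0$ for all $i \neq j$, i.e., complete orthogonality.

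The hard part will be pinning down $\tau$ precisely and verifying both its order and the cyclic action on the generators; this ultimately reduces to explicit mutation calculations on the Cayley plane and the interplay between the twist by $\cO_X(1)$ and the rectangular part. As a routine but tedious alternative, one can compute the pairwise Ext-groups between the $\cE_2^\vee(i)$, as well as those between $\cE_2^\vee(i)$ and the rectangular bundles $\cO_X(k)$, $\cE_1^\vee(k)$, directly by Borel--Weil--Bott on $\rE_6/\rP_1$, and then extract $\Ext^\bullet(F_i, F_j)$ by linear algebra from the defining mutation triangles.
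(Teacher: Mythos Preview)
Your proposal is correct and follows essentially the same strategy as the paper: the residual category is generated by the left mutations of $\cE_2^\vee(i)$ for $i=0,1,2$ through the rectangular part, and complete orthogonality is deduced from the cyclic action of the induced polarization $\tau$ on the resulting exceptional triple. The paper carries out the ``hard part'' you flag by invoking an explicit self-dual six-term exact sequence of Faenzi--Manivel (a staircase-type complex resolving $\cE_2^\vee$), whose truncations give the $F_i$ directly as vector bundles and make the identities $\tau(F_i(i)) \cong F_{i+1}(i+1)[1]$ (indices mod $3$, with appropriate shifts) immediate.
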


The small quantum cohomology of the coadjoint Grassmannian of type~$\rF_4$
was studied in~\cite[Proposition 5.3]{ChPe} and is known \emph{not} to be generically semisimple,
so~\cite[Conjecture~1.12]{KS2020} is not applicable here.
Still, a more elaborate conjecture~\cite[Conjecture~1.1]{KSnew} can be applied in such a situation.
In the case of coadjoint Grassmannians this conjecture was made more precise in~\cite[Conjecture~1.5]{KSnew};
and we verify that its prediction holds in type~$\rF_4$.

\begin{theorem}
\label{theorem:residual-f4-intro}
The residual category of the Lefschetz collection~\eqref{eq:dby-intro} on the coadjoint Grassmannian of type~$\rF_4$ is equivalent
to the derived category of the Dynkin quiver of type~$\rA_2$.
\end{theorem}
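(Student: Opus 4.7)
The strategy is to combine Theorem~\ref{theorem:residual-e6-intro}, which describes $\cR_X$ completely, with the general hyperplane-section result advertised in the abstract. That result relates the residual category of a smooth hyperplane section to that of the ambient variety when compatible Lefschetz collections are present; since the Lefschetz collection~\eqref{eq:dby-intro} on~$Y$ is, by construction, the restriction of~\eqref{eq:dbx-intro} with the first primitive block removed, and both collections are full (the latter by Theorem~\ref{theorem:f4-intro}), we are exactly in the setting where it can be applied.

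Applying the general result yields an explicit description of $\cR_Y$ in terms of $\cR_X$ and data coming from the embedding $Y \hookrightarrow X$. By Theorem~\ref{theorem:residual-e6-intro}, $\cR_X = \langle R_1, R_2, R_3 \rangle$ is generated by three completely orthogonal exceptional bundles. The ranks of the Grothendieck groups---three for $\cR_X$ and two for $\cR_Y$---indicate that exactly two exceptional objects $S_1, S_2 \in \cR_Y$ arise from suitable projections of the restrictions $R_i\vert_Y$, and the hyperplane class should encode a single nontrivial extension between them.

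To conclude, it remains to verify that $\Ext^\bullet(S_1, S_2)$ is one-dimensional and concentrated in a single cohomological degree: this characterizes $\Db(\rA_2)$ among triangulated categories generated by a full exceptional pair. The computation is carried out via the Koszul resolution of $\cO_Y$ on~$X$, reducing the question to $\Ext$-groups on~$X$ that are controlled by the complete orthogonality of the $R_i$ in $\cR_X$. The main obstacle is threefold: setting up the general hyperplane-section result precisely enough to be applicable; identifying the correct candidates for $S_1, S_2$ among the projections of $R_i\vert_Y$; and pinning down the exact degree and dimension of the single nontrivial $\Ext$-group, so that the resulting category matches $\Db(\rA_2)$ rather than some other two-object hereditary structure.
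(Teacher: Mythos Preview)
Your strategy is correct and is exactly the route the paper takes: apply the general hyperplane-section theorem (Theorem~\ref{theorem:restriction-general}) to the pair $Y\hookrightarrow X$, using Theorem~\ref{theorem:f4-intro} for fullness on~$Y$ and Theorem~\ref{theorem:residual-e6-intro} for the structure of~$\cR_X$. What you are missing is the mechanism that resolves the ``obstacles'' you list at the end.

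The general theorem, stated precisely, says more than you indicate: it asserts that $\bcR_Y$ is a product $\prod_j \Db(\rA_{n_j-1})$ where the $n_j$ form a partition of~$n=3$ with every part $n_j\ge 2$. With $n=3$ this forces a single part $n_1=3$, hence $\bcR_Y\cong\Db(\rA_2)$ with no further computation. So your Grothendieck-group count is not needed to rule out other possibilities; the constraint $n_j\ge 2$ already does this. The paper in fact argues slightly differently: it observes that the partition is the cycle type of the induced polarization $\tau$ acting on the three orthogonal generators of~$\cR_X$, and the explicit formula~\eqref{eq:tau-e6} for~$\tau$ shows this is a single $3$-cycle. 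Either way, the answer drops out immediately.

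Your proposed direct $\Ext$ computation via the Koszul sequence would also work, but it is precisely what the proof of the general theorem does: one gets a triangle with terms $\Ext^\bullet_X(R_i(1),R_j)$ and $\Ext^\bullet_X(R_i,R_j)$, and while the second is controlled by complete orthogonality, the first is not---you need to know how $R_i(1)$ sits relative to the $R_j$, which is exactly the content of the $\tau$-action. So the ``data coming from the embedding'' that you allude to is the induced polarization~$\tau$ on~$\cR_X$; once you name it, your three obstacles dissolve.
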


Precise versions of these two theorems can be found in Theorem~\ref{theorem:e6-residual}
and~Theorem~\ref{theorem:f4-residual} in the body of the paper.

We prove Theorem~\ref{theorem:residual-e6-intro} by an explicit calculation, see~\S\ref{subsection:residual-e6},
analogous to that of~\cite[Theorem~9.5]{CMKMPS}.
In particular, we describe explicitly the three exceptional bundles generating the residual category.

On the other hand, we deduce Theorem~\ref{theorem:residual-f4-intro} from Theorem~\ref{theorem:f4-intro}, Theorem~\ref{theorem:residual-e6-intro},
and a general result of independent interest, Theorem~\ref{theorem:restriction-general},
which says that if the derived category of a smooth projective variety~$X$ has a Lefschetz decomposition
whose residual category is generated by a completely orthogonal exceptional collection
and the derived category of a hyperplane section $Y \subset X$ is generated by the restricted Lefschetz collection,
then the residual category of~$Y$ is a product of derived categories of Dynkin quivers of type~$\rA$.

\medskip

To finish the introduction we want to point out that the similarity between the Cayley plane and the Severi variety $\G(2,6)$ (which is a homogeneous variety of Dynkin type~$\rA_5$)
persists on the level of residual categories and hyperplane sections:
\begin{itemize}
\item
The Grassmannian $\G(2,6)$ has a Lefschetz collection with three blocks of length~3 and three blocks of length~2 \cite[Theorem~4.1]{Ku08a}
and the residual category of~$\G(2,6)$ is generated by~3 completely orthogonal exceptional bundles~\cite[Theorem~9.5]{CMKMPS}.
\item
Smooth hyperplane sections $\IG(2,6) \subset \G(2,6)$ are homogeneous varieties
that correspond to the folding of the Dynkin diagram~$\rA_5$ into the Dynkin diagram~$\rC_3$:
\begin{equation*}
  \dynkin[edge length=2em,labels*={1,...,5},involutions={15;24}]A5
  \quad
  =
  \quad
  \dynkin[edge length=2em,labels*={1,2,3}]C3.
\end{equation*}
\item
The restricted Lefschetz collection of~$\G(2,6)$ (with two blocks of length~3 and three blocks of length~2) on~$\IG(2,6)$
is full~\cite[Theorem~5.1]{Ku08a}
and the residual category of~$\IG(2,6)$ is equivalent to the derived category of representations
of the Dynkin quiver of type~$\rA_2$ \cite[Theorem~9.6]{CMKMPS}.
\end{itemize}

Of course, the last statement could be also deduced from Theorem~\ref{theorem:restriction-general}
in the same way as Theorem~\ref{theorem:residual-f4-intro}.

The paper is organized as follows.
In~\S\ref{section:lefschetz-residual-general} we recall definitions and basic facts about Lefschetz collections and their residual categories,
and prove Theorems~\ref{theorem:samokhin} and~\ref{theorem:restriction-general}.
In~\S\ref{section:residual-category-cayley-plane} we collect some facts about vector bundles on the Cayley plane
and prove Theorem~\ref{theorem:e6-residual}, which is a precise version of Theorem~\ref{theorem:residual-e6-intro}.
Finally, in~\S\ref{section:f4} we prove Theorems~\ref{theorem:f4} and~\ref{theorem:f4-residual}, which are
precise versions of Theorems~\ref{theorem:f4-intro} and~\ref{theorem:residual-f4-intro}.
In Appendix~\ref{appendix:weights} we collect some computations with weight lattices that are used in the body of the paper.

\medskip

\textbf{Conventions.}
Throughout we work over an algebraically closed base field~$\Bbbk$ of characteristic~0.
All functors between derived categories are implicitly derived.

\section{Lefschetz decompositions and residual categories}
\label{section:lefschetz-residual-general}

We start with a brief reminder of Lefschetz decompositions and their residual categories.
Then in Theorem~\ref{theorem:restriction-general} we prove, under appropriate assumptions, a general result relating the residual categories of a variety and a hyperplane section.

\subsection{Lefschetz decompositions}
\label{subsection:lefschetz}

Let $X \subset \bP(V)$ be a smooth projective variety.
A \emph{Lefschetz decomposition} \cite{K07} of~$\Db(X)$ is a semiorthogonal decomposition of the form
\begin{equation}
\label{eq:lefschetz-general}
\Db(X) = \langle \cA_0, \cA_1(1), \dots, \cA_{m-1}(m-1) \rangle,
\end{equation}
where the categories $\cA_p$ form a chain
\begin{equation}
\label{eq:lefschetz-chain}
0 \ne \cA_{m-1} \subseteq \dots \subseteq \cA_1 \subseteq \cA_0
\end{equation}
in~$\Db(X)$ and $\cA_p(t) = \cA_p \otimes \cO_{\bP(V)}(t)\vert_X$.
The integer~$m$ is called the \emph{length} of the Lefschetz decomposition.

If the first component $\cA_0$ of a Lefschetz decomposition is generated by an exceptional collection
\begin{equation*}
\cA_0 = \langle E_1, \dots, E_k \rangle
\end{equation*}
which is compatible with the Lefschetz chain~\eqref{eq:lefschetz-chain}, i.e.,
\begin{equation*}
\cA_p = \langle E_1, \dots, E_{k_p} \rangle
\end{equation*}
for a non-increasing sequence $k = k_0 \ge k_1 \ge \dots \ge k_{m-1} > 0$ we say that
\begin{equation*}
(E_1, \dots, E_{k_0};\ E_1(1), \dots, E_{k_1}(1);\ \dots;\ E_1(m-1), \dots, E_{k_{m-1}}(m-1))
\end{equation*}
is a \emph{Lefschetz collection} in $\Db(X)$.

More generally, if $\cD$ is an admissible subcategory of $\Db(X)$ for a smooth and proper variety $X$
(one could also work with smooth and proper DG-categories, but we do not need this level of generality)
and~$\tau \colon \cD \to \cD$ is an autoequivalence, a Lefschetz decomposition of~$\cD$ with respect to~$\tau$
is a semiorthogonal decomposition of the form
\begin{equation*}
\cD = \langle \cA_0, \tau(\cA_1), \dots, \tau^{m-1}(\cA_{m-1}) \rangle,
\end{equation*}
where the categories $\cA_p$ form a chain of admissible subcategories in~$\cD$, similar to~\eqref{eq:lefschetz-chain}.

Lefschetz decompositions of projective varieties are useful for many reasons, see~\cite{K14} for a survey.
One of the most important properties is their compatibility with hyperplane sections.

\begin{lemma}[{\cite[Proposition 2.4]{Ku08a}}]
\label{lemma:lefschetz-hyperplane}
Let~\eqref{eq:lefschetz-general} be a Lefschetz decomposition and
let~\mbox{$i \colon Y \hookrightarrow X$} be a hyperplane section.
The derived pullback functor
\begin{equation*}
i^* \colon \Db(X) \to \Db(Y)
\end{equation*}
is fully faithful on the components~$\cA_p$ for~\mbox{$p \ge 1$}.
Moreover, if
\begin{equation}
\label{eq:bca-general}
\bcA_p = i^*(\cA_p),
\qquad
1 \le p \le m - 1
\end{equation}
then the subcategories $\bcA_p(p-1)$, $1 \le p \le m-1$, are semiorthogonal in~$\Db(Y)$.
\end{lemma}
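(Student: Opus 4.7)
The plan is to derive both statements formally from two inputs: the standard triangle
\begin{equation*}
F(-1) \to F \to i_* i^* F \to F(-1)[1]
\end{equation*}
in $\Db(X)$, obtained by tensoring the short exact sequence $0 \to \cO_X(-1) \to \cO_X \to i_*\cO_Y \to 0$ with $F \in \Db(X)$ and invoking the projection formula, together with the adjunction $\RHom_Y(i^*{-},{-}) = \RHom_X({-}, i_*{-})$. In each case the relevant $\RHom$-group on $Y$ is then controlled by a short triangle of $\RHom$-groups on $X$, and each summand of that triangle is killed by the Lefschetz semiorthogonality combined with the chain condition~\eqref{eq:lefschetz-chain}.

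For fully faithfulness of $i^*$ on $\cA_p$ with $p \geq 1$: the cone of the natural map $\RHom_X(A, B) \to \RHom_Y(i^* A, i^* B)$ for $A, B \in \cA_p$ is identified with $\RHom_X(A, B(-1))[1] = \RHom_X(A(1), B)[1]$. Since $\cA_p \subseteq \cA_1 \subseteq \cA_0$ by the chain, we have $A(1) \in \cA_1(1)$ and $B \in \cA_0$, so this cone vanishes by the Lefschetz semiorthogonality $\RHom(\cA_1(1), \cA_0) = 0$.

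For the semiorthogonality of the blocks $\bcA_p(p-1)$: for $1 \leq q < p \leq m-1$ and $A \in \cA_p$, $B \in \cA_q$ one needs $\RHom_Y(i^* A, i^* B(q-p)) = 0$. Adjunction followed by the triangle above, applied to $B(q-p)$, reduces this to two vanishings on $X$: $\RHom_X(A, B(q-p)) = \RHom_X(A(p), B(q)) = 0$, which is immediate from the Lefschetz semiorthogonality since $q < p$, and $\RHom_X(A, B(q-p-1)) = \RHom_X(A(p), B(q-1)) = 0$. For the latter, the hypothesis $q \geq 1$ makes the chain inclusion $\cA_q \subseteq \cA_{q-1}$ available, placing $B(q-1) \in \cA_{q-1}(q-1)$, and the Lefschetz semiorthogonality again kills the group since $q - 1 < p$. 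I expect no serious obstacle: the argument is purely formal, the only subtlety being the careful placement of each shifted object into a block of the Lefschetz decomposition, with the restriction to $p \geq 1$ (hence $q \geq 1$ in the semiorthogonality part) being exactly what allows one to step one index backward in the chain.
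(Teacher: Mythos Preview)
Your argument is correct and is exactly the standard one: the paper does not prove this lemma but cites \cite[Proposition~2.4]{Ku08a}, where the same divisor triangle and adjunction are used. The same manipulations with~\eqref{eq:koszul-y} and the pullback--pushforward adjunction also appear later in the paper (proof of Lemma~\ref{lemma:residual-restriction}), so your write-up matches both the cited source and the paper's own style.
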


The lemma implies that there is an admissible subcategory \mbox{$\cD \subset \Db(Y)$} and a Lefschetz decomposition
\begin{equation}
\label{eq:restricted-lefschetz}
\cD = \langle \bcA_1, \bcA_2(1), \dots, \bcA_{m-1}(m-2) \rangle.
\end{equation}
We will call this the \emph{restricted Lefschetz decomposition}.
If $\cD = \Db(Y)$ we say that the restricted Lefschetz decomposition \emph{generates}~$\Db(Y)$.

The following result, proved in a particular case by Samokhin,
can be used to check that the restricted Lefschetz decomposition generates~$\Db(Y)$.
We denote by~$\langle \cF, \cG \rangle$ the triangulated subcategory generated by an exceptional pair~$(\cF,\cG)$.

\begin{theorem}[{cf.~\cite[Theorem~2.3]{Samokhin}}]
\label{theorem:samokhin}
Let $i\colon Y \hookrightarrow X$ be a closed embedding of a proper subvariety
and let $(\cF,\cG)$ be an exceptional pair of coherent sheaves on~$X$ with~$\cF$ torsion-free.
Consider the full subcategory
\begin{equation*}
\cC \coloneqq \{ C \in \Db(Y) \mid i_*C \in \langle \cF, \cG \rangle \}.
\end{equation*}
If the morphism of Grothendieck groups $\rK_0(\cC) \to \rK_0(Y)$
induced by the embedding of categories~$\cC \hookrightarrow \Db(Y)$ is zero, then~\mbox{$\cC = 0$}.
\end{theorem}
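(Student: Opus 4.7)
The plan is to take any $C\in\cC$ and show that $C=0$. The argument rests on three ideas in sequence: (a) the exceptional pair produces a semiorthogonal triangle for $i_*C$ whose boundary map vanishes, forcing a \emph{splitting} $i_*C\simeq\cF\otimes V^\bullet\oplus\cG\otimes W^\bullet$; (b) the torsion-freeness of $\cF$ together with the support condition on $i_*C$ then kills $V^\bullet$; (c) the $\rK_0$-hypothesis rules out the remaining pathological case when $\cG$ could be pushed forward from $Y$.

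\emph{Splitting.} By definition of $\cC$ we have $T:=i_*C\in\langle\cF,\cG\rangle$. Exceptionality provides the semiorthogonal decomposition $\langle\cF,\cG\rangle=\langle\langle\cF\rangle,\langle\cG\rangle\rangle$, and hence $T$ fits into a distinguished triangle
$$\cF\otimes_\Bbbk V^\bullet\to T\to \cG\otimes_\Bbbk W^\bullet\to \cF\otimes_\Bbbk V^\bullet[1],$$
with $W^\bullet=\RHom_X(\cG,T)$ and $V^\bullet$ determined analogously for $\cF$. The boundary morphism lies in $\Hom^\bullet_{\Db(X)}(\cG\otimes W^\bullet,\cF\otimes V^\bullet[1])$, which is a direct sum of shifts of copies of the graded group $\Ext^*_X(\cG,\cF)$; this group vanishes identically by exceptionality of the pair. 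The triangle therefore splits, giving $T\simeq\cF\otimes V^\bullet\oplus\cG\otimes W^\bullet$, and in particular $\cH^k(T)=\cF^{\oplus v_k}\oplus\cG^{\oplus w_k}$ with $v_k:=\dim V^k$, $w_k:=\dim W^k$.

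\emph{Killing $V^\bullet$ and $W^\bullet$.} Each cohomology sheaf $\cH^k(T)=i_*\cH^k(C)$ is annihilated by the ideal sheaf $\mathcal{I}_Y$, and in particular vanishes on $U:=X\setminus Y$. Applied to the $\cF$-summand, this yields $\cF|_U^{\oplus v_k}=0$; but $\cF$ is nonzero and torsion-free on the integral scheme $X$, so $\cF|_U\neq 0$, forcing $v_k=0$ for every $k$. Hence $V^\bullet=0$ and $T\simeq\cG\otimes W^\bullet$, whose cohomology sheaves $\cG^{\oplus w_k}$ are also annihilated by $\mathcal{I}_Y$. If $\cG$ itself is not annihilated by $\mathcal{I}_Y$, this forces $w_k=0$ for all $k$, so $T=0$ and, by conservativity of $i_*$ on $\Db(Y)$, $C=0$. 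If on the other hand $\cG=i_*\cG'$ for some nonzero $\cG'\in\mathrm{Coh}(Y)$, then $\cG'\in\cC$ (since $i_*\cG'=\cG\in\langle\cF,\cG\rangle$), and the hypothesis then gives $[\cG']=0$ in $\rK_0(Y)$; but a nonzero coherent sheaf on $Y$ has nonzero class in $\rK_0(Y)$, a contradiction. This pathology is therefore excluded, and $\cC=0$ in all cases.

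The main obstacle, conceptually, is the splitting in the first step: it hinges on recognising that exceptionality of the pair provides not only $\Hom_X(\cG,\cF)=0$ but the vanishing of \emph{all} higher Exts, which is exactly what makes the boundary of the semiorthogonal triangle zero. Once this is in hand, the rest is a clean combination of the torsion-freeness of $\cF$ with a single, well-placed use of the $\rK_0$-vanishing hypothesis.
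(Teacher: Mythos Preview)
The splitting step contains a genuine error. For an exceptional pair $(\cF,\cG)$ one has $\Ext^\bullet_X(\cG,\cF)=0$, so in the semiorthogonal decomposition $\langle\cF,\cG\rangle=\langle\langle\cF\rangle,\langle\cG\rangle\rangle$ the $\langle\cG\rangle$-component comes \emph{first}: the decomposition triangle for $T=i_*C$ reads
\[
\cG\otimes W^\bullet \longrightarrow T \longrightarrow \cF\otimes V^\bullet \longrightarrow \cG\otimes W^\bullet[1],
\]
with $W^\bullet=\RHom_X(\cG,T)$ (the canonical evaluation goes from $\cG\otimes\RHom(\cG,T)$ to $T$, not the other way). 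The boundary therefore lives in shifts of $\Ext^\bullet_X(\cF,\cG)$, which has no reason to vanish. In the application made later in the paper the pair is $(\cO_X(-1),\cE_1^\vee(-1))$ and $\Ext^\bullet_X(\cO_X,\cE_1^\vee)\cong\HH^0(X,\cE_1^\vee)\cong V^\vee$ is $27$-dimensional; for a toy example take $(\cF,\cG)=(\cO,\cO(1))$ on~$\bP^1$, where the skyscraper $\cO_p$ sits in the visibly nonsplit triangle $\cO(1)\to\cO_p\to\cO[1]$. So your identification $\cH^k(T)\cong\cF^{\oplus v_k}\oplus\cG^{\oplus w_k}$ fails, and the subsequent steps, which rest on it, collapse.

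The paper's proof avoids this by never splitting the triangle. It passes to the long exact sequence of cohomology sheaves of the correct triangle and observes that each map $\cH^t(T)\to A^t\otimes\cF$ vanishes because the source is torsion (being supported on $Y\subsetneq X$) and the target torsion-free; this breaks the long exact sequence into short exact sequences $0\to A^{t-1}\otimes\cF\to B^t\otimes\cG\to i_*\cH^t(C)\to 0$, which show that every individual cohomology sheaf $\cH^t(C)$ again lies in~$\cC$. The $\rK_0$-hypothesis is then applied to each such sheaf on~$Y$ (not only, as in your argument, to exclude the degenerate possibility $\cG=i_*\cG'$), forcing $\cH^t(C)=0$ for all~$t$.
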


\begin{proof}
Take any object $C \in \cC {} \subset \Db(Y)$ and denote by $\cH^t(C)$ its cohomology sheaf in degree~$t \in \bZ$.
The decomposition triangle
\begin{equation*}
B^\bullet \otimes \cG \to i_*C \to A^\bullet \otimes \cF \, ,
\end{equation*}
for the exceptional pair $\langle\cF,\cG\rangle$,
where~$A^\bullet$ and~$B^\bullet$ are graded vector spaces,
gives a long exact sequence of cohomology sheaves
\begin{equation*}
\dots \to A^{t-1} \otimes \cF \to B^t \otimes \cG \to \cH^t(i_*C) \to A^t \otimes \cF \to \dots
\end{equation*}
The morphism $\cH^t(i_*C) \to A^t \otimes \cF$ is zero,
because $\cH^t(i_*C) \cong i_*\cH^t(C)$ is a torsion sheaf on~$X$ and~$\cF$ is torsion-free.
Therefore, the sequence splits into short exact sequences
\begin{equation*}
0 \to A^{t-1} \otimes \cF \to B^t \otimes \cG \to i_*\cH^t(C) \to 0.
\end{equation*}
In particular, it follows that $\cH^t(C) \in \cC$ for each~$t$.

If $\cH^t(C) \ne 0$ for some $t$ then for $n \gg 0$ the sheaf $\cH^t(C) \otimes \cO_Y(n)$ has zero higher cohomology
and nonzero global sections, hence $\cH^t(C)$ is not numerically orthogonal to~$\cO_Y(-n)$, hence its class in~$\rK_0(Y)$ is non-zero.
This contradicts the assumption about the Grothendieck groups, therefore $\cH^t(C) = 0$ for each~$t$, and hence $C = 0$.
\end{proof}

\subsection{Residual categories}
\label{subsection:residual-general}

Let $X$ be a smooth projective variety with~$\Db(X)$ endowed with a Lefschetz decomposition~\eqref{eq:lefschetz-general}.
Assume $\omega_X \cong \cO_X(-m)$, where $m$ is the length of~\eqref{eq:lefschetz-general}.

\begin{definition}[{\cite[Definition~2.7]{KS2020}}]
The \emph{residual category} of~\eqref{eq:lefschetz-general} is defined as
\begin{equation}
\label{eq:residual-general}
\cR = \langle \cA_{m-1}, \cA_{m-1}(1), \dots, \cA_{m-1}(m-1) \rangle^\perp.
\end{equation}
\end{definition}

The residual category is an admissible subcategory of~$\Db(X)$,
and measures the difference between the smallest block~$\cA_{m-1}$ and the others.

It was shown in~\cite[Theorem~2.8]{KS2020} that $\cR$ is endowed with a natural autoequivalence
\begin{equation}
\label{eq:tau-general}
\tau \colon \cR \to \cR,
\qquad
R \mapsto \bL_{\cA_{m-1}}(R(1)),
\end{equation}
called the \emph{induced polarization} of~$\cR$, where $\bL$ stands for the left mutation functor,
which enjoys the property $\tau^m \cong \bS_\cR^{-1}[\dim X]$, where $\bS_\cR$ is the Serre functor of~$\cR$;
this property is analogous to the relation between the twist functor and the Serre functor of~$\Db(X)$.

The following result relates Lefschetz decompositions of the residual category~$\cR$ with respect to the induced polarization~$\tau$
to Lefschetz decompositions of~$\Db(X)$.

\begin{proposition}[{\cite[Proposition~2.10]{KS2020}}]
\label{proposition:lefschetz-correspondence}
Let~$X$ be a smooth projective variety such that $\omega_X \cong \cO_X(-m)$,
and let~$\cB$ be an admissible subcategory of~$\Db(X)$ such that~
\begin{equation*}
(\cB,\cB(1),\ldots,\cB(m-1))
\end{equation*}
is a semiorthogonal collection of subcategories. Let~$\mathcal{R}$ be the residual category, i.e.
\begin{equation*}
\mathcal{R}\coloneqq\langle \cB, \cB(1),\ldots , \cB(m-1)\rangle^\perp.
\end{equation*}
Then there exists a bijection between the sets of
\begin{itemize}
\item Lefschetz decompositions of the category~$\mathcal{R}$ with respect to the induced polarisation~$\tau$; and
\item Lefschetz decompositions of the category~$\Db(X)$
with respect to~$\mathcal{O}_X(1)$
such that \mbox{$\cB \subseteq\mathcal{A}_{m-1}$}.
\end{itemize}
The bijection takes a Lefschetz decomposition
\begin{equation*}
\cR = \langle \cC_0, \tau(\cC_1), \dots, \tau^{m-1}(\cC_{m-1}) \rangle
\end{equation*}
to the Lefschetz decomposition~\eqref{eq:lefschetz-general} with~$\cA_p \coloneqq \langle \cC_p, \cB \rangle$.
\end{proposition}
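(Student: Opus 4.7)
The plan is to define the correspondence in both directions and verify mutual invertibility together with the SOD correspondence. In the forward direction I set $\cA_p \coloneqq \langle \cC_p, \cB \rangle$, as in the statement. In the reverse, I send a Lefschetz decomposition of $\Db(X)$ with $\cB \subseteq \cA_{m-1}$ to the admissible complements $\cC_p \subseteq \cA_p$ of $\cB$, so that $\cA_p = \langle \cC_p, \cB \rangle$; these exist and are admissible since $\cB$ is admissible in each $\cA_p$. Admissibility of the new categories, the two chain conditions, and the inclusion $\cB \subseteq \cA_{m-1}$ then transfer formally, and the two operations are tautologically mutually inverse on the level of these splittings.

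The substantive content is the equivalence of the SOD conditions. The key technical ingredient is the identity
\[
\tau^p(\cC_p) \;=\; \bL_{\langle \cB, \cB(1), \dots, \cB(p-1) \rangle}\bigl(\cC_p(p)\bigr),
\]
proved by induction on $p$ from $\tau(R) = \bL_\cB(R(1))$ and the compatibility of twists with mutations. Granting this, in the forward direction the pairwise vanishings $\Hom^*(\cA_p(p), \cA_q(q)) = 0$ for $p > q$ split, via $\cA_p = \langle \cC_p, \cB \rangle$, into four families. Those among the $\cB(k)$'s are part of the hypothesis. The mixed vanishings $\Hom^*(\cB(p), \cC_q(q)) = 0$ reduce directly to $\Hom^*(\cB(p-q), \cR) = 0$, which holds by the definition $\cR = \langle \cB, \dots, \cB(m-1) \rangle^\perp$; the symmetric $\Hom^*(\cC_p(p), \cB(q)) = 0$ reduces via Serre duality (using $\omega_X \cong \cO_X(-m)$) to the same. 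The remaining $\Hom^*(\cC_p(p), \cC_q(q)) = 0$ follows, after using the mutation identity to replace $\cC_p(p)$, $\cC_q(q)$ with $\tau^p(\cC_p)$, $\tau^q(\cC_q)$ and absorbing the $\cB(k)$-correction terms via the preceding vanishings, from the SOD semiorthogonality inside $\cR$. Generation of $\Db(X)$ then follows because $\langle \cA_0, \cA_1(1), \dots, \cA_{m-1}(m-1) \rangle$ contains every $\cB(k)$ and every $\cC_p(p)$, hence by the mutation identity every $\tau^p(\cC_p)$, hence all of $\langle \cB, \dots, \cB(m-1), \cR \rangle = \Db(X)$. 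The reverse implication proceeds by the same arguments read backwards.

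The main obstacle I expect is the inductive proof of the mutation identity itself, which requires careful tracking of the twist through each layer of left mutation; once it is in hand, the remainder is essentially a combinatorial exercise in semiorthogonalities, driven entirely by Serre duality and the given SOD structures.
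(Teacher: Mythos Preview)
The paper does not prove this proposition; it is quoted verbatim from \cite[Proposition~2.10]{KS2020} and used as a black box. So there is no ``paper's own proof'' to compare against, and your outline must be judged on its own terms.

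Your approach is correct and is essentially the standard one. The identity
\[
\tau^p(C) \;=\; \bL_{\langle \cB, \cB(1), \dots, \cB(p-1) \rangle}\bigl(C(p)\bigr)
\qquad (C \in \cR)
\]
is indeed the heart of the matter; it follows from $\bL_{\cB}(-)(1) = \bL_{\cB(1)}((-)(1))$ together with the factorization $\bL_{\langle \cB, \cB(1), \dots, \cB(p-1) \rangle} = \bL_{\cB} \circ \bL_{\cB(1)} \circ \cdots \circ \bL_{\cB(p-1)}$, valid because the $\cB(k)$ are semiorthogonal. With this in hand, the four-way split of the semiorthogonality condition and the Serre-duality reduction of the ``wrong-side'' mixed term go through exactly as you describe. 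One small point worth making explicit in the reverse direction: when you define $\cC_p \coloneqq {}\cB^\perp \cap \cA_p$, you should check that $\cC_p$ actually lands in $\cR$, i.e.\ that $\Hom(\cB(k),\cC_p) = 0$ also for $1 \le k \le m-1$; this follows from $\cB(k) \subset \cA_k(k)$ and the semiorthogonality of the given Lefschetz decomposition of $\Db(X)$, but it is not automatic from the definition of $\cC_p$ alone.
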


\subsection{Residual categories of hyperplane sections}

As before, let~$X$ be a smooth projective variety with a Lefschetz decomposition~\eqref{eq:lefschetz-general}
and $\omega_X \cong \cO_X(-m)$.
Let $i \colon Y \hookrightarrow X$ be a smooth hyperplane section.
In general, the restricted Lefschetz decomposition~\eqref{eq:restricted-lefschetz} of~$\cD$ can be extended
to a Lefschetz decomposition of $\Db(Y)$
\begin{equation*}
\Db(Y) = \langle \bcA_1^+, \bcA_2(1), \dots, \bcA_{m-1}(m-2) \rangle
\end{equation*}
by replacing its first component~$\bcA_1$ by the bigger category
\begin{equation*}
\label{eq:bca-plus}
\bcA_1^+ \coloneqq \langle \bcA_2(1), \dots, \bcA_{m-1}(m-2) \rangle^\perp.
\end{equation*}
Let~\mbox{$\cR \subset \Db(X)$} and~\mbox{$\bcR \subset \Db(Y)$} be the corresponding residual categories.

\begin{lemma}
\label{lemma:residual-restriction}
Assume $m > 1$.
The restriction functor $i^* \colon \Db(X) \to \Db(Y)$ is compatible with the residual categories, i.e.,~$i^*(\cR) \subset \bcR$.
Moreover, the induced polarizations $\tau \colon \cR \to \cR$ and $\bar\tau \colon \bcR \to \bcR$
are related via the natural isomorphism
\begin{equation*}
i^*\circ\tau\cong\bar{\tau}\circ i^*.
\end{equation*}
\end{lemma}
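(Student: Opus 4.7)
The plan is to exploit the adjunction $i^* \dashv i_*$ together with the Koszul triangle
\[
R(-1) \to R \to i_*i^*R
\]
in~$\Db(X)$, obtained from the short exact sequence $0 \to \cO_X(-1) \to \cO_X \to i_*\cO_Y \to 0$ and the projection formula. This single tool will handle both assertions: each $\Hom$-vanishing on~$Y$ that we want is converted, via adjunction and the Koszul triangle, into two $\Hom$-vanishings on~$X$, both of which are supplied by the hypothesis $R\in\cR$.

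For the first statement I would fix $R \in \cR$ and verify that $\Hom(\bcA_{m-1}(p), i^*R[k]) = 0$ for all $0 \le p \le m-2$ and $k \in \bZ$. Using $\bcA_{m-1} = i^*\cA_{m-1}$ and adjunction, this becomes $\Hom(\cA_{m-1}(p), i_*i^*R[k]) = 0$, and applying $\Hom(\cA_{m-1}(p),-)$ to the Koszul triangle flanks this group by $\Hom(\cA_{m-1}(p), R[k])$ and $\Hom(\cA_{m-1}(p+1), R[k])$. Both vanish because $p, p+1 \in \{0,\ldots,m-1\}$ and $R$ is right-orthogonal to $\cA_{m-1}(j)$ for each such~$j$.

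For the compatibility with induced polarizations, I would apply $i^*$ to the defining triangle
\[
\pi_{\cA_{m-1}}(R(1)) \to R(1) \to \tau(R)
\]
of $\tau(R) = \bL_{\cA_{m-1}}(R(1))$ and use $i^*\cO_X(1) \cong \cO_Y(1)$. The first term lies in $i^*\cA_{m-1} = \bcA_{m-1}$, so by uniqueness of the $\langle\bcA_{m-1},\bcA_{m-1}^\perp\rangle$-decomposition it is enough to show $i^*\tau(R) \in \bcA_{m-1}^\perp$; the resulting triangle is then the mutation triangle of $(i^*R)(1)$ through $\bcA_{m-1}$, and consequently $i^*\tau(R) \cong \bL_{\bcA_{m-1}}((i^*R)(1)) = \bar\tau(i^*R)$. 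By the same adjunction and Koszul-triangle argument applied now to $\tau(R)$ in place of~$R$, the desired orthogonality reduces to the vanishing of $\Hom(\cA_{m-1},\tau(R)[k])$ and $\Hom(\cA_{m-1}(1),\tau(R)[k])$ for all~$k$. These hold because $\tau$ is an autoequivalence of~$\cR$ (cf.~\cite[Theorem~2.8]{KS2020}), so $\tau(R) \in \cR$ is right-orthogonal to the entire chain $\cA_{m-1},\cA_{m-1}(1),\ldots,\cA_{m-1}(m-1)$; the assumption $m>1$ is used here precisely to guarantee that the twist by~$1$ still lies within this chain.

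The main obstacle is the second step: at face value the mutation triangle for $\tau(R)$ on~$X$ only supplies orthogonality to $\cA_{m-1}$ itself, whereas the Koszul trick needs the stronger orthogonality to $\cA_{m-1}(1)$ as well. The resolution is to upgrade this by invoking that $\tau(R)$ actually lies in the residual category~$\cR$, which encodes orthogonality to the whole Lefschetz chain. The admissibility of $\bcA_{m-1} \subset \Db(Y)$ needed to make $\bL_{\bcA_{m-1}}$ well-defined is already a consequence of Lemma~\ref{lemma:lefschetz-hyperplane}, which ensures that $i^*|_{\cA_{m-1}}$ is fully faithful.
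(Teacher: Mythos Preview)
Your proof is correct and follows essentially the same route as the paper: adjunction plus the divisor (Koszul) triangle for the first claim, and pulling back the mutation triangle for the second. The only difference is cosmetic---where you rerun the Koszul argument to check $i^*\tau(R) \in \bcA_{m-1}^\perp$, the paper simply observes that $\tau(R)\in\cR$ and invokes the already-proved first part to get $i^*\tau(R)\in\bcR\subset\bcA_{m-1}^\perp$.
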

\begin{proof}
Note that $\omega_X \cong \cO_X(-m)$ implies~$\omega_Y \cong \cO_Y(1-m)$.
Recall that $\cR$ is defined by~\eqref{eq:residual-general} and analogously
\begin{equation*}
\bcR = \langle \bcA_{m-1}, \bcA_{m-1}(1), \dots, \bcA_{m-1}(m-2) \rangle^\perp.
\end{equation*}
To prove the inclusion $i^*(\cR) \subset \bcR$ we must show that
if $R \in \cR$ then for any object~\mbox{$A \in \cA_{m-1}$} and any~\mbox{$0 \le t \le m-2$}
we have $\Hom_Y(i^*A(t),i^*R) = 0$.

Indeed, the pullback--pushforward adjunction and projection formula imply
\begin{equation}
\label{equation:want-vanishing}
\Hom_Y(i^*A(t),i^*R) \cong
\Hom_X(A(t),i_*i^*R) \cong
\Hom_X(A(t),R \otimes \cO_Y).
\end{equation}
Using the divisor short exact sequence
\begin{equation}
\label{eq:koszul-y}
0 \to \cO_X(-1) \to \cO_X \to \cO_Y \to 0
\end{equation}
the vanishing of the spaces in \eqref{equation:want-vanishing}
follows from the two vanishings
\begin{equation*}
\Hom_X(A(t),R) = 0,
\qquad
\Hom_X(A(t),R(-1)) \cong \Hom_X(A(t+1),R) = 0,
\end{equation*}
both of which hold by definition of~$\cR$.

For the second claim, we take any $R \in \cR$ and consider the mutation triangle
\begin{equation*}
A \to R(1) \to \tau(R)
\end{equation*}
for~$R(1)$, where $A \in \cA_{m-1}$.
Applying $i^*$ we obtain the triangle
\begin{equation*}
i^*A \to (i^*R)(1) \to i^*(\tau(R)).
\end{equation*}
Since $m > 1$; $i^*A \in i^*\cA_{m-1} = \bcA_{m-1}$
and $i^*(\tau(R)) \in i^*\cR \subset \bcR$,
it follows that this is the mutation triangle for $(i^*R)(1)$, hence $\bar\tau(i^*R) \cong i^*(\tau(R))$.
\end{proof}

The main result of this section is the following

\begin{theorem}
\label{theorem:restriction-general}
Assume that the restricted Lefschetz decomposition generates the category~$\Db(Y)$, i.e.,
\begin{equation}
\label{eq:dby-induced}
\Db(Y) = \langle \bcA_1, \bcA_2(1), \dots, \bcA_{m-1}(m-2) \rangle.
\end{equation}
Assume moreover that the residual category $\cR$ of~$\Db(X)$ is generated
by a completely orthogonal exceptional collection
\begin{equation}
\label{eq:fi-orthogonal}
\cR = \langle R_1,\dots,R_n \rangle,
\qquad
\Ext_{X}^\bullet(R_i,R_j) = 0
\quad
\forall i \ne j.
\end{equation}
Then there exists a partition~$n=\sum_{i=1}^rn_i$ with $n_i \ge 2$ for all~$i$ and an equivalence
\begin{equation*}
\bcR \cong \Db(\rA_{n_1-1}) \times \Db(\rA_{n_2-1}) \times \dots  \times \Db(\rA_{n_r-1}).
\end{equation*}
with a product of derived categories of quivers of Dynkin type~$\rA_{n_i - 1}$.
\end{theorem}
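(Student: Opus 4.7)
The plan is to analyze the autoequivalence $\tau$ on the semisimple residual category~$\cR$, encode it as a permutation of the generators~$\{R_j\}$, and transport the resulting combinatorial data to~$\bcR$ via the restriction functor~$i^*$ using the compatibility $\bar\tau\circ i^*\cong i^*\circ\tau$ from Lemma~\ref{lemma:residual-restriction}. The partition $n=\sum n_i$ in the statement will emerge from the cycle lengths of this permutation.

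Since $\cR=\langle R_1,\dots,R_n\rangle$ is equivalent to the derived category of $n$ disjoint points, any autoequivalence of~$\cR$ permutes the generators up to shifts: $\tau(R_j)\cong R_{\sigma(j)}[s_j]$ for some $\sigma\in\mathfrak{S}_n$ and integers~$s_j$. The relation $\tau^m\cong\bS_\cR^{-1}[\dim X]$ forces $\sigma^m=\mathrm{id}$, so every cycle length of~$\sigma$ divides~$m$. The key calculation is the Ext-structure of the restricted objects $i^*R_j\in\bcR$: using adjunction and the divisor triangle~\eqref{eq:koszul-y} this reduces to computing $\Ext^\bullet_X(R_j, R_{j'}(-1))$; expressing $R_{j'}(-1)$ via the defining triangle of~$\tau$ on $R_{\sigma^{-1}(j')}$ twisted by~$\cO_X(-1)$, the essential input is the vanishing $\RHom_X(R_j, A(-1))=0$ for $A\in\cA_{m-1}$, which follows from Serre duality and the orthogonality $\cR\perp\cA_{m-1}(m-1)$. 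The outcome is that for $j\ne j'$,
\begin{equation*}
\Ext^\bullet_Y(i^*R_j, i^*R_{j'}) \cong \delta_{\sigma(j),j'}\,\Bbbk[1-s_j],
\end{equation*}
and $\Ext^\bullet_Y(i^*R_j, i^*R_j)\cong\Bbbk$ whenever $j$ is not $\sigma$-fixed. Thus the $i^*R_j$ are exceptional, completely orthogonal across distinct $\sigma$-orbits, and cyclically linked by single Ext-classes within each orbit.

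To assemble the equivalence, Proposition~\ref{proposition:lefschetz-correspondence} is applied in both settings. The given Lefschetz decomposition of~$\Db(X)$ produces one of~$\cR$ with respect to~$\tau$, whose blocks $\cC_p$ are spanned by subsets of~$\{R_j\}$; the restricted Lefschetz decomposition~\eqref{eq:dby-induced}, full by hypothesis, produces a Lefschetz decomposition of~$\bcR$ with respect to~$\bar\tau$ whose blocks are $i^*\cC_{p+1}$. Tracing the $\sigma$-orbits through this correspondence, each orbit of length~$n_i$ contributes $n_i-1$ Lefschetz generators to~$\bcR$ that form a linear exceptional chain with a single Ext-arrow between consecutive members; after shifting each term to bring the arrows into degree zero, this is the canonical collection of simples in~$\Db(\rA_{n_i-1})$. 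Combined with the orthogonality across distinct orbits, one obtains the desired equivalence
\begin{equation*}
\bcR\simeq\prod_{i=1}^r\Db(\rA_{n_i-1}).
\end{equation*}
The hard part is verifying that every $\sigma$-orbit contributes exactly one $\rA$-factor (rather than splitting into several pieces) and that no orbit has length one, so that $n_i\ge 2$ and $\sum n_i=n$; this is where the assumption that the restricted Lefschetz decomposition is full is used in earnest, through the rank identity $\rk\rK_0(\bcR)=n-r$.
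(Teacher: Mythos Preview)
Your overall strategy matches the paper's proof: the same permutation-and-shifts description of~$\tau$ on~$\cR$, the same two applications of Proposition~\ref{proposition:lefschetz-correspondence} (to~$\Db(X)$ and to~$\Db(Y)$), and essentially the same Ext computation. The paper computes~$\Ext_X(R_j(1),R_{j'})$ via the mutation triangle $A\to R_j(1)\to\tau(R_j)$ and the vanishing~$\Ext_X(\cA_{m-1},\cR)=0$, while you compute the equivalent~$\Ext_X(R_j,R_{j'}(-1))$ via the twisted triangle and Serre duality; both routes give the same answer and your formula $\Ext^\bullet_Y(i^*R_j,i^*R_{j'})\cong\delta_{\sigma(j),j'}\,\Bbbk[1-s_j]$ for $j\ne j'$ is correct.

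The gap is in your last paragraph. The rank identity $\rk\rK_0(\bcR)=n-r$ holds, but it cannot rule out $\sigma$-fixed points: an orbit of length~$1$ contributes~$0$ to~$\sum_i(n_i-1)$ and also contributes no Lefschetz generator to~$\bcR$, so both sides of the identity are unaffected. Thus you have not established~$n_i\ge 2$, and without it you cannot exhibit a partition of~$n$ (rather than of~$n$ minus the number of fixed points) into parts~$\ge 2$, which is part of the assertion.

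The paper closes this gap geometrically. If some orbit has length~$1$, say~$\{R_j\}$, then your Ext formula shows~$i^*R_j$ is orthogonal to every generator~$i^*R_{s_{k,q}}$ with~$q\ge 1$, hence to all of~$\bcR$; since~$i^*R_j\in\bcR$ by Lemma~\ref{lemma:residual-restriction}, this forces~$i^*R_j=0$. But for a hyperplane section embedding~$i$, the vanishing~$i^*R_j=0$ gives~$R_j\cong R_j(-1)$ from the divisor triangle, hence every cohomology sheaf of~$R_j$ has zero-dimensional support, which is impossible for an exceptional object on a positive-dimensional smooth variety. This is where the fullness hypothesis~\eqref{eq:dby-induced} and the geometry of~$i$ are genuinely used together; the $\rK_0$-count alone does not suffice.
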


\begin{proof}
Any autoequivalence of a category generated by a \emph{completely orthogonal} exceptional collection
is a composition of a permutation and shifts of objects of the collection.
Applying this observation to the induced polarization~$\tau$ of~$\cR$
(see~\eqref{eq:tau-general}) we conclude that there is a set decomposition
\begin{equation*}
\{1,2,\dots,n\} = S_1 \sqcup S_2 \sqcup \dots \sqcup S_r
\end{equation*}
and a cyclic ordering of each of the sets $S_j$
(which encode the cycle type of the permutation) such that
\begin{equation}
\label{eq:tau}
\tau(R_{s_j}) \cong R_{s_j+1}[d_{s_j}], \qquad d_{s_j} \in \bZ,
\end{equation}
where we assume that $s_j \in S_j$ and $s_j + 1$ denotes the next element in the cyclic ordering of~$S_j$.
We set $n_j \coloneqq |S_j|$.
Note that for each~$j$ we can shift the objects~$R_{s_j}$ in such a way that~\mbox{$d_{s_j} = 0$} for all~\mbox{$s_j \in S_j$} but one.

By Proposition~\ref{proposition:lefschetz-correspondence} the Lefschetz decomposition \eqref{eq:lefschetz-general} of~$\Db(X)$
corresponds to a Lefschetz decomposition of $\cR$ with respect to~$\tau$.
Since any admissible subcategory of a category generated by a completely orthogonal exceptional collection
is generated by a subcollection, it follows that there is a linear ordering
\begin{equation*}
S_j = (s_{j,0}, s_{j,1}, \dots, s_{j,n_j-1})
\end{equation*}
compatible with the cyclic ordering defined above (i.e., $s_{j,p+1} = s_{j,p} + 1$) such that the induced Lefschetz decomposition takes the form
\begin{equation*}
\cR = \langle \cC_0, \tau(\cC_1), \dots, \tau^{m-1}(\cC_{m-1}) \rangle,
\end{equation*}
where for $0 \leq p \leq m-1$ we have
\begin{equation*}
\cC_p = \Big\langle \{ R_{s_{j,0}} \mid j \colon p < n_j \} \Big\rangle
\qquad\text{and}\qquad
\tau^p(\cC_p) = \Big\langle \{ R_{s_{j,p}} \mid j \colon p < n_j \} \Big\rangle.
\end{equation*}
Using Proposition~\ref{proposition:lefschetz-correspondence} we can rewrite~\eqref{eq:lefschetz-general} as
\begin{equation}
\label{eq:ca-j}
\cA_{p} = \langle \cC_{p}, \cA_{m-1} \rangle \quad \text{for $0 \leq p \leq m-1$}.
\end{equation}
By Lemma~\ref{lemma:lefschetz-hyperplane} the functor $i^*$ is fully faithful on~$\cA_p$ for $p \ge 1$, hence
\begin{equation*}
\bcA_p = \Big\langle \bcC_p, \bcA_{m-1} \Big\rangle,
\qquad 1 \le p \le m-1,
\end{equation*}
where
\begin{equation*}
\bcC_p = \Big\langle \{i^*{R}_{s_{j,0}} \mid j \colon p < n_j \} \Big\rangle.
\end{equation*}
Applying Proposition~\ref{proposition:lefschetz-correspondence} again, we conclude from~\eqref{eq:dby-induced} that
\begin{equation*}
\bcR = \langle \bcC_1, \bar{\tau}(\bcC_2), \dots, \bar{\tau}^{m-2}(\bcC_{m-1}) \rangle.
\end{equation*}
It will be convenient to replace this decomposition by
\begin{equation}
\label{eq:bcr}
\bcR = \langle \bar\tau(\bcC_1), \bar{\tau}^2(\bcC_2), \dots, \bar{\tau}^{m-1}(\bcC_{m-1}) \rangle,
\end{equation}
Moreover, by~\eqref{eq:tau} and Lemma~\ref{lemma:residual-restriction} we conclude that,
shifting the objects $R_{s_{j,p}}$ appropriately to kill the shifts $d_{s_{j,p}}$ for $0 \le p < n_j - 1$,
we have
\begin{equation}
\label{eq:tau-convention}
\bar\tau(i^*R_{s_{j,p}}) =
\begin{cases}
i^*R_{s_{j,p+1}}, & \text{if $0 \le p < n_j - 1$},\\
i^*R_{s_{j,0}}[d_j], & \text{if $p = n_j - 1$},
\end{cases}
\end{equation}
for some $d_j \in \bZ$, hence
\begin{equation}
\label{eq:bcc-p}
\bar\tau^p(\bcC_p) = \Big\langle \{i^*{R}_{s_{j,p}} \mid j \colon p < n_j \} \Big\rangle.
\end{equation}
In other words, $\bcR$ is generated by $i^*{R}_{s_{j,p}}$ with $1 \le j \le r$ and $1 \le p \le n_j - 1$.
It remains to compute $\Ext$'s between the objects~$i^*{R}_{s_{j,p}}$ and show that~$n_j \ge 2$ for all~$j$.

As in the proof of Lemma~\ref{lemma:residual-restriction}, by the adjunction and projection formula we have a distinguished triangle
\begin{equation*}
\Ext_X^\bullet(R_{s_{j,p}}(1), R_{s_{k,q}}) \to
\Ext_X^\bullet(R_{s_{j,p}}, R_{s_{k,q}}) \to
\Ext_Y^\bullet(i^*{R}_{s_{j,p}}, i^*{R}_{s_{k,q}}).
\end{equation*}
Let us compute the first two terms of this triangle.
\begin{enumerate}
  \item $\Ext_X^\bullet(R_{s_{j,p}}(1), R_{s_{k,q}})$: by the definition~\eqref{eq:tau-general} of $\tau$ there is a distinguished triangle
  \begin{equation*}
  A \to R_{s_{j,p}}(1) \to \tau(R_{s_{j,p}}),
  \end{equation*}
  where $A \in \cA_{m-1}$. Since by definition of~$\cR$ we have $\Ext_X^\bullet(\cA_{m-1},\cR) = 0$, we conclude that
  \begin{equation*}
  \Ext_X^\bullet(R_{s_{j,p}}(1), R_{s_{k,q}}) \cong
  \Ext_X^\bullet(\tau(R_{s_{j,p}}), R_{s_{k,q}}).
  \end{equation*}
  It follows from~\eqref{eq:tau-convention} that for $q \ge 1$ this $\Ext$-space is non-zero if and only if $j = k$ and $p = q - 1$.
  Moreover, in this case the space is $1$-dimensional and sits in degree~$0$.

  \item $\Ext_X^\bullet(R_{s_{j,p}}, R_{s_{k,q}})$: similarly this space is non-zero if and only if $j = k$ and $p = q$;
  and again, in this case the space is 1-dimensional and sits in degree~$0$.
\end{enumerate}

This proves that for $q \ge 1$ and any $p$ we have
\begin{equation}
\label{eq:ext-r-r}
\Ext_Y^\bullet(i^*{R}_{s_{j,p}}, i^*{R}_{s_{k,q}}) =
\begin{cases}
\Bbbk, & \text{if $j = k$, $p = q$},\\
\Bbbk[1], & \text{if $j = k$, $p = q - 1$},\\
0, & \text{otherwise}.
\end{cases}
\end{equation}
In particular, if $n_j = 1$ for some~$j$
then the object $i^*R_{s_{j,0}}$ is orthogonal to $i^*R_{s_{k,q}}$ with $q \ge 1$ and any~$k$,
hence by~\eqref{eq:bcr} and~\eqref{eq:bcc-p} to the entire category~$\bcR$.
Since by Lemma~\ref{lemma:residual-restriction} we have $i^*R_{s_{j,0}} \in \bcR$, it follows that~$i^*R_{s_{j,0}} = 0$.
On the other hand, since~$i$ is the embedding of a hyperplane section, it follows that $R_{s_{j,0}}$ has 0-dimensional support.
It is easy to see that for an exceptional object this is impossible; this proves that $n_j \ge 2$ for each~$j$.

Finally, it follows from~\eqref{eq:ext-r-r} that
the subcategory $\bcR_j \subset \bcR$ generated by~$i^*{R}_{s_{j,p}}$
with fixed~$j$ and $1 \le p \le n_j-1$ is equivalent to the derived category of the quiver $\rA_{n_j-1}$
(with the equivalence defined by sending the object~$i^*{R}_{s_{j,p}}[-2p]$ to the simple object of the $p$-th vertex of the quiver)
and that the subcategories~$\bcR_j$ and~$\bcR_k$ are completely orthogonal for~$j \ne k$.
Since by~\eqref{eq:bcr} and~\eqref{eq:bcc-p} the objects~$i^*{R}_{s_{j,p}}$ generate~$\bcR$, the theorem follows.
\end{proof}

\begin{remark}
\label{remark:simples}
It follows from~\eqref{eq:ext-r-r}
that for each $j$ the object $i^*{R}_{s_{j,0}}$ belongs to the component~$\bcR_j$ of the residual category~$\bcR$
and that under the constructed equivalence of categories~\mbox{$\bcR_j \cong \Db(\rA_{n_j-1})$}
it corresponds (up to shift) to the projective module of the first vertex of the quiver.

Moreover, one can identify the autoequivalence of~$\Db(\rA_{n_j-1})$ corresponding to~$\bar\tau$.
Recall from~\cite[Theorem~0.1(2) and Table~I]{MR1868359} that the group of autoequivalences of~$\Db(\rA_{n_j-1})$
is generated by the shift functor and the \emph{Auslander--Reiten translation $\tau_{\mathrm{AR}}$} that acts by
\begin{equation*}
\rS_{j,1} \mapsto \rS_{j,2} \mapsto \dots \mapsto \rS_{j,n_j-1} \mapsto \rP_{j,1}[-1],
\end{equation*}
where $\rS_{j,p}$ and $\rP_{j,p}$ are the simple and projective modules of the $p$-th vertex of the quiver~$\rA_{n_j-1}$.
Using \eqref{eq:tau-convention} we conclude that
\begin{equation*}
\bar\tau\vert_{\bcR_j} \cong \tau_{\mathrm{AR}} \circ [2].
\end{equation*}
\end{remark}

\begin{remark}
\label{remark:hyperplane-quantum}
We expect that an analogue of Theorem~\ref{theorem:restriction-general} exists for quantum cohomology,
where the role of the residual categories is played by certain decomposition factors of the small quantum cohomology ring,
denoted by $\kappa^{-1}(0)$ in~\cite[Conjecture~1.1]{KSnew}.
The derived categories of Dynkin quivers of type~$\rA$ in the residual category of a hyperplane section
should correspond to Milnor algebras of isolated hypersurface singularities of type~$\rA$ appearing in the quantum cohomology.
The hyperplane sections
\begin{equation*}
\IG(2,2n) \subset \G(2,2n), \,\,
\rF_4/\rP_4 \subset \rE_6/\rP_1
\quad\text{and}\quad
\Fl(1,n;n+1) \subset \bP^n \times \bP^n
\end{equation*}
provide some evidence for this expectation (see~\cite{CMKMPS,KSnew,Pe,PeSm}).
\end{remark}

\section{The Cayley plane}
\label{section:residual-category-cayley-plane}

Let $\rG$ be the simple simply connected algebraic group of Dynkin type~$\rE_6$.
Let~\mbox{$\rP_1 \subset \rG$} be the maximal parabolic subgroup associated with the first vertex of its Dynkin diagram,
where we use the following numbering
\begin{equation}
\label{diagram:e6}
\dynkin[edge length = 2em,labels*={1,...,6}]E6.
\end{equation}
Let
\begin{equation*}
X = \rG/\rP_1
\end{equation*}
be the \emph{Cayley plane}.
This is the smallest homogeneous variety of the group~$\rG$;
it is the \emph{\textup(co\textup)minuscule} Grassmannian of type~$\rE_6$.
We have
\begin{equation}
\label{eq:omega-x}
\omega_X \cong \cO_X(-12),
\qquad
\dim X = 16.
\end{equation}

The fundamental weights of the group~$\rG$ are denoted~$\omega_1,\ldots,\omega_6$ as in Appendix~\ref{appendix:weights}.
Let~\mbox{$V = V^{\omega_1}_\rG$} be the fundamental representation of~$\rG$ associated with the first vertex of the Dynkin diagram.
Then $\dim V = 27$ and
\begin{equation*}
X \subset \bP(V^\vee)
\end{equation*}
is the orbit under $\rG$ of the highest weight vector. We denote by $\cO_X(1)$ the line bundle on~$X$ corresponding to the above projective embedding.

\subsection{Vector bundles on \texorpdfstring{$X$}{X}}
\label{subsection:e6-vb}

In what follows we denote by $V^{\lambda}_{\rG}$ the irreducible representation of~$\rG$ with highest weight~$\lambda$.
Similarly, we denote by $\cU^{\lambda}$ the $\rG$-equivariant vector bundle on~$X$ associated with the irreducible representation
with the highest weight~$\lambda$ of the Levi group~$\rL$ of the parabolic~$\rP_1$.
Note that
\begin{equation*}
\cU^{t\omega_1} \cong \cO_X(t)
\end{equation*}
for all $t \in \bZ$.

Following~\cite{Manivel, FM} we consider the triple of irreducible $\rG$-equivariant vector bundles on~$X$
\begin{equation}
\label{def:cei}
\cE_0 = \cO_X,
\qquad
\cE_1 = (\cU^{\omega_6})^\vee,
\qquad
\cE_2 = (\cU^{2\omega_6})^\vee.
\end{equation}
Note that the rank of~$\cE_1$ is~10 and the rank of~$\cE_2$ is~54.
The main result of~\cite{FM} is the following construction of a full Lefschetz collection in~$\Db(X)$. Let
\begin{equation}
\label{def:cai}
\cA_p =
\begin{cases}
\langle \cO_X, \cE_1^\vee, \cE_2^\vee \rangle, 	& 0 \le p \le 2\\
\langle \cO_X, \cE_1^\vee \rangle, 		& 3 \le p \le 11.\\
\end{cases}
\end{equation}

The following result is essentially due to Faenzi--Manivel \cite{FM},
we just rearrange their exceptional collection slightly for convenience of further use.

\begin{theorem}[\cite{FM}]
\label{theorem:fm}
There is a semiorthogonal decomposition
\begin{equation}
\label{eq:cai}
\Db(X) = \langle \cA_0, \cA_1(1), \dots, \cA_{11}(11) \rangle.
\end{equation}
\end{theorem}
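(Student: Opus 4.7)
The plan is to deduce the statement directly from the construction in~\cite{FM}, treating it as a reorganization rather than a fresh construction. Faenzi and Manivel produce a full Lefschetz exceptional collection on~$X$ consisting of the same $3\cdot 3 + 9 \cdot 2 = 27$ equivariant bundles $\cO_X(t), \cE_1^\vee(t), \cE_2^\vee(t)$, with the $\cE_2^\vee$-twists appearing only for $t = 0, 1, 2$. Since the total number of exceptional objects and the block structure match the proposed decomposition~\eqref{eq:cai}, and since fullness is preserved under any reordering that still yields a semiorthogonal collection, it suffices to verify that the particular ordering encoded in the chain~\eqref{def:cai} is semiorthogonal.

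First I would verify that $(\cO_X, \cE_1^\vee, \cE_2^\vee)$ is an exceptional triple on~$X$, i.e.\ that
\begin{equation*}
\Ext^\bullet_X(\cE_i^\vee, \cE_j^\vee) = 0 \qquad \text{for } i > j,
\end{equation*}
and that each bundle is exceptional. Using~\eqref{def:cei} each such Ext-space equals a direct sum of cohomology groups $\HH^\bullet(X, \cU^\mu)$ for weights $\mu$ appearing in the irreducible decomposition of $\cU^{\omega_6} \otimes \cU^{2\omega_6}$, $\cU^{\omega_6} \otimes \cU^{-2\omega_6}$, etc., under the action of the Levi subgroup of~$\rP_1$. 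Each vanishing follows by Borel--Weil--Bott, using the weight data collected in Appendix~\ref{appendix:weights}.

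Second, I would check the cross-block semiorthogonality: for $0 \le q < p \le 11$ and $F \in \cA_p$, $G \in \cA_q$, the vanishing
\begin{equation*}
\Ext^\bullet_X(F(p), G(q)) = \HH^\bullet(X, F^\vee \otimes G \otimes \cO_X(q-p)) = 0
\end{equation*}
needs to hold for $p - q \in \{1, \dots, 11\}$. Each $F^\vee \otimes G$ is a direct sum of the irreducible equivariant bundles appearing in the tensor products $\cO_X$, $\cU^{\pm\omega_6}$, $\cU^{\pm 2\omega_6}$, $\cU^{\omega_6 - 2\omega_6}$, $\cU^{2\omega_6 - \omega_6}$, all of which decompose explicitly. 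After twisting by $\cO_X(q-p) = \cU^{(q-p)\omega_1}$, Bott's theorem reduces the required vanishings to checking that certain weight-plus-$\rho$ vectors are either singular or have no regular representative of total cohomological degree inside the range of $\dim X = 16$.

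The main obstacle is the sheer volume of bookkeeping: there are many $(p,q)$ pairs and many irreducible constituents to track. However, all the relevant cohomology computations are either carried out explicitly in~\cite{Manivel,FM} or reduce to them via Serre duality $\omega_X \cong \cO_X(-12)$; the reordering from the original Faenzi--Manivel presentation to the one in~\eqref{def:cai} only permutes objects within individual blocks, and such permutations are automatically valid whenever the within-block Ext-vanishings just verified hold. Once both the within-block and cross-block vanishings are established, fullness of~\eqref{eq:cai} follows from fullness of the original Faenzi--Manivel collection since both generate the same subcategory of~$\Db(X)$.
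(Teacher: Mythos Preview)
Your proposal rests on a misreading of~\cite{FM}. The Faenzi--Manivel collection is built from the bundles~$\cE_i$, not~$\cE_i^\vee$: it reads
\[
(\cE_2,\cE_1,\cO_X;\ \cE_2(1),\cE_1(1),\cO_X(1);\ \cE_2(2),\cE_1(2),\cO_X(2);\ \cE_1(3),\cO_X(3);\ \dots;\ \cE_1(11),\cO_X(11)).
\]
Even after using the isomorphisms~\eqref{eq:ce1-iso} and~\eqref{eq:ce2-iso}, namely $\cE_1^\vee\cong\cE_1(1)$ and $\cE_2^\vee\cong\cE_2(2)$, the target collection~\eqref{eq:cai} contains $\cE_1(1),\dots,\cE_1(12)$ and $\cE_2(2),\cE_2(3),\cE_2(4)$, whereas the Faenzi--Manivel collection contains $\cE_1(0),\dots,\cE_1(11)$ and $\cE_2(0),\cE_2(1),\cE_2(2)$. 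These are \emph{not} the same 27 objects, so your claim that ``the reordering \dots\ only permutes objects within individual blocks'' is false, and with it your fullness argument (``both generate the same subcategory'') collapses. Verifying semiorthogonality of the new collection by Borel--Weil--Bott would still leave you with no argument for fullness.

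The paper's proof avoids this entirely by using three operations that each preserve both fullness and semiorthogonality of an exceptional collection: (i)~dualization, which takes $(E_1,\dots,E_n)$ to $(E_n^\vee,\dots,E_1^\vee)$; (ii)~mutation of an initial segment past the rest, which by Serre duality and~\eqref{eq:omega-x} amounts to a twist by~$\cO_X(12)$; and (iii)~a global twist by~$\cO_X(2)$. Composing these transforms the Faenzi--Manivel collection into~\eqref{eq:cai} in three lines, with no Borel--Weil--Bott computation needed beyond what is already in~\cite{FM}.
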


\begin{proof}
By~\cite{FM} the category~$\Db(X)$ has a full exceptional collection
\begin{multline*}
(\cE_2,\cE_1,\cO_X;\, \cE_2(1),\cE_1(1),\cO_X(1);\, \cE_2(2),\cE_1(2),\cO_X(2);
\cE_1(3),\cO_X(3);\, \dots;\, \cE_1(11),\cO_X(11)).
\end{multline*}
Using dualization we obtain the full exceptional collection
\begin{multline*}
(\cO_X(-11),\cE_1^\vee(-11);\, \dots;\, \cO_X(-3),\cE_1^\vee(-3); \\
\cO_X(-2),\cE_1^\vee(-2),\cE_2^\vee(-2);\, \cO_X(-1),\cE_1^\vee(-1),\cE_2^\vee(-1);\, \cO_X,\cE_1^\vee,\cE_2^\vee).
\end{multline*}
Mutating the first~18 bundles (i.e., the first line above)
to the right of the last~9 bundles (the second line) we get the full exceptional collection
\begin{multline*}
(\cO_X(-2),\cE_1^\vee(-2),\cE_2^\vee(-2);\, \cO_X(-1),\cE_1^\vee(-1),\cE_2^\vee(-1);\, \cO_X,\cE_1^\vee,\cE_2^\vee; \\
\cO_X(1),\cE_1^\vee(1);\, \dots;\, \cO_X(9),\cE_1^\vee(9)).
\end{multline*}
Finally, after the $\cO_X(2)$-twist we get~\eqref{eq:cai}.
\end{proof}

Now we discuss several properties of the bundles $\cE_1$ and $\cE_2$ that will become useful later.

\begin{lemma}
\label{lemma:ce1-embedding}
There is a $\rG$-equivariant embedding of vector bundles
\begin{equation*}
\cE_1 \hookrightarrow V \otimes \cO_X.
\end{equation*}
\end{lemma}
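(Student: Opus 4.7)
The plan is to construct the embedding by dualizing a global generation statement for the bundle~$\cU^{\omega_6}$. The crucial observation is that, although $\cU^{\omega_6}$ is defined using only the Levi structure, the weight~$\omega_6$ happens to be dominant for the whole group~$\rG$, which makes Borel--Weil directly applicable.

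First I would compute $\HH^0(X,\cU^{\omega_6})$. Since $\omega_6$ is a fundamental weight of~$\rG$ (and in particular a $\rG$-dominant weight that is also dominant for the Levi $\rL$ of $\rP_1$), Borel--Weil yields
\begin{equation*}
\HH^0(X,\cU^{\omega_6}) \cong V^{\omega_6}_\rG.
\end{equation*}
The non-trivial diagram automorphism of~$\rE_6$ exchanges nodes $1$ and $6$, and agrees with the action of $-w_0$; therefore $V^{\omega_6}_\rG \cong (V^{\omega_1}_\rG)^\vee = V^\vee$ as $\rG$-modules.

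Second, I would verify that the evaluation morphism
\begin{equation*}
\mathrm{ev}\colon V^\vee \otimes \cO_X \longrightarrow \cU^{\omega_6}
\end{equation*}
is surjective. It is $\rG$-equivariant and non-zero (else $\HH^0(X,\cU^{\omega_6})$ would vanish, contradicting the previous step). At the base point $e\rP_1 \in X$, the fiber of~$\cU^{\omega_6}$ is, by construction, the irreducible $\rL$-representation of highest weight~$\omega_6$, and the restriction of~$\mathrm{ev}$ at that point is a non-zero $\rP_1$-equivariant map into this irreducible $\rL$-module. Hence the fiber map is surjective at~$e\rP_1$, and by $\rG$-equivariance and transitivity of the $\rG$-action it is surjective everywhere. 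Dualising then gives the desired $\rG$-equivariant embedding $\cE_1 = (\cU^{\omega_6})^\vee \hookrightarrow V \otimes \cO_X$.

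There is no serious obstacle here: the only point that requires care is the identification of~$V^{\omega_6}_\rG$ with $V^\vee$ (via the Dynkin diagram automorphism), but this is a standard feature of type~$\rE_6$ and is in any case compatible with the appendix on weight lattice computations referenced by the paper. An alternative, essentially equivalent, route would be to exhibit the $\rP_1$-stable filtration of $V^{\omega_1}|_{\rP_1}$ directly (with graded pieces of dimensions $1, 16, 10$ corresponding to the trivial, half-spin and vector representations of the Levi $\Gm \times \Spin(10)$) and identify the $10$-dimensional $\rP_1$-subrepresentation with the fiber of~$\cE_1$; but the Borel--Weil argument above seems cleaner and more transparent.
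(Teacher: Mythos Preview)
Your proof is correct and follows essentially the same route as the paper: compute $\HH^0(X,\cU^{\omega_6}) \cong V^{\omega_6}_\rG \cong V^\vee$ via Borel--Weil and the diagram involution, observe that the evaluation map is surjective (the paper phrases this simply as ``$\cE_1^\vee$ corresponds to a dominant weight of~$\rG$, hence it is globally generated''), and dualize. Your fiber-by-fiber justification of surjectivity is more explicit than the paper's one-line appeal to global generation of dominant-weight bundles, but the argument is the same.
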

\begin{proof}
The bundle $\cE_1^\vee$ by definition corresponds to a dominant weight of~$\rG$, hence it is globally generated.
Moreover, by the Borel--Weil--Bott theorem
\begin{equation*}
\HH^0(X,\cE_1^\vee) =
\HH^0(X,\cU^{\omega_6}) =
V^{\omega_6}_\rG \cong
V^\vee,
\end{equation*}
and the latter isomorphism is a particular case of the general property of representations of~$\rG$ ---
the highest weight of the dual representation is obtained by the folding involution (pictured in~\eqref{diagram:folding}).
Thus, we have an equivariant epimorphism~\mbox{$V^\vee \otimes \cO_X \twoheadrightarrow \cE_1^\vee$}.
Dualizing it we obtain the required embedding.
\end{proof}

\begin{lemma}[\cite{Manivel,FM}]
We have a $\rG$-equivariant direct sum decomposition
\begin{equation}
\label{eq:ce2-sum}
\Sym^2\cE_1 \cong \cE_2 \oplus \cO_X(-1).
\end{equation}
The projection to the second summand of~\eqref{eq:ce2-sum} defines
a $\rG$-equivariant quadratic form $\bq \colon \Sym^2\cE_1 \to \cO_X(-1)$;
it induces $\rG$-equivariant isomorphisms
\begin{equation}
\label{eq:ce1-iso}
\cE_1^\vee \cong \cE_1(1)
\end{equation}
and
\begin{equation}
\label{eq:ce2-iso}
\cE_2^\vee \cong \cE_2(2).
\end{equation}
\end{lemma}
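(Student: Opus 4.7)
The plan is to reduce all three assertions to a single decomposition of $\Sym^2$ of a Levi representation, using two standard principles: any $\rG$-equivariant vector bundle on $X = \rG/\rP_1$ decomposes as a sum of irreducible bundles of the form $\cU^\lambda$ (associated with irreducible $\rL$-representations), and $\rG$-equivariant morphisms between such bundles are zero or scalar multiples of isomorphisms by Frobenius reciprocity and Schur's lemma.

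First I would decompose $\Sym^2 V^{\omega_6}_\rL$ as an $\rL$-representation. Removing vertex~$1$ from the Dynkin diagram~\eqref{diagram:e6} yields a diagram of type~$\rD_5$, so the semisimple part of~$\rL$ is $\Spin(10)$ and $V^{\omega_6}_\rL$ is the $10$-dimensional vector representation endowed with its invariant quadratic form. The classical decomposition of the symmetric square of the vector representation then reads
\begin{equation*}
\Sym^2 V^{\omega_6}_\rL \cong V^{2\omega_6}_\rL \oplus V^{\omega_1}_\rL,
\end{equation*}
where the first summand consists of the traceless symmetric tensors (of dimension~$54$) and the second is the line spanned by the invariant form. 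The character by which the centre of~$\rL$ acts on the second summand equals twice the restriction of~$\omega_6$; pairing with the coweight~$\omega_1^\vee$ via the inverse Cartan matrix of~$\rE_6$ (cf.~Appendix~\ref{appendix:weights}) identifies this character with~$\omega_1$, so the second summand really is~$V^{\omega_1}_\rL$. Dualising this decomposition produces~\eqref{eq:ce2-sum}.

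The projection of $\Sym^2\cE_1$ onto the second summand is, by definition, the $\rG$-equivariant quadratic form~$\bq$, and by adjunction it provides a non-zero $\rG$-equivariant morphism $\cE_1 \to \cE_1^\vee(-1)$. Both source and target are $\rG$-equivariant bundles associated with irreducible $\rL$-representations, so Schur's lemma forces this morphism to be a scalar multiple of an isomorphism, which proves~\eqref{eq:ce1-iso}.

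Finally, combining~\eqref{eq:ce1-iso} with~\eqref{eq:ce2-sum} gives
\begin{equation*}
\cE_2^\vee \oplus \cO_X(1) \cong (\Sym^2\cE_1)^\vee \cong \Sym^2(\cE_1^\vee) \cong \Sym^2(\cE_1(1)) \cong (\Sym^2\cE_1)(2) \cong \cE_2(2) \oplus \cO_X(1),
\end{equation*}
and cancelling the common summand~$\cO_X(1)$ by Krull--Schmidt for indecomposable $\rG$-equivariant bundles yields~\eqref{eq:ce2-iso}. The only non-formal step in the whole argument is the weight-lattice computation pinning down the central character on the trivial $\Spin(10)$-summand of $\Sym^2 V^{\omega_6}_\rL$; once this bookkeeping (carried out in Appendix~\ref{appendix:weights}) is in hand, all remaining steps are immediate.
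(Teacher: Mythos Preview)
Your argument is correct and matches the paper's approach: for~\eqref{eq:ce1-iso} and~\eqref{eq:ce2-iso} the proofs are identical (Schur's lemma applied to the adjoint morphism, then symmetric square and cancellation), while for~\eqref{eq:ce2-sum} you spell out the $\Spin(10)$-decomposition of $\Sym^2 V_\rL^{\omega_6}$ that the paper simply cites from~\cite[(1)]{FM}. One minor quibble: the central-character identification you attribute to Appendix~\ref{appendix:weights} is not actually carried out there, though the required check that $2\omega_6 - \omega_1$ lies in the root lattice of~$\rL$ is a routine inverse Cartan matrix computation.
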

\begin{proof}
The decomposition~\eqref{eq:ce2-sum} follows from~\cite[(1)]{FM}.

To prove~\eqref{eq:ce1-iso} note that the embedding $\cO_X(-1) \hookrightarrow \Sym^2\cE_1$
induces a non-trivial $\rG$-equi\-va\-ri\-ant morphism $\cE_1^\vee(-1) \to \cE_1$.
But any non-trivial equivariant morphism between irreducible equivariant bundles is an isomorphism.

To prove~\eqref{eq:ce2-iso} we take the symmetric square of the isomorphism~\eqref{eq:ce1-iso} and use~\eqref{eq:ce2-sum}.
\end{proof}

Recall that there is a unique (up to rescaling) $\rG$-invariant cubic form
\begin{equation*}
\Ca \in \Sym^3V^\vee
\end{equation*}
(which is sometimes called the {\em Cartan cubic}),
see~\cite[\S2.1]{IM} and~\cite[Theorem~3.3]{kim2019explicit} for an explicit formula.
Below we often consider $\Ca$ as a symmetric trilinear form.

Consider the associated morphism of vector bundles
\begin{equation}
\label{def:cartan}
V \otimes \cO_{\bP(V)} \xrightarrow{\ \Ca\ } V^\vee \otimes \cO_{\bP(V)}(1)
\end{equation}
on~$\bP(V)$ defined at point $y \in \bP(V)$ by $v \mapsto \Ca(v,y) {} \in V^\vee$.
Note that the morphism~\eqref{def:cartan} is symmetrically self-dual (up to twist), since the trilinear form~$\Ca$ is symmetric. Furthermore, the map~\eqref{def:cartan} is generically invertible over~$\bP(V)$, see~\cite[proof of Proposition~2.5]{IM}.

Denote by
\begin{equation*}
\bh \in \HH^0(X \times \bP(V), \cO_X(1) \boxtimes \cO_{\bP(V)}(1))
\end{equation*}
the restriction of the equation of the universal hyperplane to
\begin{equation*}
X \times \bP(V) \subset \bP(V^\vee) \times \bP(V).
\end{equation*}

\begin{lemma}
Consider the product $X \times \bP(V)$.
The diagram
\begin{equation}
\label{diagram:cartan-bq}
\vcenter{\xymatrix@C=-2em{
&
V \otimes \cO_X \boxtimes \cO_{\bP(V)} \ar[rr]^-{\ \operatorname{id} \boxtimes \Ca\ } &&
V^\vee \otimes \cO_X \boxtimes \cO_{\bP(V)}(1) \ar[dr]
\\
\cE_1 \boxtimes \cO_{\bP(V)} \ar[ur] \ar[rr]^-{\bq \boxtimes \operatorname{id}} &&
\cE_1^\vee(-1) \boxtimes \cO_{\bP(V)} \ar[rr]^-{\bh} &&
\cE_1^\vee \boxtimes \cO_{\bP(V)}(1)
}}
\end{equation}
\textup(where the left diagonal arrow is the embedding of Lemma~\textup{\ref{lemma:ce1-embedding}}
and the right diagonal arrow is its dual\textup)
is commutative up to rescaling.
\end{lemma}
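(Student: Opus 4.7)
The plan is to argue by representation-theoretic rigidity: both compositions in~\eqref{diagram:cartan-bq} are $\rG$-equivariant morphisms $\cE_1 \boxtimes \cO_{\bP(V)} \to \cE_1^\vee \boxtimes \cO_{\bP(V)}(1)$ living in a one-dimensional space, and each is non-zero.

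Every arrow of~\eqref{diagram:cartan-bq} is $\rG$-equivariant, so both compositions are $\rG$-equivariant. Moreover, viewed as sections of $\cE_1^\vee \otimes \cE_1^\vee \boxtimes \cO_{\bP(V)}(1)$, both lie in the symmetric summand $\Sym^2\cE_1^\vee \boxtimes \cO_{\bP(V)}(1)$: for the bottom composition this is immediate from the fact that $\bq$ is a quadratic form and $\bh$ is a scalar; for the top composition it follows from the symmetry of the Cartan cubic as a trilinear form, so that $\Ca(y, \cdot, \cdot)$ is a symmetric bilinear form on $V$, combined with the observation that the right diagonal of~\eqref{diagram:cartan-bq} is the transpose of the left one.

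Next, I would compute the space of $\rG$-equivariant sections of $\Sym^2\cE_1^\vee \boxtimes \cO_{\bP(V)}(1)$ on $X \times \bP(V)$. By the K\"unneth formula, this equals $(\HH^0(X, \Sym^2\cE_1^\vee) \otimes V^\vee)^{\rG}$. Dualizing~\eqref{eq:ce2-sum} gives $\Sym^2\cE_1^\vee \cong \cE_2^\vee \oplus \cO_X(1)$, and Borel--Weil--Bott yields
\[
\HH^0(X, \cE_2^\vee) \cong V^{2\omega_6}_{\rG}, \qquad \HH^0(X, \cO_X(1)) \cong V^{\omega_1}_{\rG}.
\]
Using $V^\vee \cong V^{\omega_6}_{\rG}$ and Schur's lemma, the summand $V^{2\omega_6}_{\rG} \otimes V^{\omega_6}_{\rG}$ has no $\rG$-invariants (the irreducibles $V^{2\omega_6}_{\rG}$ and $V^{\omega_1}_{\rG}$ being non-isomorphic), whereas $V^{\omega_1}_{\rG} \otimes V^{\omega_6}_{\rG}$ has a one-dimensional invariant space. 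Hence the space of symmetric $\rG$-equivariant morphisms in question is one-dimensional.

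Finally, both compositions are non-zero. The bottom composition equals $\bh(x, [y]) \cdot \bq_x$ at $(x, [y])$, which is non-zero wherever $\bh$ does not vanish, since $\bq_x$ is non-degenerate by~\eqref{eq:ce1-iso}. The top composition is also non-zero: if it vanished, then $\Ca(y, v, w) = 0$ would hold for all $y \in V$ and all $v, w$ in any fiber $\cE_{1, x}$, which can be ruled out by a direct fiberwise computation at the $\rP_1$-fixed point $x_0 \in X$ using the explicit description of $\Ca$ from~\cite[Theorem~3.3]{kim2019explicit}. The two non-zero compositions therefore differ by a non-zero scalar, proving the lemma. The main obstacle of this approach is precisely this last non-vanishing: while it follows in principle from the non-triviality of $\Ca$ and the $\rG$-equivariance of $\cE_1$, a clean proof likely requires either a fiberwise calculation at $x_0$ or invoking the exceptional Jordan algebra structure on~$V$ underlying the Cayley plane.
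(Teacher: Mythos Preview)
Your approach is essentially the same as the paper's: both observe that the two compositions are $\rG$-equivariant and symmetrically self-dual, then compute via~\eqref{eq:ce2-sum} and Borel--Weil--Bott that the trivial representation occurs in $\HH^0(X\times\bP(V),\Sym^2\cE_1^\vee\boxtimes\cO_{\bP(V)}(1))\cong (V^{2\omega_6}_\rG\oplus V)\otimes V^\vee$ with multiplicity one, forcing proportionality. The paper stops there---proportionality is all that ``commutative up to rescaling'' requires---and defers non-vanishing to a separate remark; so the fiberwise computation you flag as the main obstacle is not actually needed for the lemma as stated.
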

\begin{proof}
The morphisms in~\eqref{diagram:cartan-bq} are $\rG$-equivariant, hence so are their compositions.
Moreover, the compositions are symmetrically self-dual (because~$\Ca$ and $\bq$ are),
and using~\eqref{eq:ce2-sum} it is easy to see that the space
\begin{equation*}
\begin{aligned}
\HH^0(X \times \bP(V),\Sym^2\cE_1^\vee \boxtimes \cO_{\bP(V)}(1)) &\cong
\HH^0(X,\cU^{2\omega_6} \oplus \cO_X(1)) \otimes \HH^0(\bP(V), \cO_{\bP(V)}(1)) \\ &\cong
(V^{2\omega_6}_{\rG} \oplus V) \otimes V^\vee
\end{aligned}
\end{equation*}
of symmetrically self-dual maps $\cE_1 \boxtimes \cO_{\bP(V)} \to \cE_1^\vee \boxtimes \cO_{\bP(V)}(1)$
contains the trivial representation with multiplicity~1.
Therefore, the two compositions are proportional to each other.
\end{proof}

\begin{remark}
It is easy to show that the two compositions in~\eqref{diagram:cartan-bq} are non-zero;
in particular, rescaling~$\bq$ or~$\Ca$ appropriately we may assume that~\eqref{diagram:cartan-bq} is commutative on the nose.
\end{remark}

\begin{lemma}
\label{lemma:wedge2-ce1}
The bundle $\wedge^2\cE_1^\vee(1)$ belongs to the subcategory
\begin{equation*}
{}^\perp (\cA_0(-1)) = \langle \cA_1, \cA_2(1), \dots, \cA_{11}(10) \rangle.
\end{equation*}
\end{lemma}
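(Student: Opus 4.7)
The plan is a direct application of Borel--Weil--Bott. Since $\cA_0(-1) = \langle \cO_X(-1), \cE_1^\vee(-1), \cE_2^\vee(-1)\rangle$, by definition of the left orthogonal the statement amounts to the vanishing of $\Ext_X^\bullet(\wedge^2\cE_1^\vee(1), \cF)$ for $\cF \in \{\cO_X(-1), \cE_1^\vee(-1), \cE_2^\vee(-1)\}$. For locally free sheaves $\Ext_X^\bullet(\cF_1,\cF_2) \cong \HH^\bullet(X, \cF_1^\vee \otimes \cF_2)$, and together with $(\wedge^2\cE_1^\vee)^\vee \cong \wedge^2\cE_1$ this rewrites the three $\Ext$-vanishings as
\begin{equation*}
\HH^\bullet(X, \wedge^2\cE_1(-2)) = \HH^\bullet(X, \wedge^2\cE_1 \otimes \cE_1^\vee(-2)) = \HH^\bullet(X, \wedge^2\cE_1 \otimes \cE_2^\vee(-2)) = 0.
\end{equation*}

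Each of the bundles on the left-hand side is $\rG$-equivariant and splits as a direct sum of irreducibles $\cU^\mu$. The semisimple part of the Levi of $\rP_1$ is $\Spin(10)$, and under this identification $\cE_1^\vee \cong \cU^{\omega_6}$ corresponds to the vector representation of $\Spin(10)$, $\wedge^2\cE_1$ corresponds to the adjoint representation of $\mathfrak{so}(10)$, and $\cE_2^\vee \cong \cU^{2\omega_6}$ corresponds to the traceless symmetric square of the vector representation. The decompositions of $\wedge^2\cE_1 \otimes \cE_1^\vee$ and $\wedge^2\cE_1 \otimes \cE_2^\vee$ into irreducibles are then routine Pieri-type computations for $\mathfrak{so}(10)$.

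Finally, apply Borel--Weil--Bott to each summand $\cU^\mu(-2) = \cU^{\mu - 2\omega_1}$: the cohomology vanishes as soon as $\mu - 2\omega_1 + \rho_\rG$ lies on some root hyperplane. I expect the shifted weight to be singular for every $\mu$ arising in any of the three decompositions, which yields the three required vanishings and hence the lemma. The main obstacle is combinatorial bookkeeping: one must enumerate every summand --- the decomposition of $\wedge^2\cE_1\otimes\cE_2^\vee$ in particular will produce a nontrivial list --- and verify singularity of the corresponding shifted weight. The weight-lattice material collected in Appendix~\ref{appendix:weights} is designed precisely for this purpose.
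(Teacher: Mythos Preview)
Your approach is correct and is essentially what the paper does: it simply cites \cite[Lemma~5]{FM}, where Faenzi and Manivel carry out precisely this Borel--Weil--Bott computation --- decompose the relevant tensor products over the $\rD_5$ Levi and verify that every resulting shifted weight $\mu+\rho_\rG$ is singular. One minor correction: Appendix~\ref{appendix:weights} concerns the restriction of weights from $\rE_6$ to $\rF_4$ and is not what you need here; the singularity checks take place entirely in the $\rE_6$ root system.
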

\begin{proof}
This follows from \cite[Lemma~5]{FM}.
\end{proof}

\subsection{Residual category}
\label{subsection:residual-e6}

In this section we compute the residual category of the Lefschetz collection~\eqref{def:cai}.
We follow the strategy of~\cite[\S9]{CMKMPS}.
Recall from~\cite[Lemma 4]{FM} that there exists an exact sequence
\begin{multline}
\label{eq.:staircase-complex-e6-p1}
0 \to \cE_2(-1) \to V \otimes \cE_1(-1) \to W \otimes \cO_X(-1)
  \to W^\vee \otimes \cO_X \to  V^\vee \otimes \cE_1^\vee \to \cE_2^\vee \to 0,
\end{multline}
where
\begin{equation*}
W = \wedge^2V \oplus V^\vee.
\end{equation*}
Note that this complex is self-dual up to twist.

Define the sheaves $F_i$ for $0 \leq i \leq 2$ by the following left truncations
\begin{equation}
\label{eq.:left-resolutions-of-Fi}
  \begin{aligned}
    & 0 \to \cE_2(-1) \to V \otimes \cE_1(-1) \to W \otimes \cO_X(-1) \to F_0 \to 0, \\
    & 0 \to \cE_2(-1) \to V \otimes \cE_1(-1) \to F_1 \to 0, \\
    & 0 \to \cE_2(-1) \to F_2 \to 0
  \end{aligned}
\end{equation}
of~\eqref{eq.:staircase-complex-e6-p1}.
Since~\eqref{eq.:staircase-complex-e6-p1} is exact,
its right truncations provide alternative resolutions of the sheaves~$F_i$:
\begin{equation}
\label{eq.:right-resolutions-of-Fi}
\begin{aligned}
  & 0 \to F_0 \to W^\vee \otimes \cO_X \to  V^\vee \otimes \cE_1^\vee \to \cE_2^\vee \to 0, \\
  & 0 \to F_1 \to W \otimes \cO_X(-1) \to W^\vee \otimes \cO_X \to  V^\vee \otimes \cE_1^\vee \to \cE_2^\vee \to 0, \\
  & 0 \to F_2 \to V \otimes \cE_1(-1) \to W \otimes \cO_X(-1)
   \to W^\vee \otimes \cO_X \to  V^\vee \otimes \cE_1^\vee \to \cE_2^\vee \to 0.
\end{aligned}
\end{equation}
Note that it follows from~\eqref{eq.:right-resolutions-of-Fi} that the sheaves $F_i$ are locally free.

The key computation is given in the next lemma.

\begin{lemma}
\label{lemma:projection-e6}
Set $\cB = \langle \cO_X, \cE_1^\vee \rangle$.
We have
\begin{equation}
\label{eq:fi}
\bL_{\langle \cB, \dots, \cB(i) \rangle} (\cE_2^\vee(i)) = F_i (i) [2+i] \quad \text{for} \quad 0 \leq i \leq 2,
\end{equation}
where $\bL$ stands for the left mutation functor.
\end{lemma}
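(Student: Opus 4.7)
\emph{Plan of the proof.} The plan is to construct, for each~$i$, a distinguished triangle~\mbox{$K_i \to \cE_2^\vee(i) \to F_i(i)[i+2]$} in which~$K_i$ lies in~$\langle \cB, \ldots, \cB(i)\rangle$ and~$F_i(i)$ lies in~$\langle \cB, \ldots, \cB(i)\rangle^\perp$. Together these identify~$F_i(i)[i+2]$ with the left mutation~$\bL_{\langle \cB, \ldots, \cB(i)\rangle}\cE_2^\vee(i)$. The first assertion will be read off~\eqref{eq.:right-resolutions-of-Fi}, and the second off~\eqref{eq.:left-resolutions-of-Fi}; both make essential use of the identifications~\eqref{eq:ce1-iso} and~\eqref{eq:ce2-iso}.

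\emph{Constructing the triangle.} I would twist~\eqref{eq.:right-resolutions-of-Fi} by~$\cO_X(i)$. Using~$\cE_1^\vee \cong \cE_1(1)$ from~\eqref{eq:ce1-iso}, each intermediate term, that is, each term of the twisted resolution other than~$F_i(i)$ and~$\cE_2^\vee(i)$, takes the form~$\cO_X(p)$ or~$\cE_1^\vee(p)$ for some~$0 \le p \le i$, and hence lies in~$\cB(p) \subset \langle \cB, \ldots, \cB(i)\rangle$. Viewing these middle terms as a complex~$K^\bullet$ supported in degrees~$[0, i+1]$, exactness of~\eqref{eq.:right-resolutions-of-Fi} yields
\begin{equation*}
H^0(K^\bullet) = F_i(i), \qquad H^{i+1}(K^\bullet) = \cE_2^\vee(i), \qquad H^q(K^\bullet) = 0 \quad \text{for } 1 \le q \le i.
\end{equation*}
The canonical truncation triangle for~$K^\bullet$, after rotation, then produces a distinguished triangle
\begin{equation*}
K^\bullet[i+1] \to \cE_2^\vee(i) \to F_i(i)[i+2]
\end{equation*}
with~$K^\bullet[i+1] \in \langle \cB, \ldots, \cB(i)\rangle$.

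\emph{Orthogonality and conclusion.} To finish, I must show~$F_i(i) \in \langle \cB, \ldots, \cB(i)\rangle^\perp$, i.e., that~$\Ext_X^\bullet(\cO_X(p), F_i(i)) = \Ext_X^\bullet(\cE_1^\vee(p), F_i(i)) = 0$ for all~$0 \le p \le i$. Setting~$j = i - p$ and twisting the left resolution~\eqref{eq.:left-resolutions-of-Fi} by~$\cO_X(j)$, this reduces to the analogous vanishings against each term of the twisted resolution, namely~$\cO_X(j-1)$ (when present), $\cE_1(j-1) \cong \cE_1^\vee(j-2)$ (when present), and~$\cE_2(j-1) \cong \cE_2^\vee(j-3)$, for~$0 \le j \le i$; here I have used~\eqref{eq:ce1-iso} and~\eqref{eq:ce2-iso}. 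Each such pairing is of the form~$\Ext_X^\bullet(\cO_X(k), -)$ or~$\Ext_X^\bullet(\cE_1^\vee(k), -)$ with~$k \ge 1$ and the second argument in~$\cA_0 = \langle \cO_X, \cE_1^\vee, \cE_2^\vee\rangle$, so the vanishing follows from the cross-block semi-orthogonality of~\eqref{eq:cai} together with~$\cO_X(k), \cE_1^\vee(k) \in \cA_k(k)$. With orthogonality established, the triangle above becomes the defining triangle for~$\bL_{\langle \cB, \ldots, \cB(i)\rangle}\cE_2^\vee(i)$, whose third term~$F_i(i)[i+2]$ is the claimed formula. The main technical obstacle is the bookkeeping in the orthogonality step: tracking all twists in the various left resolutions and confirming that each resulting $\Ext$-pair falls inside the semi-orthogonal range of~\eqref{eq:cai}.
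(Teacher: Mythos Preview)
Your proof is correct and follows essentially the same route as the paper's: the right resolutions~\eqref{eq.:right-resolutions-of-Fi} twisted by~$\cO_X(i)$ produce the triangle with cone in~$\langle \cB,\dots,\cB(i)\rangle$, and the left resolutions~\eqref{eq.:left-resolutions-of-Fi} together with~\eqref{eq:ce1-iso},~\eqref{eq:ce2-iso} and the semiorthogonality of~\eqref{eq:cai} give the orthogonality~$F_i(i)\in\langle\cB,\dots,\cB(i)\rangle^\perp$. The only difference is packaging: the paper observes directly that~$F_i(i)\in\langle\cA_0(i-3),\dots,\cA_0(-1)\rangle$ and then invokes the twist of~\eqref{eq:cai} by~$\cO_X(i-3)$ once, whereas you unpack this into individual $\Ext$-vanishings --- the content is the same.
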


\begin{proof}
To prove~\eqref{eq:fi} it is enough to check the following two facts for $0 \le i \le 2$:
\begin{enumerate}
\item
The sheaf $F_i(i)$ lies in $\langle \cB, \dots, \cB(i) \rangle^\perp$.
\item
There is a morphism  $\cE_2^\vee(i) \to F_i(i)[2+i]$, whose cone lies in~\mbox{$\langle \cB, \dots, \cB(i) \rangle$}.
\end{enumerate}

To show the first assertion we note that we have isomorphisms~$\cE_2(-1) \cong \cE_2^\vee(-3)$ by~\eqref{eq:ce2-iso}
and~\mbox{$\cE_1(-1) \cong \cE_1^\vee(-2)$} by~\eqref{eq:ce1-iso}.
Therefore, \eqref{eq.:left-resolutions-of-Fi} gives the inclusions
\begin{align*}
F_0 &\in \langle \cA_0(-3),\cA_0(-2),\cA_0(-1) \rangle,\\
F_1(1) &\in \langle \cA_0(-2),\cA_0(-1) \rangle,\\
F_2(2) &\in \hphantom{\langle}\cA_0(-1).
\end{align*}
Now considering the twist of~\eqref{eq:cai} by $\cO_X(i-3)$ and then using the semiorthogonality
of the resulting decomposition, we obtain~(1).

To show the second fact we just use \eqref{eq.:right-resolutions-of-Fi} twisted by $\cO_X(i)$.
\end{proof}

Now we can state and prove a more precise version of Theorem~\ref{theorem:residual-e6-intro}.

\begin{theorem}
\label{theorem:e6-residual}
The residual category $\cR$ of~$\Db(X)$ with respect to the Lefschetz decomposition~\eqref{eq:cai}
is generated by the completely orthogonal exceptional triple
\begin{equation*}
(F_0,F_1(1),F_2(2))
\end{equation*}
of vector bundles defined by the exact sequences~\eqref{eq.:left-resolutions-of-Fi} or~\eqref{eq.:right-resolutions-of-Fi}.
Moreover, the induced polarization~$\tau$ of~$\cR$ acts by
\begin{equation}
\label{eq:tau-e6}
\tau(F_0) \cong F_1(1)[1],
\quad
\tau(F_1(1)) \cong F_2(2)[1],
\quad
\tau(F_2(2)) \cong F_0[2].
\end{equation}
In particular, $\tau^3 \cong [4]$.
\end{theorem}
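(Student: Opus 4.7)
The plan is to apply Proposition~\ref{proposition:lefschetz-correspondence} together with iterated use of Lemma~\ref{lemma:projection-e6}. Since $\cA_p = \langle \cB, \cE_2^\vee\rangle$ for $p \in \{0,1,2\}$ and $\cA_p = \cB$ for $p \ge 3$, Proposition~\ref{proposition:lefschetz-correspondence} produces a Lefschetz decomposition of $\cR$ whose only non-trivial blocks $\cC_0 = \cC_1 = \cC_2$ are each generated by $\bL_\cB(\cE_2^\vee)$. The $i=0$ case of Lemma~\ref{lemma:projection-e6} identifies this object with $F_0[2]$, so
\[
\cR = \langle F_0,\ \tau(F_0),\ \tau^2(F_0)\rangle,
\]
and it remains to compute the iterates $\tau^p(F_0)$.

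To identify $\tau(F_0)$ I would rewrite the iterated mutation by means of the factorization $\bL_{\langle \cB, \cB(1), \dots, \cB(i)\rangle} = \bL_\cB \circ \bL_{\cB(1)} \circ \cdots \circ \bL_{\cB(i)}$, which is valid by semiorthogonality of the Lefschetz blocks. Combined with the compatibility of mutations with twists, the $i=0$ case of Lemma~\ref{lemma:projection-e6} gives $F_0(1) \cong \bL_{\cB(1)}(\cE_2^\vee(1))[-2]$, and then the $i=1$ case yields
\[
\tau(F_0) \;=\; \bL_\cB(F_0(1)) \;\cong\; \bL_{\langle \cB, \cB(1)\rangle}(\cE_2^\vee(1))[-2] \;\cong\; F_1(1)[1].
\]
An analogous step, using the $i=2$ case of Lemma~\ref{lemma:projection-e6}, gives $\tau(F_1(1)) \cong F_2(2)[1]$. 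In particular, $\cR = \langle F_0, F_1(1), F_2(2)\rangle$ as a triangulated subcategory.

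The cycle closes because the shortest truncation in~\eqref{eq.:left-resolutions-of-Fi} shows that $F_2 \cong \cE_2(-1)$, which by~\eqref{eq:ce2-iso} equals $\cE_2^\vee(-3)$; therefore $F_2(3) \cong \cE_2^\vee$ and Lemma~\ref{lemma:projection-e6} with $i=0$ yields
\[
\tau(F_2(2)) \;=\; \bL_\cB(F_2(3)) \;=\; \bL_\cB(\cE_2^\vee) \;\cong\; F_0[2].
\]
This establishes all three formulas in~\eqref{eq:tau-e6} and shows $\tau^3(F_0) \cong F_0[4]$; since $F_0$ generates $\cR$ under $\tau$, we conclude $\tau^3 \cong [4]$ on all of $\cR$.

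Complete orthogonality is then formal. The semiorthogonal decomposition $\cR = \langle F_0, F_1(1), F_2(2)\rangle$ supplies the three $\Ext$-vanishings in the upper-triangular direction. Applying the autoequivalence $\tau$, which acts cyclically on the triple by~\eqref{eq:tau-e6} and whose cube is~$[4]$, produces the three reverse $\Ext$-vanishings; altogether $(F_0, F_1(1), F_2(2))$ is completely orthogonal. The main technical obstacle is managing the iterated mutations in the second paragraph; once the simplification $F_2 \cong \cE_2(-1)$ is observed, both the cyclic closure and the orthogonality check follow by formal manipulation.
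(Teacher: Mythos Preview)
Your proof is correct and follows essentially the same approach as the paper. The only organizational differences are that you invoke Proposition~\ref{proposition:lefschetz-correspondence} to obtain $\cR = \langle F_0, \tau(F_0), \tau^2(F_0)\rangle$ and then compute the $\tau$-iterates by explicitly factorizing the iterated mutation, whereas the paper directly identifies the three generators as the projections $\bL_{\langle\cB,\dots,\cB(i)\rangle}(\cE_2^\vee(i))$ from Lemma~\ref{lemma:projection-e6} and then reads off the $\tau$-action by citing \cite[(2.4)]{KS2020}; the content is the same, and the closure of the cycle via $F_2(2)\cong\cE_2^\vee(-1)$ and the complete-orthogonality argument via cyclic permutation under~$\tau$ are identical in both.
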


\begin{proof}
Recall that the residual category is defined as the orthogonal of the rectangular part of the Lefschetz collection.
More precisely, if we write $\cB = \langle \cO_X, \cE_1^\vee \rangle$, then
\begin{equation}
\label{def:residual-e6}
\cR = \langle \cB, \cB(1) \dots, \cB(11) \rangle^\perp.
\end{equation}
Therefore, $\cR$ is generated by the projections to~$\cR$ of the objects $\cE_2^\vee$, $\cE_2^\vee(1)$, $\cE_2^\vee(2)$
from~\eqref{def:cai} that do not belong to the rectangular part, i.e.,
\begin{equation*}
\cR = \langle \bL_\cB(\cE_2^\vee), \bL_{\langle \cB, \cB(1) \rangle}(\cE_2^\vee(1)),
\bL_{\langle \cB, \cB(1), \cB(2) \rangle}(\cE_2^\vee(2)) \rangle.
\end{equation*}
Applying Lemma~\ref{lemma:projection-e6} we deduce the equality
\begin{equation*}
\cR = \langle F_0, F_1(1), F_2(2) \rangle.
\end{equation*}
The first two isomorphisms of~\eqref{eq:tau-e6} follow from~\eqref{eq:fi} and~\cite[(2.4)]{KS2020}
and the third isomorphism is evident, because~$F_2(2) \cong \cE_2(1) \cong \cE_2^\vee(-1)$
by~\eqref{eq.:left-resolutions-of-Fi} and~\eqref{eq:ce2-iso}. The identification~$\tau^3\cong[4]$ is then immediate.

Let us prove that the collection $(F_0, F_1(1), F_2(2))$ is completely orthogonal.
Indeed, it is an exceptional collection by construction, and applying the autoequivalence~$\tau$ and using~\eqref{eq:tau-e6},
we conclude that the collections $(F_1(1)[1], F_2(2)[1], F_0[2])$ and $(F_2(2)[2], F_0[3], F_1(1)[3])$ are exceptional as well,
hence the claim.
\end{proof}

\section{The coadjoint Grassmannian of type \texorpdfstring{$\mathrm{F}_4$}{F4}}
\label{section:f4}

Now let $\brG$ be the simple simply connected algebraic group of Dynkin type~$\rF_4$.
Let $\brP_4 \subset \brG$ be the maximal parabolic subgroup associated with the fourth vertex of its Dynkin diagram,
where we use the following numbering
\begin{equation}
\label{diagram:f4}
\dynkin[edge length = 2em,labels*={1,...,4}]F4
\end{equation}
Let
\begin{equation*}
Y = \brG/\brP_4
\end{equation*}
be the corresponding homogeneous variety;
it is the \emph{coadjoint} Grassmannian of type~$\rF_4$.
We have
\begin{equation}
\label{eq:omega-y}
\omega_Y \cong \cO_Y(-11),
\qquad
\dim Y = 15.
\end{equation}

As we mentioned in the Introduction,
% Recall from~\cite[\S6.3]{LM} and~\cite[\S III.2.5.F]{Zak} that
$Y$ can be identified with a hyperplane section of~\mbox{$X \subset \bP(V^\vee)$} corresponding to a general point
\begin{equation*}
v_0 \in \bP(V).
\end{equation*}
In particular, the group~$\brG$ can be identified with (the connected component of) the stabilizer of~$v_0$ in~$\rG$.
We have natural inclusions
\begin{equation}
\label{eq:brg}
\brG \hookrightarrow \rG
\end{equation}
and
\begin{equation}
i \colon Y \hookrightarrow X;
\end{equation}
the inclusion~$i$ is $\brG$-equivariant.

\subsection{Vector bundles on \texorpdfstring{$Y$}{Y}}
\label{subsection:f4-vb}

We consider the triple of $\brG$-equivariant vector bundles
\begin{equation}
\label{def:bcei}
\bcE_0 = \cE_0\vert_Y = \cO_Y,
\qquad
\bcE_1 = \cE_1\vert_Y,
\qquad
\bcE_2 = \cE_2\vert_Y,
\end{equation}
where the bundles~$\cE_i$ were defined in~\eqref{def:cei}.

In what follows we denote by $V^{\lambda}_{\brG}$ the irreducible representation of the group~$\brG$ with the highest weight~$\lambda$.
Similarly, we denote by $\bcU^{\lambda}$ the $\brG$-equivariant vector bundle associated with the irreducible representation
with the highest weight~$\lambda$ of the Levi group~$\brL$ of the parabolic~$\brP_4$.
The fundamental weights of~$\brG$ are denoted~$\bom_1,\ldots,\bom_4$ as in Appendix~\ref{appendix:weights}.
Note that
\begin{equation*}
\bcU^{t\bom_4} \cong \cO_Y(t)
\end{equation*}
for all $t \in \bZ$.

For any $\rG$-dominant weight $\lambda$ the restriction $V_\rG^\lambda\vert_{\brG}$ is a representation of~$\brG$.
It is not irreducible in general; for instance, $V\vert_{\brG} \cong V_{\brG}^{\bom_4} \oplus \Bbbk$ by Lemma~\ref{lemma:restrictions}.
Similarly, for any $\rL$-dominant
weight $\lambda$ the restriction $\cU^\lambda\vert_Y$ is a~$\brG$-equivariant bundle, not irreducible in general.

Below we describe the irreducible factors of the bundles~$\bcE_1$ and~$\bcE_2$. Recall that a 3-term complex sitting in degrees $-1,0,1$ and exact in the first and last terms is called a \emph{monad}.

\begin{lemma}
\label{lemma:monads-bce1}
$(1)$
There is a monad
\begin{equation}
\label{eq:monad-ce1}
\cO_Y \to \bcE_1^\vee \to \cO_Y(1)
\end{equation}
whose middle cohomology is isomorphic to $\bcU^{\bom_3}(-1)$.

$(2)$
There is a monad
\begin{equation}
\label{eq:monad-s2-ce1}
\bcE_1^\vee \to \bcE_2^\vee \oplus \cO_Y(1) \oplus \cO_Y(1) \to \bcE_1^\vee(1)
\end{equation}
whose middle cohomology is isomorphic to $\bcU^{2\bom_3}(-2) \oplus \cO_Y(1)$.

$(3)$
There is a complex
\begin{equation}
\label{eq:monad-l2-ce1}
\cO_Y \to \bcE_1^\vee \to \wedge^2\bcE_1^\vee \oplus \cO_Y(1) \to \bcE_1^\vee(1) \to \cO_Y(2)
\end{equation}
whose only cohomology is isomorphic to $\bcU^{\bom_2}(-1) \oplus \bcU^{\bom_1}$.
\end{lemma}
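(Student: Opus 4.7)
My approach is to manufacture each complex as a $\brG$-equivariant structure inherited from the Cartan cubic~$\Ca$ on~$X$, specialised at the fixed direction~$v_0 \in V$, and then to identify the middle cohomologies by branching calculations using Appendix~\ref{appendix:weights}. The crucial input is the decomposition $V|_{\brG} \cong V_{\brG}^{\bom_4} \oplus \Bbbk \cdot v_0$ (see Lemma~\ref{lemma:restrictions}), combined with the non-vanishing of $\Ca(v_0,v_0,v_0)$ for general~$v_0$; these single out a canonical $\brG$-invariant $\mu_0 := \Ca(v_0,v_0,-) \in V^\vee$.

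For part~(1), the invariant $\mu_0 \in V^\vee = \HH^0(X,\cE_1^\vee)$ yields, via the surjection $V^\vee \otimes \cO_X \twoheadrightarrow \cE_1^\vee$ dual to Lemma~\ref{lemma:ce1-embedding}, a $\brG$-equivariant morphism $\alpha \colon \cO_Y \to \bcE_1^\vee$. The same invariant, combined with the embedding $\cE_1^\vee \cong \cE_1(1) \hookrightarrow V \otimes \cO_X(1)$ obtained from~\eqref{eq:ce1-iso} and Lemma~\ref{lemma:ce1-embedding}, produces a $\brG$-equivariant morphism $\beta \colon \bcE_1^\vee \to \cO_Y(1)$. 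The composition $\beta \circ \alpha$ is a $\brG$-invariant element of~$\HH^0(Y,\cO_Y(1))$; but this space is isomorphic to $V/\langle v_0 \rangle \cong V_{\brG}^{\bom_4}$ and contains no trivial summand, forcing $\beta \circ \alpha = 0$. Since $\alpha$ and $\beta$ are non-zero $\brG$-equivariant morphisms between $\brG$-equivariant bundles on the homogeneous variety~$Y$, they are of constant rank, so $\alpha$ is a subbundle inclusion and $\beta$ a bundle surjection, yielding the monad structure. The middle cohomology is a rank-$8$ equivariant bundle, and matching with the $\brL$-representation on the fibre of $\cE_1^\vee$ at the basepoint (a branching calculation from the Levi of~$\rP_1$, of type~$\rD_5$, to the Levi of~$\brP_4$, of type~$\rC_3$) identifies it with~$\bcU^{\bom_3}(-1)$.

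For parts~(2) and~(3), dualising and restricting the decomposition~\eqref{eq:ce2-sum} gives $\Sym^2 \bcE_1^\vee \cong \bcE_2^\vee \oplus \cO_Y(1)$, so the middle term of~\eqref{eq:monad-s2-ce1} is precisely $\Sym^2 \bcE_1^\vee \oplus \cO_Y(1)$; analogously $\wedge^2 \bcE_1^\vee \oplus \cO_Y(1)$ is the third term of~\eqref{eq:monad-l2-ce1}. I would obtain~\eqref{eq:monad-s2-ce1} as the symmetric-square complex associated to the monad of part~(1), and~\eqref{eq:monad-l2-ce1} as its exterior-square Koszul analogue. The differentials are assembled from $\alpha$, $\beta$, and $\operatorname{id}_{\bcE_1^\vee}$ after symmetrisation or antisymmetrisation, with the extra $\cO_Y(1)$ summand recording a Koszul correction term. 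A standard homological-algebra computation then identifies the single cohomology of each with $\Sym^2$ respectively $\wedge^2$ of the middle cohomology $\bcU^{\bom_3}(-1)$ of~\eqref{eq:monad-ce1}, and the branching rules from Appendix~\ref{appendix:weights} decompose these into the claimed $\bcU^{2\bom_3}(-2) \oplus \cO_Y(1)$ and $\bcU^{\bom_2}(-1) \oplus \bcU^{\bom_1}$.

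The main obstacle is the representation-theoretic bookkeeping. First, one must check that the $\Sym^2$ and $\wedge^2$ constructions really yield complexes of \emph{vector bundles} with the advertised single cohomology: this amounts to verifying that the relevant kernels and cokernels are locally free, which by $\brG$-equivariance reduces to constant-rank statements that still need case-by-case inspection. Second, and more substantively, matching the cohomology summands with the specific irreducible equivariant bundles $\bcU^{\bom_3}(-1)$, $\bcU^{2\bom_3}(-2)$, $\bcU^{\bom_2}(-1)$ and $\bcU^{\bom_1}$ requires branching the relevant $\rL$-representations to the Levi~$\brL$ of~$\brP_4$, which is where the weight-lattice computations of Appendix~\ref{appendix:weights} become indispensable; a delicate point here is ensuring that the copy of $\cO_Y(1)$ appearing in the middle cohomology in~(2) is identified with the correct summand of $\Sym^2 \bcU^{\bom_3}$ and is not cancelled against the Koszul contribution.
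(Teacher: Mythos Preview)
Your strategy for parts~(2) and~(3) coincides with the paper's: both take the symmetric and exterior square of the monad in~(1), rewrite the middle term via~\eqref{eq:ce2-sum}, and identify the cohomology through the decomposition of $\Sym^2\bcU^{\bom_3}$ and $\wedge^2\bcU^{\bom_3}$ carried out in Lemma~\ref{lemma:squares}. Your ``Koszul correction'' and ``delicate point'' about the extra $\cO_Y(1)$ are not actually delicate once one knows Lemma~\ref{lemma:squares}: the summand $\cO_Y(1)$ in the cohomology of~\eqref{eq:monad-s2-ce1} is exactly the $\cO_Y(3)$-factor of $\Sym^2\bcU^{\bom_3}$ twisted down.

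For part~(1) you take a genuinely different, more explicit route. The paper simply quotes Lemma~\ref{lemma:restrictions}: the bundle $\bcE_1^\vee = \cU^{\omega_6}\vert_Y$ carries a $\brG$-equivariant filtration with successive quotients $\cO_Y$, $\bcU^{\bom_3}(-1)$, $\cO_Y(1)$, and such a three-step filtration \emph{is} a monad. Your construction via the Cartan cubic builds the maps $\alpha,\beta$ by hand and then argues they form a monad. This works, but it is heavier: you still need the branching computation of Lemma~\ref{lemma:restrictions} to identify the cohomology, and in addition you must check that $\alpha$ and~$\beta$ are non-zero (this follows from $\HH^0(X,\cE_1^\vee(-1))=0$, which in turn comes from the semiorthogonality of the Lefschetz collection~\eqref{eq:cai}, but you do not say so). The paper's route extracts the monad directly from the same branching data, with no Cartan cubic in sight.

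One concrete slip: the semisimple part of the Levi~$\brL$ of~$\brP_4$ has Dynkin type~$\rB_3$, not~$\rC_3$ (see the diagram at the start of Appendix~\ref{appendix:restrictions}). This does not affect the logic, only the labelling.
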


\begin{proof}
By Lemma~\ref{lemma:restrictions} the bundle $\bcE_1^\vee$ has a $\brG$-equivariant filtration
with factors~$\cO_Y$, $\bcU^{\bom_3}(-1)$, and $\cO_Y(1)$ (precisely in this order).
This is equivalent to the first assertion of the lemma.

The complexes~\eqref{eq:monad-s2-ce1} and~\eqref{eq:monad-l2-ce1}
are obtained as the symmetric and exterior square of~\eqref{eq:monad-ce1};
moreover, \eqref{eq:ce2-sum} is used to rewrite the middle term of~\eqref{eq:monad-s2-ce1}
and Lemma~\ref{lemma:squares} to identify the cohomology.
\end{proof}

\begin{remark}
In~\cite{KuPo} a method to construct exceptional objects in derived categories of homogeneous varieties
of simple algebraic groups as iterated extensions of irreducible vector bundles was given.
This construction can be interpreted in terms of right mutations in the \emph{equivariant} derived category.
As a starting point it takes what is called ``an exceptional block~$B$'' of weights;
the resulting bundles are then extensions of irreducible bundles with weights in the block~$B$.
Here we want to point out that the sets of weights
\begin{equation*}
\{0,\ \bom_3 - \bom_4,\ \bom_4\}
\quad\text{and}\quad
\{0,\ \bom_3 - \bom_4,\ \bom_4,\ 2\bom_3 - 2\bom_4,\ \bom_3,\ 2\bom_4\}
\end{equation*}
that by Lemma~\ref{lemma:monads-bce1}(1) and~(2)
correspond to irreducible factors of the bundles~$\bcE_1^\vee$ and~$\bcE_2^\vee$ \emph{do not}
form exceptional blocks.
Therefore, it is unclear whether one could obtain these bundles by the construction of~\cite{KuPo}.
\end{remark}

Note that every representation of~$\brG$ is self-dual
(because the longest element of the Weyl group of type~$\rF_4$ acts on the weight lattice as~$-1$).
In the case of the representation $V\vert_{\brG}$ the self-duality isomorphism can be made explicit
by means of the trilinear form~$\Ca$ defined in~\S\ref{subsection:e6-vb}.
Recall that $v_0 \in V$ is the~$\brG$-invariant vector that defines the hyperplane cutting out~$Y$ in~$X$.

\begin{lemma}
\label{lemma:ca-0}
The quadratic form $\Ca_0 \coloneqq \Ca(v_0,-,-) \in \Sym^2V^\vee$ is $\brG$-invariant and non-degenerate.
\end{lemma}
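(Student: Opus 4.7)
The proof has two parts. Invariance is immediate: $\Ca$ is $\rG$-invariant by construction (see~\S\ref{subsection:e6-vb}), hence $\brG$-invariant via the inclusion~\eqref{eq:brg}; and $v_0$ is fixed by~$\brG$ by the very definition of~$\brG$ as the stabilizer of~$v_0$ in~$\rG$. Therefore the polarization $\Ca_0 = \Ca(v_0,-,-) \in \Sym^2 V^\vee$ is a $\brG$-invariant quadratic form.

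For non-degeneracy, I would identify the bilinear form $\Ca_0$, viewed as a map $V \to V^\vee$, with (up to a choice of lift of~$v_0$ and a one-dimensional twist) the fiber at the point~$v_0 \in \bP(V)$ of the $\rG$-equivariant morphism of vector bundles~\eqref{def:cartan}. Under this identification, non-degeneracy of~$\Ca_0$ is equivalent to invertibility of~\eqref{def:cartan} at~$v_0$, and so reduces to showing that~$v_0$ lies outside the degeneracy locus~$D \subset \bP(V)$ of~\eqref{def:cartan}. By $\rG$-equivariance, $D$ is a closed $\rG$-invariant subset of~$\bP(V)$, and by the generic invertibility of~\eqref{def:cartan} recalled after its definition, $D$ is not all of~$\bP(V)$.

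To conclude, I would invoke the classification of $\rG$-orbits on~$\bP(V)$ coming from the Severi variety picture of~$X$ (see~\cite{Zak, LM}): the three orbits are the Cayley plane~$X$, the smooth locus of the Cartan cubic hypersurface minus~$X$, and the open orbit given by the complement of the Cartan cubic. Any proper closed $\rG$-invariant subset of~$\bP(V)$ must therefore be contained in the Cartan cubic, so in particular $D$ is. Since $Y = X \cap H_{v_0}$ is smooth by assumption and the projective dual of~$X$ is precisely the Cartan cubic (see~\cite{IM}), the point~$v_0$ lies in the open orbit and hence outside~$D$. This establishes the non-degeneracy. The only non-trivial input is the $\rG$-orbit classification on~$\bP(V)$; everything else is a direct consequence of the setup already in place.
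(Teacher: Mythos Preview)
Your proof is correct, and the core idea---identifying non-degeneracy of~$\Ca_0$ with invertibility of the fibre of~\eqref{def:cartan} at~$v_0$---is exactly what the paper does. The difference is only in how you justify that~$v_0$ avoids the degeneracy locus~$D$: the paper simply observes that~$v_0$ is a \emph{general} point of~$\bP(V)$ by definition (see the beginning of~\S\ref{section:f4}), so it lies outside the proper closed set~$D$ immediately from generic invertibility of~\eqref{def:cartan}. Your route via the $\rG$-orbit classification on~$\bP(V)$ and the identification of the projective dual of~$X$ with the Cartan cubic is valid and has the virtue of making explicit \emph{which} hyperplanes~$v_0$ work (precisely those giving a smooth section), but it is strictly more than what is needed here given the standing assumption on~$v_0$.
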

\begin{proof}
The form~$\Ca_0$ is $\brG$-invariant because $\Ca$ is~$\rG$-invariant and $v_0$ is $\brG$-invariant by definition.
Non-degeneracy of $\Ca_0$ follows from generic non-degeneracy of~\eqref{def:cartan} since the point $v_0$ is general.
\end{proof}

Restricting the embedding $\cE_1 \hookrightarrow V \otimes \cO_X$ of Lemma~\ref{lemma:ce1-embedding} to~$Y$
we obtain a~$\brG$-equivariant embedding $\bcE_1 \hookrightarrow V \otimes \cO_Y$.

\begin{lemma}
\label{lemma:bce1-isotropic}
The subbundle $\bcE_1 \hookrightarrow V \otimes \cO_Y$ is $\Ca_0$-isotropic.
\end{lemma}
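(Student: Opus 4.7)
The plan is to deduce the isotropy directly from the commutative diagram~\eqref{diagram:cartan-bq} by restricting it to~$Y \times \{v_0\} \subset X \times \bP(V)$.

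First, I would observe that by construction $Y$ is the zero locus of the section $\bh|_{X \times \{v_0\}} \in \HH^0(X,\cO_X(1))$ (under the canonical trivialization of $\cO_{\bP(V)}(1)|_{\{v_0\}}$); consequently, the further restriction $\bh|_{Y \times \{v_0\}}$ vanishes identically. Similarly, under the identifications $\cO_{\bP(V)}|_{\{v_0\}} \cong \Bbbk$ and $\cO_{\bP(V)}(1)|_{\{v_0\}} \cong \Bbbk$, the evaluation of the map $\operatorname{id}\boxtimes \Ca$ at $y=v_0$ becomes the morphism $V\otimes \cO_X\to V^\vee\otimes \cO_X$ associated with the quadratic form~$\Ca_0=\Ca(v_0,-,-)$ of Lemma~\ref{lemma:ca-0}.

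Next, I would restrict the whole diagram~\eqref{diagram:cartan-bq} to $Y \times \{v_0\} \cong Y$. The bottom composition becomes
\[
\bcE_1 \xrightarrow{\ \bq|_Y\ } \bcE_1^\vee(-1) \xrightarrow{\ \cdot\, \bh|_{Y\times\{v_0\}}\ } \bcE_1^\vee,
\]
which is the zero map since the second arrow vanishes. By the commutativity up to rescaling of~\eqref{diagram:cartan-bq} (and the fact that the rescaling constant is nonzero, as noted in the remark following that diagram), the top composition
\[
\bcE_1 \hookrightarrow V \otimes \cO_Y \xrightarrow{\ \Ca_0\ } V^\vee \otimes \cO_Y \twoheadrightarrow \bcE_1^\vee
\]
is also zero, where the first arrow is the embedding of Lemma~\ref{lemma:ce1-embedding} restricted to~$Y$ and the last arrow is its dual.

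Finally, I would note that the vanishing of this top composition is equivalent to the desired isotropy: indeed, the composed map $\bcE_1 \to \bcE_1^\vee$ it defines is by construction the adjoint of the pairing $\bcE_1 \otimes \bcE_1 \to \cO_Y$ obtained by restricting $\Ca_0$ to $\bcE_1 \subset V \otimes \cO_Y$, and this pairing vanishes if and only if $\bcE_1$ is $\Ca_0$-isotropic. There is no serious obstacle here, since diagram~\eqref{diagram:cartan-bq} was tailored precisely to transform the vanishing of $\bh$ on $Y \times \{v_0\}$ into an isotropy statement; the only care required is to keep track of the restrictions of the various sheaves and morphisms to $Y \times \{v_0\}$.
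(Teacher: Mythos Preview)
Your proposal is correct and follows essentially the same approach as the paper: restrict diagram~\eqref{diagram:cartan-bq} to $Y \times \{v_0\}$, use that $\bh$ vanishes there to kill the bottom composition, and conclude via the commutativity that the top composition (which encodes the pairing $\Ca_0\vert_{\bcE_1}$) vanishes. The paper's proof is a one-sentence sketch of exactly this argument; you have simply unpacked the details.
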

\begin{proof}
The assertion follows easily from commutativity of the diagram~\eqref{diagram:cartan-bq}
since the equation $\bh$ of the universal hyperplane section of~$X$
vanishes on the subvariety~\mbox{$Y \times \{v_0\} \subset X \times \bP(V)$}.
\end{proof}

The following lemma is crucial for what follows.

\begin{lemma}
\label{lemma:monads-v}
$(1)$ There is a monad
\begin{equation}
\label{eq:monad-v}
\bcE_1^\vee(-1) \to V \otimes \cO_Y \to \bcE_1^\vee
\end{equation}
whose middle cohomology is isomorphic to~$\bcU^{\bom_1}(-1)$.

$(2)$ There is a complex
\begin{multline}
\label{eq:monad-l2-v}
\Sym^2\bcE_1^\vee(-2) \to V \otimes \bcE_1^\vee(-1) \to
% \\ \to
\Big(\wedge^2V \otimes \cO_Y\Big) \oplus \Big(\bcE_1^\vee \otimes \bcE_1^\vee(-1) \Big)
\to V \otimes \bcE_1^\vee \to \Sym^2\bcE_1^\vee
\end{multline}
whose only cohomology is isomorphic to~$\bcU^{\bom_2}(-2)$.
\end{lemma}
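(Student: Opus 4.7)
Both complexes will be built from the isotropic embedding $\bcE_1 \hookrightarrow V\otimes\cO_Y$ of Lemma~\ref{lemma:ce1-embedding} (restricted to~$Y$) and the non-degenerate self-duality it acquires through the $\brG$-invariant quadratic form $\Ca_0$ of Lemma~\ref{lemma:ca-0}. For part~(1), the plan is to combine this embedding with the isomorphism $\bcE_1 \cong \bcE_1^\vee(-1)$ of~\eqref{eq:ce1-iso} to obtain $\alpha\colon\bcE_1^\vee(-1) \hookrightarrow V\otimes\cO_Y$, and to let $\beta\colon V\otimes\cO_Y \twoheadrightarrow \bcE_1^\vee$ be the composition of the dual embedding with the $\Ca_0$-induced identification $V\cong V^\vee$. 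By construction the composition $\beta\circ\alpha$ is the restriction of $\Ca_0$ to $\bcE_1$, which vanishes by Lemma~\ref{lemma:bce1-isotropic}; combined with the fiberwise injectivity of $\alpha$ and surjectivity of $\beta$ this gives the monad~\eqref{eq:monad-v}, whose middle cohomology $H = \bcE_1^\perp/\bcE_1$ is an equivariant vector bundle of rank $27 - 2\cdot 10 = 7$. To identify $H$ with $\bcU^{\bom_1}(-1)$ I will inspect the $\brL$-composition factors of the $\brP_4$-module $V$ using Lemma~\ref{lemma:restrictions} and Appendix~\ref{appendix:weights}: the factors of $\bcE_1$ and $\bcE_1^\vee$ account for all but a unique irreducible piece of rank~$7$, whose weight matches $V_{\brL}^{\bom_1}$ twisted by the central character corresponding to~$\cO_Y(-1)$.

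For part~(2) I will apply the second exterior-power Schur functor (with Koszul signs) to the three-term complex $M\colon A\to B\to C$ from part~(1), where $A=\bcE_1^\vee(-1)$, $B=V\otimes\cO_Y$, and $C=\bcE_1^\vee$. Since $A$ and $C$ sit in odd cohomological degrees while $B$ is in degree~$0$, the super-symmetric decomposition of $\wedge^2$ produces exactly
\begin{equation*}
\wedge^2 M = \bigl(\Sym^2 A \to A\otimes B \to \wedge^2 B \oplus A\otimes C \to B\otimes C \to \Sym^2 C\bigr),
\end{equation*}
and after substitution this is literally the complex~\eqref{eq:monad-l2-v} in the statement. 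The monad $M$ is a complex of locally free sheaves quasi-isomorphic to $H$ placed in degree~$0$; hence $\wedge^2 M$ computes the non-additive derived functor $L\wedge^2 H \cong \wedge^2 H$, which is concentrated in degree~$0$. Finally $\wedge^2 V_{\brL}^{\bom_1}\cong V_{\brL}^{\bom_2}$ by standard representation theory of~$\rB_3$, so $\wedge^2 H \cong \bcU^{\bom_2}(-2)$ as required.

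The main obstacle will be justifying rigorously that the naive Schur-functor complex $\wedge^2 M$ represents $L\wedge^2 H$ in the derived category. I would handle this by splitting the monad into the two short exact sequences $0\to A \to \ker\beta \to H \to 0$ and $0\to \ker\beta\to B\to C\to 0$, using them to induce compatible filtrations on $A\otimes B$, $\wedge^2 B$, and $B\otimes C$, and running the resulting spectral sequence to confirm that all cohomology cancels except $\wedge^2 H$ in the middle. The remaining point --- pinning down the central character of $H$ in part~(1) --- is a direct weight-lattice computation against the tables in Appendix~\ref{appendix:weights}.
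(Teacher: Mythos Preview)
Your proposal is correct and follows essentially the same approach as the paper: the monad~\eqref{eq:monad-v} is built exactly as you describe (isotropic embedding via Lemma~\ref{lemma:bce1-isotropic}, dual map twisted by~$\Ca_0$, identification~\eqref{eq:ce1-iso}), its cohomology is identified by the weight computation in Lemma~\ref{lemma:restrictions-v}, and the complex~\eqref{eq:monad-l2-v} is its exterior square with cohomology~$\wedge^2\bcU^{\bom_1}(-1)\cong\bcU^{\bom_2}(-2)$ by Lemma~\ref{lemma:squares-2}. Your filtration/spectral-sequence justification that~$\wedge^2 M$ computes~$\wedge^2 H$ is more explicit than the paper's one-line appeal, but the argument is the same.
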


\begin{proof}
The first morphism in~\eqref{eq:monad-v} is defined in Lemma~\ref{lemma:bce1-isotropic}
(using the identification of bundles~$\bcE_1^\vee(-1) \cong \bcE_1$ of~\eqref{eq:ce1-iso}); in particular it is injective.
The second morphism in~\eqref{eq:monad-v} is the composition of the dual of the first
with~\mbox{$\Ca_0 \colon V \otimes \cO_Y \xrightarrow{\ \sim\ } V^\vee \otimes \cO_Y$}, in particular it is surjective.
The composition of these morphisms is zero because the subbundle $\bcE_1 \subset V \otimes \cO_Y$ is~$\Ca_0$-isotropic.
The description of the cohomology bundle of~\eqref{eq:monad-v}
follows from Lemma~\ref{lemma:restrictions-v} combined with Lemma~\ref{lemma:restrictions}.

The complex~\eqref{eq:monad-l2-v} is the exterior square of~\eqref{eq:monad-v}
and the description of its cohomology sheaf follows from Lemma~\ref{lemma:squares-2}.
\end{proof}

\subsection{Proof of Theorems \ref{theorem:f4-intro} and \ref{theorem:residual-f4-intro}}
\label{subsection:f4-proofs}

We define the subcategories $\bcA_{p} \subset \Db(Y)$ by
\begin{equation}
\label{def:bcai}
\bcA_p = i^*(\cA_{p}) =
\begin{cases}
\langle \cO, \bcE_1^\vee, \bcE_2^\vee \rangle, 	& 1 \le p \le 2,\\
\langle \cO, \bcE_1^\vee \rangle, 		& 3 \le p \le 11.\\
\end{cases}
\end{equation}

Applying Lemma~\ref{lemma:lefschetz-hyperplane} to~\eqref{eq:cai} we obtain the following

\begin{lemma}
\label{lemma:bcai-semiorthogonal}
The pullback functor $i^* \colon \Db(X) \to \Db(Y)$ is fully faithful on the categories~$\cA_p$ for $1 \le p \le 11$,
and the collection of subcategories
\begin{equation*}
\bcA_{1}, \bcA_{2}(1), \dots, \bcA_{11}(10) \subset \Db(Y)
\end{equation*}
is semiorthogonal.
In particular, the bundles $(\cO_Y,\bcE_1,\bcE_2)$ on~$Y$ form an exceptional triple.
\end{lemma}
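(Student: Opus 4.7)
The entire statement is a direct application of Lemma~\ref{lemma:lefschetz-hyperplane} to the Faenzi--Manivel Lefschetz decomposition~\eqref{eq:cai} from Theorem~\ref{theorem:fm}. The plan is therefore to verify that the hypotheses match and read off the conclusions.

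First I would check the setup. The decomposition~\eqref{eq:cai} is a Lefschetz decomposition of~$\Db(X)$ of length~$m = 12$, with the descending chain of admissible subcategories $\cA_{11} \subseteq \dots \subseteq \cA_0$ given by~\eqref{def:cai}, and the polarization is~$\cO_X(1)$ coming from the embedding $X \subset \bP(V^\vee)$. Moreover, as established at the opening of Section~\ref{section:f4}, the embedding $i \colon Y \hookrightarrow X$ is a smooth hyperplane section corresponding to the general point $v_0 \in \bP(V)$. Thus Lemma~\ref{lemma:lefschetz-hyperplane} applies verbatim and delivers at once the two main claims: (i)~that $i^*$ is fully faithful on each block $\cA_p$ for $1 \le p \le 11$, and (ii)~that the subcategories $\bcA_p(p-1) = i^*(\cA_p)(p-1) \subset \Db(Y)$ are semiorthogonal, where the $\bcA_p$ match the definition~\eqref{def:bcai}.

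For the final assertion, I would use the full faithfulness just obtained on~$\cA_1$. Since~$\cA_1 = \langle \cO_X, \cE_1^\vee, \cE_2^\vee \rangle$ is generated by an exceptional triple in $\Db(X)$ (a sub-collection of the Faenzi--Manivel collection of Theorem~\ref{theorem:fm}), the full faithfulness transfers the exceptionality to the restricted triple $(\cO_Y, \bcE_1^\vee, \bcE_2^\vee)$ in~$\Db(Y)$. The three bundles $\cO_Y, \bcE_1, \bcE_2$ appearing in the lemma statement then follow using the isomorphisms $\bcE_i^\vee \cong \bcE_i(i)$ obtained by restricting~\eqref{eq:ce1-iso} and~\eqref{eq:ce2-iso} to~$Y$, which shows each of~$\cO_Y, \bcE_1, \bcE_2$ is an exceptional object.

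There is no substantial obstacle: the content is entirely a matter of unpacking the general Lemma~\ref{lemma:lefschetz-hyperplane} in this concrete setting. The only thing worth double-checking is the bookkeeping of the blocks and twists, namely that the definition~\eqref{def:bcai} of~$\bcA_p$ on~$Y$ is literally the image~$i^*(\cA_p)$ of the corresponding block from~\eqref{def:cai} on~$X$, which is immediate from the construction.
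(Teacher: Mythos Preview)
Your proposal is correct and matches the paper's approach exactly: the paper's entire proof is the single sentence ``Applying Lemma~\ref{lemma:lefschetz-hyperplane} to~\eqref{eq:cai} we obtain the following,'' and you have correctly identified and unpacked this. One small remark: your final step only establishes that each of $\cO_Y$, $\bcE_1$, $\bcE_2$ is individually exceptional, not that the ordered triple $(\cO_Y,\bcE_1,\bcE_2)$ is an exceptional collection; but the paper does not spell this out either, and the intended content is really just that $(\cO_Y,\bcE_1^\vee,\bcE_2^\vee)$ is exceptional, which you have.
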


We denote by
\begin{equation}
\label{def:cd}
\cD \coloneqq \langle \bcA_{1}, \bcA_{2}(1), \dots, \bcA_{11}(10) \rangle \subset \Db(Y)
\end{equation}
the subcategory generated by the $\bcA_p(p-1)$ with $1 \leq p \leq 11$. To prove Theorem~\ref{theorem:f4-intro} we need to show that~$\cD = \Db(Y)$. We start by showing that some particular equivariant bundles belong to~$\cD$.

\begin{lemma}
\label{lemma:w2bce1}
We have $\wedge^2\bcE_1^\vee(1) \in \cD$.
\end{lemma}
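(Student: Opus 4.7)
The plan is to observe that this lemma follows immediately by pulling back Lemma~\ref{lemma:wedge2-ce1} from~$X$ to~$Y$, using that the restriction functor $i^*$ is compatible with the Lefschetz structures on both sides.

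More precisely, first I would invoke Lemma~\ref{lemma:wedge2-ce1}, which gives the inclusion
\begin{equation*}
\wedge^2 \cE_1^\vee(1) \in \langle \cA_1, \cA_2(1), \dots, \cA_{11}(10) \rangle \subset \Db(X).
\end{equation*}
Next I would apply the triangulated functor $i^*\colon\Db(X)\to\Db(Y)$. Since by definition $\bcA_p = i^*(\cA_p)$ and $i^*$ commutes with the twist by~$\cO(1)$, the image of the subcategory~$\langle \cA_1, \cA_2(1), \dots, \cA_{11}(10) \rangle$ is contained in the triangulated subcategory of~$\Db(Y)$ generated by the~$\bcA_p(p-1)$ for $1\le p\le 11$, which is precisely~$\cD$ as defined in~\eqref{def:cd}.

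Finally, I would identify $i^*(\wedge^2 \cE_1^\vee(1))$ with $\wedge^2 \bcE_1^\vee(1)$, which is immediate because $\cE_1$ is locally free and $\bcE_1 = i^*\cE_1$ by definition~\eqref{def:bcei}, so wedge powers and the twist by~$\cO(1)$ commute with the pullback. There is essentially no obstacle here: the only thing to verify is that the chain of reasoning is formally valid, namely that $i^*$ sends the relevant right orthogonal on~$X$ into~$\cD$, and this is built into the very definition of~$\bcA_p$.
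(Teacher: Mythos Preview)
Your proposal is correct and takes essentially the same approach as the paper: both invoke Lemma~\ref{lemma:wedge2-ce1} and then pull back along~$i^*$, using that~$\bcA_p = i^*(\cA_p)$ by definition. The paper makes the argument slightly more explicit by writing out the filtration of~$\wedge^2\cE_1^\vee(1)$ coming from the semiorthogonal decomposition and applying~$i^*$ to each step, whereas you phrase it as the general fact that a triangulated functor sends a generated subcategory into the subcategory generated by the images; these are the same argument. (One tiny terminological slip: in your last sentence you call the subcategory a ``right orthogonal'', but ${}^\perp(\cA_0(-1))$ is a \emph{left} orthogonal; this does not affect the argument.)
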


\begin{proof}
We will use Lemma~\ref{lemma:wedge2-ce1} for this.
Indeed, by definition of a semior\-thogonal decomposition we have a sequence of morphisms in~$\Db(X)$
\begin{equation*}
0 = \cF_{11} \to \cF_{10} \to \dots \to \cF_0 = \wedge^2\cE_1^\vee(1),
\end{equation*}
such that $\Cone(\cF_p \to \cF_{p-1}) \in \cA_p(p-1)$, where $1 \le p \le 11$.
Applying the functor~$i^*$ we get a sequence of morphisms in $\Db(Y)$
\begin{equation*}
0 = i^*\cF_{11} \to i^*\cF_{10} \to \dots \to i^*\cF_0 = i^*(\wedge^2\cE_1^\vee(1)) = \wedge^2\bcE_1^\vee(1),
\end{equation*}
such that $\Cone(i^*\cF_p \to i^*\cF_{p-1}) \in i^*\cA_p(p-1) = \bcA_{p}(p-1)$.
This means that we have the required containment~$\wedge^2\bcE_1^\vee(1) \in \cD$, which finishes the proof.
\end{proof}

\begin{lemma}
\label{lemma:omega1-omega2}
We have
\begin{equation*}
\bcU^{\bom_1}(1), \bcU^{\bom_2} \in \cD.
\end{equation*}
\end{lemma}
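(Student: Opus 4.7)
The plan is to exhibit each of the bundles $\bcU^{\bom_1}(1)$ and $\bcU^{\bom_2}$ as the unique cohomology of a bounded complex all of whose terms already lie in~$\cD$; since $\cD$ is a triangulated subcategory of~$\Db(Y)$, any such presentation forces the cohomology into $\cD$ by splitting the complex into its kernel-and-image short exact sequences and iterating the corresponding distinguished triangles. The required complexes are provided, after a suitable twist, by the monads of Lemma~\ref{lemma:monads-v}(1) and Lemma~\ref{lemma:monads-bce1}(3), combined with the containment $\wedge^2\bcE_1^\vee(1) \in \cD$ established in Lemma~\ref{lemma:w2bce1}.

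For $\bcU^{\bom_1}(1)$ I would twist the monad~\eqref{eq:monad-v} by~$\cO_Y(2)$, obtaining
\begin{equation*}
\bcE_1^\vee(1) \to V \otimes \cO_Y(2) \to \bcE_1^\vee(2)
\end{equation*}
with middle cohomology~$\bcU^{\bom_1}(1)$. All three terms manifestly belong to~$\cD$: the outer terms lie in $\bcA_2(1)$ and $\bcA_3(2)$ respectively, and $\cO_Y(2) \in \bcA_3(2)$. This settles the first containment.

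For $\bcU^{\bom_2}$ I would twist the complex~\eqref{eq:monad-l2-ce1} by~$\cO_Y(1)$ to obtain a five-term complex with outer terms $\cO_Y(1)$, $\bcE_1^\vee(1)$, $\bcE_1^\vee(2)$, $\cO_Y(3)$ and middle term $\wedge^2\bcE_1^\vee(1) \oplus \cO_Y(2)$, whose only cohomology is $\bcU^{\bom_2} \oplus \bcU^{\bom_1}(1)$. The outer terms are generators of~$\cD$ by definition, $\cO_Y(2) \in \cD$ trivially, and the nontrivial summand $\wedge^2\bcE_1^\vee(1)$ of the middle term lies in~$\cD$ by Lemma~\ref{lemma:w2bce1}; hence $\bcU^{\bom_2} \oplus \bcU^{\bom_1}(1) \in \cD$. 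Combining this with the first step and using that the cone of the canonical split inclusion $\bcU^{\bom_1}(1) \hookrightarrow \bcU^{\bom_2} \oplus \bcU^{\bom_1}(1)$ is~$\bcU^{\bom_2}$ (so that $\bcU^{\bom_2}$ lies in any triangulated subcategory containing both objects), we conclude $\bcU^{\bom_2} \in \cD$.

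The only genuinely thoughtful point --- and what I would call the main obstacle --- is the choice of twist in the second step: the middle term of~\eqref{eq:monad-l2-ce1} must be twisted into $\wedge^2\bcE_1^\vee(1) \oplus \cO_Y(2)$, which is precisely the unique twist of the exterior square for which Lemma~\ref{lemma:w2bce1} supplies membership in~$\cD$. Any other twist of~\eqref{eq:monad-l2-ce1} would leave the middle term of the complex out of reach, so the argument is quite rigid.
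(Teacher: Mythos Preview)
Your proof is correct and follows exactly the same approach as the paper: twist~\eqref{eq:monad-v} by~$\cO_Y(2)$ for the first containment, and twist~\eqref{eq:monad-l2-ce1} by~$\cO_Y(1)$, invoking Lemma~\ref{lemma:w2bce1} and the first containment, for the second. Your exposition merely spells out in more detail what the paper compresses into two sentences.
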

\begin{proof}
The first containment
follows from the monad~\eqref{eq:monad-v} twisted by~$\cO_Y(2)$.
The second containment follows from~\eqref{eq:monad-l2-ce1} twisted by~$\cO_Y(1)$
combined with Lemma~\ref{lemma:w2bce1} and the first containment.
\end{proof}

\begin{proposition}
\label{proposition:bce2}
We have
\begin{equation*}
\bcE_2^\vee(2) \in \cD.
\end{equation*}
\end{proposition}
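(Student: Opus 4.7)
The plan is to apply Lemma~\ref{lemma:monads-v}(2) --- the 5-term complex~\eqref{eq:monad-l2-v} --- after twisting by $\cO_Y(2)$. Every term of the twisted complex except the last will already lie in $\cD$, its unique cohomology sheaf will also lie in $\cD$, and hence the last term, which contains $\bcE_2^\vee(2)$ as a direct summand, will automatically belong to $\cD$.

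Concretely, the twisted complex takes the form
\begin{multline*}
\Sym^2\bcE_1^\vee \to V \otimes \bcE_1^\vee(1) \to (\wedge^2V \otimes \cO_Y(2)) \oplus (\bcE_1^\vee \otimes \bcE_1^\vee(1)) \\ \to V \otimes \bcE_1^\vee(2) \to \Sym^2\bcE_1^\vee(2),
\end{multline*}
with unique cohomology $\bcU^{\bom_2}$. Using~\eqref{eq:ce2-sum} I would rewrite the first term as $\bcE_2^\vee \oplus \cO_Y(1)$, the last term as $\bcE_2^\vee(2) \oplus \cO_Y(3)$, and
\begin{equation*}
\bcE_1^\vee \otimes \bcE_1^\vee(1) \cong \bcE_2^\vee(1) \oplus \cO_Y(2) \oplus \wedge^2\bcE_1^\vee(1),
\end{equation*}
where the last summand lies in $\cD$ by Lemma~\ref{lemma:w2bce1}. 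Since $\cO_Y(t), \bcE_1^\vee(t) \in \cD$ for $0 \le t \le 10$ and $\bcE_2^\vee(t) \in \cD$ for $t = 0, 1$, every term of the complex other than $\Sym^2\bcE_1^\vee(2)$ lies in~$\cD$, while Lemma~\ref{lemma:omega1-omega2} ensures that the cohomology $\bcU^{\bom_2}$ does too, so the complex itself, being quasi-isomorphic to a shift of $\bcU^{\bom_2}$, is an object of $\cD$.

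It remains a routine step to deduce $\Sym^2\bcE_1^\vee(2) \in \cD$ by iterating the distinguished triangles $\sigma^{\ge i+1}C^\bullet \to \sigma^{\ge i}C^\bullet \to C^i[-i]$ coming from the stupid truncations of the complex --- peeling off $C^0, C^1, C^2, C^3$ from the left --- and then to extract $\bcE_2^\vee(2) \in \cD$ from the decomposition $\Sym^2\bcE_1^\vee(2) \cong \bcE_2^\vee(2) \oplus \cO_Y(3)$, using that $\cO_Y(3) \in \cD$ and that $\cD$ is admissible and therefore closed under direct summands. The main conceptual obstacle is recognizing that the $\wedge^2$-type monad~\eqref{eq:monad-l2-v} is the right tool rather than the more obvious $\Sym^2$-monad~\eqref{eq:monad-s2-ce1}: the latter would require separately controlling the irreducible bundle $\bcU^{2\bom_3}$, whereas~\eqref{eq:monad-l2-v} only ever produces $\bcU^{\bom_2}$, which Lemma~\ref{lemma:omega1-omega2} has already placed in~$\cD$.
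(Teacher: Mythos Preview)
Your proposal is correct and follows essentially the same argument as the paper: twist the complex~\eqref{eq:monad-l2-v} by $\cO_Y(2)$, observe that its cohomology $\bcU^{\bom_2}$ lies in~$\cD$ by Lemma~\ref{lemma:omega1-omega2} and that its first four terms lie in~$\cD$ (using Lemma~\ref{lemma:w2bce1} for the $\wedge^2\bcE_1^\vee(1)$ summand), conclude that the last term $\Sym^2\bcE_1^\vee(2)\cong\bcE_2^\vee(2)\oplus\cO_Y(3)$ is in~$\cD$, and extract the summand. Your write-up is slightly more detailed than the paper's in spelling out the truncation argument and the closure under summands, but the route is the same.
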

\begin{proof}
Consider the complex~\eqref{eq:monad-l2-v} twisted by~$\cO_Y(2)$.
By Lemma~\ref{lemma:omega1-omega2} its cohomology~$\bcU^{\bom_2}$ is contained in the category~$\cD$.
Moreover, by Lemma~\ref{lemma:w2bce1} (note~$\bcE_1^\vee \otimes \bcE_1^\vee \cong \wedge^2\bcE_1^\vee \oplus \bcE_2^\vee \oplus \cO_Y(1)$)
its first four terms are in~$\cD$.
Therefore, its last term $\Sym^2\bcE_1(2) \cong \bcE_2^\vee(2) \oplus \cO_Y(3)$ is also in~$\cD$.
Since~\mbox{$\cO_Y(3) \in \cD$}, we conclude that~$\bcE_2^\vee(2)$ is in~$\cD$ as well.
\end{proof}

\begin{remark}
In fact, the decomposition of $\bcE_2^\vee(2)$ with respect to the right-hand side of~\eqref{def:cd} can be made more precise:
one can show that there is an exact sequence
\begin{equation*}
0 \to \bcE_2^\vee \to V \otimes \bcE_1^\vee(1) \to
W \otimes \cO_Y(2) \oplus \bcE_2^\vee(1) \to
V \otimes \bcE_1^\vee(2) \to \bcE_2^\vee(2) \to 0,
\end{equation*}
where $W = \wedge^2V \oplus V$.
It is instructive to compare this with~\eqref{eq.:staircase-complex-e6-p1}.
\end{remark}

Now we are ready to prove the following more precise version of Theorem~\ref{theorem:f4-intro}.
Recall from~\eqref{def:bcai} the definition of the categories~$\bcA_p$.

\begin{theorem}
\label{theorem:f4}
We have a Lefschetz decomposition
\begin{equation}
\label{eq:dby-lefschetz}
\Db(Y) = \langle \bcA_1, \bcA_2(1), \dots, \bcA_{11}(10) \rangle.
\end{equation}
In particular, $\Db(Y)$ is generated by the exceptional collection~\eqref{eq:dby-intro} of length~$24$.
\end{theorem}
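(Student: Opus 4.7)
The plan is to show $\cD = \Db(Y)$ for the subcategory defined in~\eqref{def:cd}; combined with the semiorthogonality of Lemma~\ref{lemma:bcai-semiorthogonal}, this yields both the Lefschetz decomposition~\eqref{eq:dby-lefschetz} and the exceptional collection~\eqref{eq:dby-intro}. Equivalently, I aim to show the right orthogonal $\cD^\perp \subset \Db(Y)$ vanishes.

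For any $C \in \cD^\perp$, the adjunction $\Hom_Y(i^*A, C) \cong \Hom_X(A, i_*C)$ converts each orthogonality relation defining $\cD^\perp$ into vanishing of $\Hom^\bullet_X(A, i_*C)$ for the $24$ bundles $A$ appearing in the blocks $\cA_1(0), \cA_2(1), \ldots, \cA_{11}(10)$ on $X$. Adding the vanishing $\Hom^\bullet_X(\cE_2^\vee(2), i_*C) = 0$ supplied by Proposition~\ref{proposition:bce2} (since $\bcE_2^\vee(2) \in \cD$), one covers $25$ of the $27$ generators of the Faenzi--Manivel collection~\eqref{eq:dbx-intro}. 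Semiorthogonality of that collection then forces
\[
i_*C \in \langle \cO_X(11), \cE_1^\vee(11) \rangle.
\]

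I would then apply Theorem~\ref{theorem:samokhin} to the exceptional pair $(\cF, \cG) = (\cO_X(11), \cE_1^\vee(11))$; it is exceptional by~\eqref{eq:dbx-intro}, and $\cF$ is torsion-free as a line bundle. The resulting subcategory $\cC \subset \Db(Y)$ contains $\cD^\perp$, so it suffices to prove $\cC = 0$; by Samokhin this reduces to showing the Grothendieck-group map $\rK_0(\cC) \to \rK_0(Y)$ is zero.

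The principal technical hurdle will be this last Grothendieck-group check. Samokhin's proof first reduces the problem to showing $[F] = 0$ in $\rK_0(Y)$ for every coherent sheaf $F \in \cC$. For such $F$, since $i_*F$ is torsion and $\cF$ is torsion-free, the decomposition triangle for $i_*F$ in $\langle \cF,\cG \rangle$ collapses to a two-term locally free resolution
\[
0 \to A^{-1} \otimes \cO_X(11) \to B^0 \otimes \cE_1^\vee(11) \to i_*F \to 0
\]
on $X$, with $A^{-1}$, $B^0$ finite-dimensional vector spaces. Pulling back to $Y$ and using $i^*i_*F \cong F \oplus F(-1)[1]$ yields the identity $[F] - [F(-1)] = [B^0 \otimes \bcE_1^\vee(11)] - [A^{-1} \otimes \cO_Y(11)]$ in $\rK_0(Y)$. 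Combining this with the fact that $Y$ is a rational homogeneous variety whose $\rK_0(Y)$ is free of rank $|W(\rF_4)/W(\rB_3)| = 1152/48 = 24$, matching the length of~\eqref{eq:dby-intro}, and a numerical Gram-matrix check verifying that the $24$ classes in~\eqref{eq:dby-intro} form a $\bZ$-basis of $\rK_0(Y)$, should force $[F] = 0$ as required.
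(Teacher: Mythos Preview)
Your overall strategy matches the paper's, but there is a genuine error in the step where you conclude
\[
i_*C \in \langle \cO_X(11), \cE_1^\vee(11) \rangle.
\]
Vanishing of $\Hom_X(E_i, i_*C)$ for the first~25 objects of~\eqref{eq:dbx-intro} says that $i_*C$ lies in the \emph{right} orthogonal $\langle E_1,\dots,E_{25}\rangle^\perp$, whereas $\langle \cO_X(11), \cE_1^\vee(11)\rangle = {}^\perp\langle E_1,\dots,E_{25}\rangle$ is the \emph{left} orthogonal. These two subcategories are different; in fact, by Serre duality one has
\[
\langle E_1,\dots,E_{25}\rangle^\perp \;=\; \bS_X\bigl({}^\perp\langle E_1,\dots,E_{25}\rangle\bigr)
\;=\; \langle \cO_X(11), \cE_1^\vee(11)\rangle \otimes \omega_X \;=\; \langle \cO_X(-1), \cE_1^\vee(-1)\rangle,
\]
which is precisely the pair the paper uses. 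The paper reaches this directly by twisting~\eqref{eq:cai} by~$\cO_X(-1)$, so that the leftover block $\cA_0(-1)$ sits on the \emph{left} of the decomposition and hence equals the right orthogonal of the remaining blocks; an additional application of the staircase complex~\eqref{eq.:staircase-complex-e6-p1} then translates the vanishing for~$\cE_2^\vee(2)$ into vanishing for~$\cE_2^\vee(-1)$, narrowing $\cA_0(-1)$ down to the pair.

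Your Grothendieck-group argument is also more complicated than needed, and not quite complete: the isomorphism $i^*i_*F \cong F \oplus F(-1)[1]$ is false in general (there is only a triangle), and the final ``should force $[F]=0$'' step is not justified --- the identity for $[F]-[F(-1)]$ does not by itself determine~$[F]$, since the operator $1-[\cO_Y(-1)]$ on $\rK_0(Y)$ is nilpotent. The paper instead observes that $\rK_0(Y)=\rK_0(\cD)\oplus\rK_0(\cD^\perp)$ with both $\rK_0(Y)$ and $\rK_0(\cD)$ free of rank~$24$, so $\rK_0(\cD^\perp)=0$ directly; then Theorem~\ref{theorem:samokhin} is applied to~$\cC=\cD^\perp$ (the paper checks both inclusions in~\eqref{eq:cd-perp}), avoiding the need to control $\rK_0$ of the a~priori larger category~$\cC$.
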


\begin{proof}

By~\eqref{def:cd} we need to show that $\cD = \Db(Y)$.
By Lemma~\ref{lemma:bcai-semiorthogonal} the category~$\cD$ is generated by an exceptional collection,
hence it is admissible, so we obtain a semiorthogonal decomposition
\begin{equation*}
\Db(Y) = \langle \cD^\perp, \cD \rangle,
\end{equation*}
and so we need to show that~\mbox{$\cD^\perp = 0$}.
We will deduce this from Theorem~\ref{theorem:samokhin}.

First, note that
\begin{equation*}
\rK_0(Y) = \rK_0(\cD) \oplus \rK_0(\cD^\perp).
\end{equation*}
Since $Y$ is a homogeneous variety, the left-hand side is a free abelian group
of rank equal to the index of the Weyl group of~$\brL$ in the Weyl group of~$\brG$, which is equal to~24.
On the other hand, the first summand in the right-hand side is also a free abelian group of rank~24,
because~$\cD$ is generated by~24 exceptional objects.
Therefore,
\begin{equation*}
\rK_0(\cD^\perp) = 0.
\end{equation*}

Let us show that
\begin{equation}
\label{eq:cd-perp}
\cD^\perp = \{ C \in \Db(Y) \mid i_*C \in \langle \cO_X(-1), \cE_1^\vee(-1) \rangle \}.
\end{equation}
Indeed, assume $C \in \cD^\perp$.
By definition of~$\cD$ we have
\begin{equation*}
\Hom_Y(i^*\cA_p(p-1), C) = 0
\qquad \text{for $1 \le p \le 11$}
\end{equation*}
and then by adjunction we obtain
\begin{equation*}
\Hom_X(\cA_p(p-1), i_*C) = 0
\qquad \text{for $1 \le p \le 11$.}
\end{equation*}
Then~\eqref{eq:cai} implies that $i_*C \in \cA_0(-1) = \langle \cO_X(-1), \cE_1^\vee(-1), \cE_2^\vee(-1) \rangle$.
Moreover, since we have $i^*\cE_2^\vee(2) = \bcE_2^\vee(2) \in \cD$ by Lemma~\ref{proposition:bce2},
the same argument proves that
\begin{equation*}
\Hom_X(\cE_2^\vee(2), i_*C) = 0.
\end{equation*}
Using~\eqref{eq.:staircase-complex-e6-p1} and the isomorphisms~\eqref{eq:ce1-iso} and~\eqref{eq:ce2-iso}, we deduce
\begin{equation*}
\Hom_X(\cE_2^\vee(-1), i_*C) = 0.
\end{equation*}
This proves that $i_*C \in \langle \cO_X(-1), \cE_1^\vee(-1) \rangle$.

Conversely, if $i_*C \in \langle \cO_X(-1), \cE_1^\vee(-1) \rangle \subset \cA_0(-1)$, then
using~\eqref{eq:cai} and the adjunction as above, we deduce that $C \in \cD^\perp$.

This shows that the assumptions of Theorem~\ref{theorem:samokhin} are satisfied for the category~$\cC = \cD^\perp$
and the exceptional pair $(\cO_X(-1), \cE_1^\vee(-1))$.
Therefore $\cD^\perp = 0$.
\end{proof}

Now we can also give a proof of the following more precise version of Theorem~\ref{theorem:residual-f4-intro}.
Recall the bundles $F_i$ defined by~\eqref{eq.:left-resolutions-of-Fi} or~\eqref{eq.:right-resolutions-of-Fi}.

\begin{theorem}
\label{theorem:f4-residual}
The residual category~$\bcR$ of~$\Db(Y)$ is generated by the exceptional pair of vector bundles $(i^*F_1(1),i^*F_2(2))$.
Moreover, there is an equivalence
\begin{equation*}
\bcR \cong \Db(\rA_2)
\end{equation*}
with the derived category of the quiver of Dynkin type~$\rA_2$,
such that the objects~$i^*F_1(1)[-1]$ and $i^*F_2(2)[-2]$ correspond to the simple modules,
and the induced polarization~$\bar\tau$ acts as the Auslander--Reiten translation composed with the shift by~$2$.
\end{theorem}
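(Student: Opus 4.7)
The plan is to derive this theorem as a direct application of the general machinery assembled in Section~\ref{section:lefschetz-residual-general}, using the two main inputs we already have: Theorem~\ref{theorem:e6-residual} (which gives a complete description of $\cR$ for the Cayley plane as a completely orthogonal exceptional triple with an explicit $\tau$-action) and Theorem~\ref{theorem:f4} (which says the restricted Lefschetz decomposition already generates $\Db(Y)$). These are precisely the two hypotheses of Theorem~\ref{theorem:restriction-general}, so we may apply it with $n = 3$ and completely orthogonal exceptional collection $(F_0, F_1(1), F_2(2))$. Hence $\bcR$ is equivalent to a product $\Db(\rA_{n_1-1}) \times \dots \times \Db(\rA_{n_r-1})$ for some partition $3 = \sum n_i$ with each $n_i \ge 2$; the only such partition is $r = 1$, $n_1 = 3$, giving $\bcR \cong \Db(\rA_2)$.

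Next I would extract the explicit description of the generators. Reading off the cycle structure of $\tau$ on $\cR$ from the isomorphisms
\[
\tau(F_0) \cong F_1(1)[1], \qquad \tau(F_1(1)) \cong F_2(2)[1], \qquad \tau(F_2(2)) \cong F_0[2]
\]
of Theorem~\ref{theorem:e6-residual}, one sees that the $\tau$-orbit is a single cycle of length~$3$, so in the notation of the proof of Theorem~\ref{theorem:restriction-general} we may take $S_1 = \{0,1,2\}$ with $s_{1,0}, s_{1,1}, s_{1,2}$ corresponding to $F_0, F_1(1), F_2(2)$. The normalization step used in that proof (shifting the $R_{s_{j,p}}$ so that $\bar\tau$ acts without shifts in the range $0 \le p < n_1 - 1$) amounts here to replacing $F_1(1)$ by $F_1(1)[1]$ and $F_2(2)$ by $F_2(2)[2]$, producing a residual shift $d_1 = 4$ after going once around the cycle. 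From formula~\eqref{eq:bcc-p} in the proof of Theorem~\ref{theorem:restriction-general}, $\bcR$ is generated by $i^*\tilde R_{s_{1,p}}$ for $1 \le p \le 2$; unwinding the shifts this is exactly the pair $(i^*F_1(1),\, i^*F_2(2))$.

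Finally, the identification with $\Db(\rA_2)$ together with the description of $\bar\tau$ follows from Remark~\ref{remark:simples}. Under that remark's equivalence, the simple module of the $p$-th vertex corresponds to $i^*\tilde R_{s_{1,p}}[-2p]$; substituting $\tilde R_{s_{1,1}} = F_1(1)[1]$ and $\tilde R_{s_{1,2}} = F_2(2)[2]$ yields that the simples correspond to $i^*F_1(1)[-1]$ and $i^*F_2(2)[-2]$, matching the theorem. The assertion that $\bar\tau$ acts as $\tau_{\mathrm{AR}} \circ [2]$ is exactly the content of the last display of Remark~\ref{remark:simples}. I do not anticipate a genuine obstacle in this argument, since the theorem is essentially an assembly result; the only point requiring care is the bookkeeping of shifts described in the second paragraph, needed to align the specific shifts $[-1]$ and $[-2]$ in the statement with the uniform convention $i^*R_{s_{j,p}}[-2p]$ used in Remark~\ref{remark:simples}.
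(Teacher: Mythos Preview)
Your proposal is correct and follows essentially the same route as the paper: apply Theorem~\ref{theorem:restriction-general} with the inputs Theorem~\ref{theorem:f4} and Theorem~\ref{theorem:e6-residual}, observe that the only partition of~$3$ into parts~$\ge 2$ is~$3$ itself, read off the generators and their shifts from the $\tau$-action~\eqref{eq:tau-e6}, and invoke Remark~\ref{remark:simples} for the identification of~$\bar\tau$. Your explicit bookkeeping of the shifts (normalizing to $\tilde R_{s_{1,p}} = F_p(p)[p]$ and then applying the $[-2p]$ convention) is a bit more detailed than the paper's own treatment, but the argument is the same.
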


\begin{proof}
We apply Theorem~\ref{theorem:restriction-general} to the Cayley plane~$X$ and its hyperplane section~$Y$.
By Theorem~\ref{theorem:f4} the restricted Lefschetz decomposition generates~$\Db(Y)$
and by Theorem~\ref{theorem:e6-residual} the residual category~$\cR$ of~$\Db(X)$
is generated by a completely orthogonal exceptional collection~$(F_0,F_1(1),F_2(2))$.
Therefore, the residual category~$\bcR$ of~$\Db(Y)$ is equivalent to a product of derived categories of Dynkin quivers of type~$\rA$.
It remains to understand the types of these quivers explicitly.

As the proof of Theorem~\ref{theorem:restriction-general} shows the types are encoded
in the action of the induced polarization~$\tau$ of~$\cR$ on the set of objects $(F_0,F_1(1),F_2(2))$.
Using~\eqref{eq:tau-e6} we conclude that $\bcR \cong \Db(\rA_2)$
and that the simple modules of the path algebra of the quiver~$\rA_2$ correspond to the objects~$i^*F_1(1)[-1]$ and~$i^*F_2(2)[-2]$.
The final claim follows from Remark~\ref{remark:simples}.
\end{proof}

\appendix

\section{Computations in weight lattices}
\label{appendix:weights}

\subsection{Restriction of weights}
\label{appendix:restrictions}

Consider the commutative diagram of simple algebraic groups
\begin{equation*}
\xymatrix@R=3ex{
% \dynkin[edge length = 2em,labels*={1,...,6}]E6
\rE_6 &&
% \ar@{=>}[rr] \ar[d] &&
% \dynkin[edge length = 2em,labels*={2,4,{3,5},{1,6}}]F4
\rF_4 \ar[ll]
% \ar[d]
\\
% \dynkin[edge length = 2em,labels*={6,5,4,3,2}]D5
\rD_5 \ar[u] &
% \ar[r] &
% \dynkin[edge length = 2em,labels*={5,4,3,2}]D4
\rD_4 \ar[l] &
% \ar@{=>}[r] &
% \dynkin[edge length = 2em,labels*={2,4,{3,5}}]B3
\rB_3. \ar[l] \ar[u]
}
\end{equation*}
Here the upper horizontal arrow is the embedding $\brG \hookrightarrow \rG$,
as the stabilizer of a general vector $v_0 \in V_\rG^{\omega_1}$;
the vertical arrows are the embeddings of the semisimple parts of the Levi groups~$\rL$ and~$\brL$
of the parabolic subgroups~$\rP_1 \subset \rG$ and~$\brP_4 \subset \brG$, respectively
(their Dynkin diagrams are obtained by removing the vertices~1 and~4 from~\eqref{diagram:e6} and~\eqref{diagram:f4}, respectively);
and the arrows in the bottom row are the standard embeddings $\Spin(7) \hookrightarrow \Spin(8) \hookrightarrow \Spin(10)$.

The morphisms on weight lattices induced by the above morphisms of groups can be described by the commutative diagram
\begin{equation}
\label{diagram:five-groups}
\vcenter{\xymatrix@R=3ex{
\dynkin[edge length = 2em,labels*={1,...,6}]E6
% \rE_6
\ar@{=>}[rr] \ar[d] &&
\dynkin[edge length = 2em,labels*={2,4,{3,5},{1,6}}]F4
% \rF_4
\ar[d]
\\
\dynkin[edge length = 2em, labels*={,2,...,6}]E{o*****}
% \dynkin[edge length = 2em,labels*={6,5,4,3,2}]D5
% \rD_5
\ar[r] &
\dynkin[edge length = 2em, labels*={,2,...,5,}]E{o****o}
% \dynkin[edge length = 2em,labels*={5,4,3,2}]D4
% \rD_4
\ar@{=>}[r] &
\dynkin[edge length = 2em,labels*={2,4,{3,5},}]F{***o}
% \dynkin[edge length = 2em,labels*={2,4,{3,5}}]B3
% \rB_3
}}
\end{equation}
where the labels $i_1,\dots,i_k$ on a vertex of a diagram mean that the fundamental weights $\omega_{i_1}$, \dots, $\omega_{i_k}$ of~$\rE_6$
go to the fundamental weight of this vertex;
moreover, if a label $i$ does not appear on a diagram, then the fundamental weight~$\omega_i$ of~$\rE_6$ goes to zero.
For example, the morphism from the weight lattice of $\rE_6$ to that of~$\rF_4$ is given by
\begin{align}
\label{eq:e6-f4}
(\omega_1,\omega_2,\omega_3,\omega_4,\omega_5,\omega_6)
&\mapsto
(\bom_4, \bom_1, \bom_3, \bom_2, \bom_3, \bom_4).
\end{align}
The double arrows in the diagram mean that the corresponding maps are foldings~\cite[\S3.6]{FoGo}.

We use the notations of~\S\ref{subsection:e6-vb} and~\S\ref{subsection:f4-vb} for representations and equivariant vector bundles.
Using diagram~\eqref{diagram:five-groups} it is easy to prove the following

\begin{lemma}
\label{lemma:restrictions}
We have $V_\rG^{\omega_1}\vert_{\brG} = V_{\brG}^{\bom_4} \oplus \Bbbk$.
Moreover, $\cU^{\omega_6}\vert_Y$ has a $\brG$-equivariant filtration with factors~$\cO_Y$, $\bcU^{\bom_3}(-1)$, $\cO_Y(1)$.
\end{lemma}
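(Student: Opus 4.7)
The plan is to derive both assertions from the branching rules encoded in diagram~\eqref{diagram:five-groups}.

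For the first assertion, I would produce two $\brG$-subrepresentations of $V_\rG^{\omega_1}\vert_{\brG}$ whose dimensions add to $27$. By the definition of $\brG$ as the stabilizer of $v_0 \in V_\rG^{\omega_1}$, the line $\Bbbk \cdot v_0$ is a trivial $\brG$-subrepresentation. Furthermore, by the weight-lattice map~\eqref{eq:e6-f4}, the $\rG$-weight $\omega_1$ restricts to the $\brG$-weight $\bom_4$; hence a $\rG$-highest weight vector of $V_\rG^{\omega_1}$ is automatically a $\brG$-highest weight vector of weight $\bom_4$ and generates a subrepresentation isomorphic to $V_{\brG}^{\bom_4}$. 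Since $\dim V_{\brG}^{\bom_4} = 26$, these two subrepresentations together exhaust $V_\rG^{\omega_1}\vert_{\brG}$, giving the claimed isomorphism.

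For the second assertion, I would interpret $\cU^{\omega_6}\vert_Y$ as the $\brG$-equivariant bundle on $Y = \brG/\brP_4$ associated with the restriction of the $10$-dimensional representation $V_\rL^{\omega_6}$ (the vector representation of the semisimple factor $\Spin(10) \subset \rL$ tensored with a central character) along the composition $\brP_4 \hookrightarrow \rP_1 \twoheadrightarrow \rL$. Following the chain $\Spin(7) \hookrightarrow \Spin(8) \hookrightarrow \Spin(10)$ in the bottom row of~\eqref{diagram:five-groups}, the vector representation of $\Spin(10)$ restricts to $\Spin(8)$ as vector-plus-two-trivials, and under the folding-type embedding $\Spin(7) \hookrightarrow \Spin(8)$ induced by the $\rB_3 \hookrightarrow \rD_4$ arrow, the vector representation of $\Spin(8)$ is converted into the $8$-dimensional spin representation of $\Spin(7)$. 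Consequently, $V_\rL^{\omega_6}\vert_{\brL}$ decomposes into three irreducibles of ranks $1$, $8$, $1$, with the middle factor isomorphic to $\bcU^{\bom_3}$ up to a central twist.

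To upgrade this decomposition into a $\brP_4$-equivariant filtration with the ordered list of graded pieces $\cO_Y,\ \bcU^{\bom_3}(-1),\ \cO_Y(1)$, I would use that the unipotent radical of $\brP_4$ has non-trivial image in $\rL$ (because $\brP_4$ is not contained in $\rL$), and that this image acts on $V_\rL^{\omega_6}\vert_{\brP_4}$ by mixing the three $\brL$-irreducible summands, producing a genuine filtration of length~$3$ whose associated graded is the $\brL$-decomposition above. The order of graded pieces is dictated by the grading on $V_\rL^{\omega_6}$ coming from the cocharacter defining $\brP_4 \subset \brG$, and the twists $\cO_Y(-1)$ and $\cO_Y(1)$ on the two rank-$1$ factors are pinned down by computing the central characters of $\brL$ on these summands, translated via~\eqref{eq:e6-f4} and using the identification $\cO_Y(1) = \bcU^{\bom_4}$.

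The main obstacle is careful weight bookkeeping across all three vertical arrows of~\eqref{diagram:five-groups}: namely, verifying that the folding embedding (rather than the standard one) converts the vector representation of $\Spin(8)$ into the spin representation of $\Spin(7)$, and matching the central characters on the rank-$1$ factors to produce precisely the twists $\cO_Y(-1)$ and $\cO_Y(1)$ in the correct positions of the filtration.
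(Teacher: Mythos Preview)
Your proposal is essentially correct, with one slip: in the final paragraph you write ``the twists $\cO_Y(-1)$ and $\cO_Y(1)$ on the two rank-$1$ factors'', but the correct twists are $\cO_Y$ and $\cO_Y(1)$, as you yourself state correctly two paragraphs earlier. The lowest $\rL$-weight $\omega_1 - \omega_6$ of $V_\rL^{\omega_6}$ maps under~\eqref{eq:e6-f4} to $\bom_4 - \bom_4 = 0$, not to $-\bom_4$.

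For the second assertion your route genuinely differs from the paper's. The paper simply writes down all ten $\rL$-weights of $V_\rL^{\omega_6}$, pushes them through~\eqref{eq:e6-f4}, and picks out the three that are $\rB_3$-dominant ($\bom_4$, $\bom_3 - \bom_4$, $0$); since the corresponding $\brL$-irreducibles have ranks $1$, $8$, $1$ summing to $10$, this gives the filtration factors together with their twists in one stroke. Your approach via the branching chain $\Spin(7) \hookrightarrow \Spin(8) \hookrightarrow \Spin(10)$ is more structural: it explains \emph{why} the spin representation of $\Spin(7)$ appears, namely because the $\rD_4 \to \rB_3$ arrow in~\eqref{diagram:five-groups} folds together the node carrying the $8$-dimensional factor with one of the other outer nodes, so that this factor restricts to the $8$-dimensional spin representation rather than to vector-plus-trivial. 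The cost is that the central characters (and hence the $\cO_Y$-twists) must be computed separately, which is precisely where your slip occurred; the paper's direct weight enumeration delivers the twists automatically. Neither proof spells out the justification for the filtration \emph{order}; your remark that it is dictated by the grading from the cocharacter defining $\brP_4$ is the right mechanism and is in fact slightly more explicit than what the paper says.
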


\begin{proof}
The weights in the $27$-dimensional representation $V = V_\rG^{\omega_1}$ are
\begin{equation*}
\omega_1,\ \omega_3 - \omega_1,\ \dots,\ \omega_3 - \omega_5,\ \dots,\ \omega_6 - \omega_5,\ -\omega_6
\end{equation*}
(see~\eqref{diagram:e6-omega1} for a more detailed picture).
By~\eqref{eq:e6-f4} the weights of its restriction to~$\brG$ are
\begin{equation*}
\bom_4, \bom_3 - \bom_4, \dots, 0, \dots, \bom_4 - \bom_3, -\bom_4.
\end{equation*}
In particular, the highest weight $\omega_1$ of $V$ goes to the weight~$\bom_4$,
which is hence the highest weight of the principal irreducible summand of~$V\vert_{\brG}$.
Since the dimension of this summand is $\dim(V_{\brG}^{\bom_4}) = 26$,
it follows that the other summand is 1-dimensional, hence trivial.
Its highest weight~$0$ is the image of the weight $\omega_3 - \omega_5$ in~$V$.

Similarly, the bundle $\cU^{\omega_6}$ corresponds to the representation~$V_{\rL}^{\omega_6}$ of the Levi group $\rL$
of Dynkin type~$\rD_5$
with the highest weight~$\omega_6$.
The diagram of~$\rL$-weights of this representation is
\begin{equation*}
\begin{tikzpicture}[xscale = 1, yscale = 1]
\node (-1,0) {} -- (11,0) {}; % stretches up the tikzpicture to make it the same size as the one below
% \fill[color = lightgray] (0,1) circle (.4);
% \fill[color = lightgray] (8,1) circle (.4);
\draw (0,1) node[left=.5em]{$\omega_6$} --
      (1,1) node[above=.5em]{$\omega_5 - \omega_6$} --
      (2,1) -- (3,1) -- (4,2) -- (5,1) -- (6,1) --
      (7,1) node[above=.5em]{$\omega_1 - \omega_5 + \omega_6$} --
      (8,1) node[right=.5em]{$\omega_1 - \omega_6$};
\draw (3,1) -- (4,0) -- (5,1);
\foreach \i in {0,1,2,3,5,6,7,8} \filldraw[black] (\i,1) circle (.2em);
\filldraw[black] (4,2) circle (.2em);
\filldraw[black] (4,0) circle (.2em);
\end{tikzpicture}
\end{equation*}
%
% The weights of this representation are
% \begin{multline*}
% \omega_6,\
% \omega_5 - \omega_6,\
% \omega_4 - \omega_5,\
% \omega_2 + \omega_3 - \omega_4,\
% \omega_3 - \omega_2,\
% \omega_1 + \omega_2 - \omega_3,\
% \\
% \omega_1 - \omega_2 - \omega_3 + \omega_4,\
% \omega_1 - \omega_4 + \omega_5,\
% \omega_1 - \omega_5 + \omega_6,\
% \omega_1 - \omega_6.
% \end{multline*}
By~\eqref{eq:e6-f4} the diagram of weights of its restriction to~$\brL$ is
\begin{equation*}
\begin{tikzpicture}[xscale = 1, yscale = 1]
\node (-1,0) {} -- (11,0) {}; % stretches up the tikzpicture to make it the same size as the one above
\fill[color = lightgray] (0,1) circle (.2);
\fill[color = lightgray] (8,1) circle (.2);
\fill [color = lightgray, rounded corners] (2, 1.2) -- (0.9, 1.2) -- (0.7, 1) -- (0.9, 0.8) --
      (2.8, 0.8) -- (3, 0.6) -- (3.8, -0.2) -- (4.2, -0.2) -- (5, 0.6) -- (5.2, 0.8) --
      (7.1, 0.8) -- (7.3, 1) -- (7.1, 1.2) --
      (5.2, 1.2) -- (5, 1.4) -- (4.2, 2.2) -- (3.8, 2.2) -- (3, 1.4) -- (2.8, 1.2) --
      (2, 1.2);
\draw (0,1) node[left=.5em]{$\bom_4$} --
      (1,1) node[above=.5em]{$\bom_3 - \bom_4$} --
      (2,1) -- (3,1) -- (4,2) -- (5,1) -- (6,1) --
      (7,1) node[above=.5em]{$2\bom_4 - \bom_3$} --
      (8,1) node[right=.5em]{$0$};
\draw (3,1) -- (4,0) -- (5,1);
\foreach \i in {0,1,2,3,5,6,7,8} \filldraw[black] (\i,1) circle (.2em);
\filldraw[black] (4,2) circle (.2em);
\filldraw[black] (4,0) circle (.2em);
\end{tikzpicture}
\end{equation*}%
%
% \begin{multline*}
% \bom_4,\
% \bom_3 - \bom_4,\
% \bom_2 - \bom_3,\
% \bom_1 + \bom_3 - \bom_2,\
% \bom_3 - \bom_1,\
% \bom_4 + \bom_1 - \bom_3,\
% \\
% \bom_4 - \bom_1 - \bom_3 + \bom_2,\
% \bom_4 - \bom_2 + \bom_3,\
% 2\bom_4 - \bom_3,\
% 0.
% \end{multline*}
The only~$\rB_3$-dominant (i.e., those with $\bom_1$, $\bom_2$, and $\bom_3$ appearing with non-negative coefficients)
weights in this list are $\bom_4$, $\bom_3 - \bom_4$, and~$0$;
the corresponding representations of~$\rB_3$, marked in gray,
have dimensions~$1$, $8$, and~$1$, that sum up to~$10$, the rank of~$\cU^{\omega_6}$.
This means that the corresponding vector bundles $\bcU^{\bom_4} = \cO_Y(1)$,
$\bcU^{\bom_3 - \bom_4} = \bcU^{\bom_3}(-1)$, and $\bcU^0 = \cO_Y$
are the factors of a $\brG$-equivariant filtration of~$\cU^{\omega_6}\vert_Y$.
\end{proof}

A similar argument proves the following

\begin{lemma}
\label{lemma:restrictions-v}
The trivial vector bundle $V\otimes \cO_Y$ has a $\brG$-equivariant filtration
with factors~$\cO_Y(-1)$, $\bcU^{\bom_3}(-2)$, $\cO_Y$, $\cO_Y$, $\bcU^{\bom_1}(-1)$, $\bcU^{\bom_3}(-1)$, $\cO_Y(1)$.
\end{lemma}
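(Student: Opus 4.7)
The plan is to imitate the proof of Lemma~\ref{lemma:restrictions}, now decomposing the 27-dimensional $\rG$-representation $V = V_\rG^{\omega_1}$ into irreducible $\brL$-constituents via the weight-restriction map displayed in diagram~\eqref{diagram:five-groups}.

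First, by Lemma~\ref{lemma:restrictions} we have a $\brG$-equivariant splitting $V\vert_{\brG} \cong V_{\brG}^{\bom_4} \oplus \Bbbk$, which yields a $\brG$-equivariant decomposition
\begin{equation*}
V \otimes \cO_Y \cong (V_{\brG}^{\bom_4} \otimes \cO_Y) \oplus \cO_Y.
\end{equation*}
The trivial summand already accounts for one of the two $\cO_Y$ factors in the claimed filtration, so it remains to produce a six-step $\brP_4$-equivariant filtration of the fiber $V_{\brG}^{\bom_4}$ whose graded pieces are the irreducible $\brL$-representations of dimensions $1, 8, 1, 7, 8, 1$ and of the claimed highest weights.

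The computation proceeds exactly as in Lemma~\ref{lemma:restrictions}: I would list the 27 weights of $V$ in the $\rL$-weight lattice of type~$\rD_5$, restrict each to the $\brL$-weight lattice of type~$\rB_3$ via the vertical and bottom horizontal arrows of~\eqref{diagram:five-groups}, and then group the resulting multiset into $\rB_3$-Weyl orbits. Each orbit contributes one irreducible $\brL$-factor, whose highest weight is the $\rB_3$-dominant representative of the orbit and whose dimension is given by the Weyl formula. The dominant weights appearing should be $-\bom_4$, $\bom_3 - 2\bom_4$, $0$ (with multiplicity two), $\bom_1 - \bom_4$, $\bom_3 - \bom_4$, $\bom_4$, with corresponding $\brL$-dimensions $1, 8, 1, 1, 7, 8, 1$ summing to $27$, and they match precisely the claimed factors $\cO_Y(-1)$, $\bcU^{\bom_3}(-2)$, $\cO_Y$, $\cO_Y$, $\bcU^{\bom_1}(-1)$, $\bcU^{\bom_3}(-1)$, $\cO_Y(1)$.

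The only real obstacle is the combinatorial bookkeeping: enumerating the 27 $\rD_5$-weights of $V$, applying the restriction map to the $\rB_3$-weight lattice, and correctly tallying the $\rB_3$-Weyl orbits without missing or double-counting any. Useful sanity checks are that the total dimension equals $27$, that the highest weight $\bom_4$ of $V\vert_{\brG}$ occurs with multiplicity one (giving the top factor $\cO_Y(1)$), and that the trivial weight $0$ appears twice as expected, once from the trivial summand $\Bbbk \subset V\vert_{\brG}$ and once from the zero weight of the $26$-dimensional fundamental representation $V_{\brG}^{\bom_4}$ of~$\rF_4$.
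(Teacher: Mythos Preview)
Your approach is essentially the paper's: restrict the 27 weights of $V$ to the $\rF_4$ weight lattice via the folding map, pick out the $\rB_3$-dominant ones, and match them with irreducible $\brL$-factors whose dimensions sum to~27. The preliminary step of first splitting off the trivial $\brG$-summand is harmless but not needed.

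There is, however, a genuine slip in both your methodology and your sanity check. The claim that ``each $\rB_3$-Weyl orbit contributes one irreducible $\brL$-factor'' is false: the 7-dimensional standard representation of~$\rB_3$ has its six nonzero weights in one Weyl orbit and the zero weight in a separate orbit, yet it is irreducible. Relatedly, your check on the zero weight is wrong: the 26-dimensional representation $V_{\brG}^{\bom_4}$ has zero weight with multiplicity \emph{two}, not one (its nonzero weights are the 24 short roots), so the zero weight appears with multiplicity \emph{three} in $V\vert_{\brG}$. Of these three zeros, only two are highest weights of factors (the two copies of~$\cO_Y$); the third is the $\rB_3$-dominant but non-highest zero weight of the factor~$\bcU^{\bom_1}(-1)$ (whose weights are $\pm e_2,\pm e_3,\pm e_4,0$). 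So simply listing the $\rB_3$-dominant restricted weights and declaring each a highest weight overcounts; the honest procedure is to peel off irreducibles from the top and subtract their full characters, or equivalently to check that the proposed highest weights give irreducibles whose \emph{characters}, not just dimensions, sum to that of~$V\vert_{\brL}$. The dimension check $1+8+1+1+7+8+1=27$ is necessary but not by itself sufficient.
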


\begin{proof}
The next picture shows the diagram of $\rG$-weights of $V = V_\rG^{\omega_1}$:
\begin{equation}
\label{diagram:e6-omega1}
{\begin{tikzpicture}[xscale = .5, yscale = .5, baseline=(current bounding box.center)]
\fill [color = lightgray, rounded corners] (4,.7) -- (.7,.7) -- (.7,1.3) -- (3.7,4.3) -- (4.3,4.3) -- (7.3,1.3) -- (7.3,.7) -- (4,.7);
\fill [color = lightgray, rounded corners] (12,.7) -- (8.7,.7) -- (8.7,1.3) -- (11.7,4.3) -- (12.3,4.3) -- (15.3,1.3) -- (15.3,.7) -- (12,.7);
\fill [color = lightgray, rounded corners] (6.3,4.3) -- (5.3,5.3) -- (4.7,5.3) -- (4.7,4.7) -- (7.7,1.7)-- (8.3,1.7) -- (11.3,4.7) -- (11.3,5.3) --
			(10.7,5.3) -- (8.3,2.9) -- (7.7,2.9) -- (6.3,4.3);
\fill[color = lightgray] (0,0) circle (.4);
\fill[color = lightgray] (8,0) circle (.4);
\fill[color = lightgray] (8,4) circle (.4);
\fill[color = lightgray] (16,0) circle (.4);
\draw (0,0) node[left=.5em]{$\omega_1$} --
      (1,1) node[left=.5em]{$\omega_3 - \omega_1$} --
      (2,2) --
      (3,3) --
      (4,4) --
      (5,5) node[left=.5em]{$\omega_2 - \omega_6$} --
      (6,4) --
      (7,3) --
      (8,4) node[above=1ex]{$\omega_3 - \omega_5$} --
      (9,3) --
      (10,4) --
      (11,5) --
      (12,4) --
      (13,3) --
      (14,2) --
      (15,1) --
      (16,0) node[right=.5em]{$-\omega_6$};
\draw (3,3) -- (4,2) -- (5,3) -- (6,4);
\draw (4,4) -- (5,3) -- (6,2) -- (7,3) -- (8,2) -- (9,3) -- (10,2) -- (11,3) -- (12,4);
\draw (10,4) -- (11,3) -- (12,2) -- (13,3);
\draw (6,2) -- (7,1) -- (8,2) -- (9,1) -- (10,2);
\draw (7,1) --
      (8,0) node[below = 1ex]{$\omega_6 - \omega_1$} --
      (9,1) node[right = .5em]{$\omega_5 - \omega_1 - \omega_6$};
\foreach \i in {0,1,2,3,4,5} \filldraw[black] (\i,\i) circle (.2em);
\foreach \i in {0,1,2} \filldraw[black] (\i + 4,\i + 2) circle (.2em);
\foreach \i in {0,1,2} \foreach \j in {0,1,2} \filldraw[black] (\i + \j + 6,\i - \j + 2) circle (.2em);
\foreach \i in {0,1,2} \filldraw[black] (12 - \i,2 + \i) circle (.2em);
\foreach \i in {0,1,2,3,4,5} \filldraw[black] (16 - \i,\i) circle (.2em);
\end{tikzpicture}}
\end{equation}
The labeled weights are the only $\rG$-weights of $V$ that project to~$\rB_3$-dominant weights of~$\brG$.
By~\eqref{eq:e6-f4} the resulting weights of~$\brG$ are
% \begin{equation*}
% \omega_1,\
% \omega_3 - \omega_1,\
% \omega_2 - \omega_6,\
% \omega_6 - \omega_1,\
% \omega_5 - \omega_1 - \omega_6,\
% -\omega_6
% \end{equation*}
% They project to the weights
\begin{equation*}
\bom_4,\
\bom_3 - \bom_4,\
\bom_1 - \bom_4,\
0,\
0,\
\bom_3 - 2\bom_4,\
-\bom_4,
\end{equation*}
and these provide the required factors of the $\brG$-equivariant filtration, marked with gray.
\end{proof}

\subsection{Symmetric and exterior squares of representations}
\label{appendix:squares}

We need the following

\begin{lemma}
\label{lemma:squares}
We have isomorphisms
\begin{equation*}
\Sym^2\bcU^{\bom_3} \cong \bcU^{2\bom_3} \oplus \cO_Y(3)
\qquad\text{and}\qquad
\wedge^2\bcU^{\bom_3} = \bcU^{\bom_2}(1) \oplus \bcU^{\bom_1}(2).
\end{equation*}
\end{lemma}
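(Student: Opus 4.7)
The plan is to reduce the lemma, via the monoidality of the associated-bundle construction $\lambda\mapsto\bcU^\lambda$, to a decomposition of the irreducible $\brL$-representation $V^{\bom_3}_{\brL}$ into $\brL$-irreducibles after taking $\Sym^2$ and $\wedge^2$. Removing the fourth vertex of~\eqref{diagram:f4} exhibits the semisimple part of $\brL$ as $\Spin(7)$ of Dynkin type $\rB_3$, and the center of $\brL$ is a one-dimensional torus whose character group is generated by $\bom_4$. Therefore an irreducible $\brL$-representation is determined by its $\Spin(7)$-highest weight together with its central weight (a rational multiple of $\bom_4$), and it suffices to describe both pieces of data for each summand on the left-hand sides.

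First I would identify $V^{\bom_3}_{\brL}|_{\Spin(7)}$: pairing $\bom_3$ with $\alpha_1^\vee,\alpha_2^\vee,\alpha_3^\vee$ yields $(0,0,1)$, so this restriction is the $8$-dimensional spin representation. Since the spin representation of $\Spin(7)$ carries an invariant symmetric bilinear form (coming from the standard inclusion $\Spin(7)\hookrightarrow\Spin(8)$ and $\Spin(8)$-triality), its symmetric square contains the trivial summand; as $\dim V^{2\bom_3}_{\Spin(7)}=35$ by Weyl, one gets $\Sym^2\cong V^{2\bom_3}_{\Spin(7)}\oplus\Bbbk$. Dually, the two top weights of the spin representation sum to the $\rB_3$-weight $\bom_2$, giving a summand $V^{\bom_2}_{\Spin(7)}$ of dimension~$21$ in $\wedge^2$, and the complement has dimension $28-21=7$, forcing it to be the vector representation $V^{\bom_1}_{\Spin(7)}$.

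It remains to track the central character. Using the standard expressions of $\rF_4$-fundamental weights in terms of simple roots (e.g.\ $\bom_3=2\alpha_1+4\alpha_2+6\alpha_3+3\alpha_4$ and $\bom_4=\alpha_1+2\alpha_2+3\alpha_3+2\alpha_4$) and matching coefficients of~$\alpha_4$, one finds
\[
\bom_1=\bom_1^{\rB_3}+\bom_4,\qquad \bom_2=\bom_2^{\rB_3}+2\bom_4,\qquad \bom_3=\bom_3^{\rB_3}+\tfrac{3}{2}\bom_4,
\]
where $\bom_i^{\rB_3}$ denotes the $i$-th fundamental weight of the semisimple Levi. In particular the central weight of $V^{\bom_3}_{\brL}$ is $\tfrac{3}{2}\bom_4$, so every summand of $\Sym^2 V^{\bom_3}_{\brL}$ and of $\wedge^2 V^{\bom_3}_{\brL}$ has central weight $3\bom_4$. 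Combining this with the $\Spin(7)$-decompositions above produces $\bcU^{2\bom_3}\oplus\cO_Y(3)$ and $\bcU^{\bom_2}(1)\oplus\bcU^{\bom_1}(2)$ on the nose, proving the lemma. The only mildly subtle point is the appearance of the half-integer $\tfrac{3}{2}$ in the decomposition of $\bom_3$---the spin representation is not a representation of the adjoint form of $\rB_3$---but this ambiguity disappears after taking $\Sym^2$ or $\wedge^2$, where the central weight becomes integral and the matching of $\brL$-highest weights on both sides is immediate.
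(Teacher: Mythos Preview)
Your argument is correct. The only step that could use one more word is the claim that the $7$-dimensional complement in $\wedge^2$ of the spin representation is forced to be $V^{\bom_1}_{\Spin(7)}$: a priori it could be $7$ copies of the trivial representation, but this is excluded since the spin representation of $\Spin(7)$ is orthogonal and hence carries no invariant alternating form. With that remark the proof is complete.

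Your route differs from the paper's. The paper works entirely inside the $\brL$-weight lattice: it lists the eight weights of $V^{\bom_3}_{\brL}$ explicitly, forms the pairwise sums $\bom_3+\mu$, picks out the $\rB_3$-dominant ones ($2\bom_3$, $\bom_2+\bom_4$, $\bom_1+2\bom_4$, $3\bom_4$), checks that the corresponding dimensions $35+21+7+1$ add up to $64$, and then separates $\Sym^2$ from $\wedge^2$ by a final rank count. You instead factor the problem through the decomposition $\brL \sim \Spin(7)\times Z(\brL)^\circ$: you recognise $V^{\bom_3}_{\brL}|_{\Spin(7)}$ as the $8$-dimensional spin representation, invoke the standard decompositions of its symmetric and exterior squares, and then recover the $\bom_4$-twists by computing the central character via the $\alpha_4$-coefficients of the $\rF_4$ fundamental weights. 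The paper's method is more self-contained (no outside facts about $\Spin(7)$ are needed), while yours is more structural and explains \emph{why} the particular twists $\cO_Y(3)$, $(1)$, $(2)$ appear: they all come from the single central weight $3\bom_4$ on the square, redistributed according to $\bom_i=\bom_i^{\rB_3}+c_i\bom_4$.
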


\begin{proof}
The weights of the representation of~$\brL$ with the highest weight~$\bom_3$ are
\begin{multline*}
\bom_3,\
\bom_2 - \bom_3 + \bom_4,\
\bom_1 - \bom_2 + \bom_3 + \bom_4,\
-\bom_1 + \bom_3 + \bom_4,\\
\bom_1 - \bom_3 + 2\bom_4,\
-\bom_1 + \bom_2 - \bom_3 + 2\bom_4,\
-\bom_2 + \bom_3 + 2\bom_4,\
-\bom_3 + 3\bom_4.
\end{multline*}
The weights of the tensor product $\bcU^{\bom_3} \otimes \bcU^{\bom_3}$ are pairwise sums of the above weights;
potential highest weights of the irreducible summands among these are the sums of the highest weight~$\bom_3$ and another weight.
The only~$\rB_3$-dominant weights among these are the weights
\begin{equation*}
2\bom_3,\ \bom_2 + \bom_4,\ \bom_1 + 2\bom_4,\ 3\bom_4.
\end{equation*}
The corresponding vector bundles $\bcU^{2\bom_3}$, $\bcU^{\bom_2}(1)$, $\bcU^{\bom_1}(2)$, $\cO(3)$
have ranks $35$, $21$, $7$, and~$1$, respectively; they sum up to~$64$, the rank of~$\bcU^{\bom_3} \otimes \bcU^{\bom_3}$, hence
\begin{equation*}
\bcU^{\bom_3} \otimes \bcU^{\bom_3} \cong \bcU^{2\bom_3} \oplus \bcU^{\bom_2}(1) \oplus \bcU^{\bom_1}(2) \oplus \cO(3).
\end{equation*}
The only way to cook up the rank-36 and the rank-28 summands $\Sym^2\bcU^{\bom_3}$ and $\wedge^2\bcU^{\bom_3}$ out of these four
gives the lemma.
\end{proof}

A similar argument proves

\begin{lemma}
\label{lemma:squares-2}
We have an isomorphism
\begin{equation*}
\wedge^2\bcU^{\bom_1} = \bcU^{\bom_2}.
\end{equation*}
\end{lemma}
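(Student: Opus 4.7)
The plan is to mirror the weight calculation carried out in Lemma~\ref{lemma:squares}. The bundle $\bcU^{\bom_1}$ is associated with the $7$-dimensional standard representation of the $\rB_3$-part of the Levi $\brL$. First, I would write down its $7$ weights in the basis $(\bom_1,\bom_2,\bom_3,\bom_4)$; these are obtained from the highest weight $\bom_1$ by subtracting non-negative integer combinations of the simple roots $\alpha_1,\alpha_2,\alpha_3$ of the $\rB_3$-part of $\brL$, expressed in terms of the fundamental weights of $\rF_4$ via the Cartan matrix.

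Next I would enumerate the candidate highest weights of the irreducible summands of $\bcU^{\bom_1}\otimes\bcU^{\bom_1}$ by adding $\bom_1$ to each of the $7$ weights and keeping only those that are $\brL$-dominant, i.e.\ with non-negative coefficients of $\bom_1,\bom_2,\bom_3$. This leaves the four candidates $2\bom_1$, $\bom_2$, $\bom_1+\bom_4$ and $2\bom_4$, whose corresponding $\brL$-representations have ranks $27$, $21$, $7$ and $1$. Since the tensor square has rank $49$, exactly one of the candidates must be absent, and a rank count forces
\[
\bcU^{\bom_1}\otimes\bcU^{\bom_1}\cong\bcU^{2\bom_1}\oplus\bcU^{\bom_2}\oplus\cO_Y(2).
\]
Splitting this into symmetric (rank $28$) and alternating (rank $21$) parts then yields $\Sym^2\bcU^{\bom_1}\cong\bcU^{2\bom_1}\oplus\cO_Y(2)$ and $\wedge^2\bcU^{\bom_1}\cong\bcU^{\bom_2}$, which is the claim.

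I do not anticipate any serious obstacle: the computation is entirely routine once the weights have been written down. The only point requiring care is that the fundamental weights $\bom_1,\bom_2,\bom_3$ of $\rF_4$ are not purely $\rB_3$-weights, so the weights of $V_\brL^{\bom_1}$ acquire $\bom_4$ contributions that must be tracked carefully both when listing weights and when sieving for dominance; after that, the dimension count does all the work.
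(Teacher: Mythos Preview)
Your plan is correct and is exactly what the paper intends: the paper gives no separate proof of this lemma, only the phrase ``A similar argument proves,'' pointing back to the weight-listing and rank-counting method of Lemma~\ref{lemma:squares}, which is precisely what you carry out. The one small difference from Lemma~\ref{lemma:squares} is that here the four candidate dimensions sum to~$56>49$ rather than matching on the nose, so one candidate must be discarded; since all weight multiplicities in the $7$-dimensional representation are~$1$ (so each candidate appears with multiplicity at most~$1$), the equation $27n_1+21n_2+7n_3+n_4=49$ with $n_i\in\{0,1\}$ and $n_1=1$ (Cartan component) indeed forces $n_3=0$, exactly as you claim.
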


\end{document}